\newtheoremstyle{standard}
 {16pt}  
 {16pt}  
 {}  
 {}  
 {\bfseries}
 {}  
 { } 
 {{\thmname{#1~}}{\thmnumber{#2.}}\thmnote{~(#3)}} 
\newtheoremstyle{kursiv}
 {16pt}  
 {16pt}  
 {\itshape}  
 {}  
 {\bfseries}
 {}  
 { } 
 {{\thmname{#1~}}{\thmnumber{#2.}}\thmnote{~(#3)}} 
\theoremstyle{standard}
\newtheorem{defn} [subsection]{Definition}
\newtheorem{rem}   [subsection]{Remark}
\newtheorem{nota}   [subsection]{Notation}
\newtheorem{setup} [subsection]{}
\theoremstyle{definition}
\theoremstyle{kursiv}
\newtheorem{thm}[subsection]{Theorem}
\newtheorem{prop} [subsection]{Proposition}
\newtheorem{lem} [subsection]{Lemma}
\renewcommand{\Re}{\mathrm{Re}}
\renewcommand{\Im}{\mathrm{Im}}
\newcommand{\Evol}{\mathrm{Evol}}
\newcommand{\im}{\mathrm{im}}
\newcommand{\evol}{\mathrm{evol}}
\newcommand{\ev}{\mathrm{ev}}
\newcommand{\id}{\mathrm{id}}
\newcommand{\N}{\mathbb{N}}
\newcommand{\R}{\mathbb{R}}
\newcommand{\K}{\mathbb{K}}
\newcommand{\C}{\mathbb{C}}
\newcommand{\SSS}{\mathbb{S}}
\newcommand{\SOne}{\SSS^1}
\newcommand{\V}{\mathcal{V}}
\newcommand{\Lf}{\ensuremath{\mathbf{L}}}
\renewcommand{\epsilon}{\varepsilon}
\newcommand{\vectw}[1]{\mathfrak{X}^\omega \left( #1 \right)}
\newcommand{\Fl}{Fl}
\newcommand{\set}[1]{\{  #1 \}}
\newcommand{\setm}[2]{\left\{\, #1 \middle\vert #2\,\right\}}
\newcommand{\norm}[1]{\left\lVert #1 \right\rVert}
\newcommand{\supnorm}[1]{\norm{#1}_\infty}
\newcommand{\abs}[1]{\left| #1 \right|}
\newcommand{\coloneq}{\colonequals}
\DeclareMathOperator{\res}{res}
\DeclareMathOperator{\pr}{pr}
\DeclareMathOperator*{\Diff}{Diff}
\DeclareMathOperator*{\Diffw}{\Diff\nolimits^{\omega}}
\DeclareMathOperator{\dom}{dom}
\newcommand{\BHol}{\Hol_{\mathrm{b}}}
\newcommand{\Holb}{\BHol}
\newcommand{\Hol}{\mathrm{Hol}}
\newcommand{\A}{\Sigma}
\DeclareSymbolFont{bbold}{U}{bbold}{m}{n}
\DeclareSymbolFontAlphabet{\mathbbold}{bbold}
\newcommand{\nN}{\ensuremath{\mathcal{N}}}
\newcommand{\oO}{\ensuremath{\mathcal{O}}}
\newcommand{\func}[5]{#1 \colon #2 \rightarrow #3 , #4 \mapsto #5}
\newcommand{\smfunc}[3]{#1 \colon #2 \rightarrow #3}
\newcommand{\nnfunc}[4]{#1 \rightarrow #2 : #3 \mapsto #4}
\newcommand{\smset}[1]{ \left\{ #1 \right\} }
\newcommand{\oBallin}[3]{B_{#1}^{#2}\!\!\left(#3 \right)}
\newcommand{\cBallin}[3]{\overline{B}_{#1}^{#2}\left(#3 \right)}
\title{The Lie group of real analytic diffeomorphisms is not real analytic} 
\author{R. Dahmen\footnote{Technische Universität Darmstadt, Germany. 
\href{mailto:dahmen@mathematik.tu-darmstadt.de}{dahmen@mathematik.tu-darmstadt.de}}%
\ \  and A. Schmeding\footnote{NTNU Trondheim, Norway
\href{mailto:alexander.schmeding@math.ntnu.no}{alexander.schmeding@math.ntnu.no}
}%
}
\begin{document}

\maketitle

\begin{abstract}
We construct an infinite dimensional real analytic manifold structure for the space of real analytic mappings from a compact manifold to a locally convex manifold.
Here a map is real analytic if it extends to a holomorphic map on some neighbourhood of the complexification of its domain. 
As is well known the construction turns the group of real analytic diffeomorphisms into a smooth locally convex Lie group. 
We prove then that the diffeomorphism group is regular in the sense of Milnor.

In the inequivalent ``convenient setting of calculus'' the real analytic diffeomorphisms even form a real analytic Lie group. 
However, we prove that the Lie group structure on the group of real analytic diffeomorphisms is in general \emph{not} real analytic in our sense.
\end{abstract}

\medskip

\textbf{Keywords:} real analytic, manifold of mappings, infinite-dimensional Lie group, regular Lie group,
diffeomorphism group, Silva space

\medskip

\textbf{MSC2010:} 58D15 (primary); 58D05, 22E65, 58B10, 26E05 (secondary)

\tableofcontents

\section*{Introduction and statement of results}
A classical result by Leslie states that the group of analytic diffeomorphisms of a compact analytic manifold is a smooth (infinite dimensional) Lie group (see \cite{Leslie}). 
Unfortunately, the proof given in in the paper \cite{Leslie} contains a gap, as is pointed out in \cite{KM90}. 
However, in the ``convenient setting of analysis'' it is possible to achieve an even stronger result (cf.\ \cite{KM90,KM97}): 
\emph{The group of real analytic diffeomorphisms of a compact real analytic manifold is a regular real analytic (in the sense of convenient analysis) infinite dimensional Lie group}. 

In both approaches, the Lie group of real analytic diffeomorphisms has been modelled on Silva spaces. 
In the context of Silva spaces the concept of $C^r$-maps between locally convex spaces known as Keller's $C^r_c$-theory (see \cite{hg2002a} for a streamlined exposition) which is adopted in \cite{Leslie} and the ``convenient setting of analysis'' are equivalent. 
Hence, the result in \cite{KM90} subsumes the earlier (but flawed) result in \cite{Leslie}. \bigskip

In the present paper we consider again the Lie group of real analytic diffeomorphisms on a compact real analytic manifold.
The concept of real analyticity adopted in this work is based on an idea of Milnor. 
A map $f \colon E\supseteq U \rightarrow F$ between real locally convex spaces is called \emph{real analytic} if it extends to a holomorphic map (i.e.\ a Keller $C^\infty_\C$-map)
$\tilde{U} \rightarrow F_\C$ on an open neighbourhood $\tilde{U}$ of $U$ in the complexification $E_\C$ of $E$.

This notion of real analytic maps was introduced in \cite{hg2002a} and diverges from the real analytic maps in the convenient setting (see \cite{KM90,KM97}). 
It is known that both concepts coincide in the context of Fr\'{e}chet spaces. 
However, for Silva spaces real analyticity in our sense is stronger than ``convenient'' real analyticity.\bigskip

In the first part of the present paper, we construct a real analytic manifold structure on the set of real analytic mappings $C^\omega_\R (M,N)$. 
The line of thought follows the well known construction for smooth structures on spaces of (real) analytic mappings (cf.\ \cite{KM90}). 
We obtain the following result.\bigskip

\textbf{Theorem A} \emph{Let $M,N$ be real analytic manifolds.
Assume that $M$ is compact and that $N$ admits a local addition.
Then $C^\omega_\R (M,N)$ is a real analytic manifold.
The manifold structure does not depend on the choice of local addition.}\bigskip

Here the real analytic structure means real analyticity in the stronger sense explained above. 
To spell it out explicitly, this results is not surprising in any respect. 
In the inequivalent ``convenient setting of real analytic maps'' a version of Theorem A using the notion of ``convenient real analytic'' was already known. 
However, it is not possible to adapt the arguments establishing real analyticity to our setting, as the manifold is 
no longer modelled on a Silva space.
Moreover, the stronger notion of real analyticity we adopt in this paper requires special care if one wants to deal with spaces $C^\omega_\C (M,N)$ where $N$ is an infinite dimensional locally convex manifold.
To obtain Theorem A for infinite dimensional manifolds, we need to consider the complexification for certain types of infinite-dimensional vector bundles.
For finite dimensional vector bundles the concrete constructions seem to be part of the folklore (see for example \cite[7.1]{KM90}, an infinite dimensional analogue is now recorded in \cite{DGS2014}).

The manifold structure on $C^\omega_\R (M,N)$ allows us to adapt the well known construction in \cite[43.]{KM97} to construct the Lie group $\Diff^\omega (M)$. 
\bigskip

\textbf{Theorem (Kriegl/Michor 1990)} \emph{For a compact real analytic manifold $M$ the smooth structure of $C^\omega_\R (M,M)$ turns the group $\Diff^\omega (M)$ of real analytic diffeomorphisms into a Lie group. 
It is even a real analytic Lie group in the convenient sense.}\bigskip

Furthermore, in \cite[Theorem 43.3]{KM97} it has been shown that the the Lie group $\Diff^\omega (M)$ is a regular Lie group in the convenient setting.
To put this result into perspective, recall the notion of regularity for Lie groups: 

Let $G$ be a Lie group modelled on a locally convex space, with identity element $\mathbf{1}$, and
 $r\in \N_0\cup\{\infty\}$. We use the tangent map of the left translation
 $\lambda_g\colon G\to G$, $x\mapsto xg$ by $g\in G$ to define
 $v.g\coloneq T_{\mathbf{1}} \lambda_g(v) \in T_g G$ for $v\in T_{\mathbf{1}} (G) =: \Lf(G)$.
 Following \cite{dahmen2011} and \cite{1208.0715v3}, $G$ is called
 \emph{$C^r$-regular} if for each $C^r$-curve
 $\gamma\colon [0,1]\rightarrow \Lf(G)$ the initial value problem
 \begin{displaymath}
  \begin{cases}
   \eta'(t)&= \eta (t). \gamma(t)\\ \eta(0) &= \mathbf{1}
  \end{cases}
 \end{displaymath}
 has a (necessarily unique) $C^{r+1}$-solution
 $\Evol (\gamma)\coloneq\eta\colon [0,1]\rightarrow G$, and the map
 \begin{displaymath}
  \evol \colon C^r([0,1],\Lf(G))\rightarrow G,\quad \gamma\mapsto \Evol
  (\gamma)(1)
 \end{displaymath}
 is smooth. If $G$ is $C^r$-regular and $r\leq s$, then $G$ is also
 $C^s$-regular. A $C^\infty$-regular Lie group $G$ is called \emph{regular}
 \emph{(in the sense of Milnor}).
 Every finite dimensional Lie group is $C^0$-regular. Several
 important results in infinite-dimensional Lie theory are only available for
 regular Lie groups (see
 \cite{1208.0715v3}, cf.\ also \cite{KM97} and the references therein).\medskip

 In the present situation the model space of the Lie group $\Diff^\omega (M)$ is the Silva space $\vectw{M}$. 
 Thus our notion of differentiability is inequivalent to the notion of convenient differentiability on the space $C^\infty ([0,1],\vectw{M})$ and the result of Kriegl and Michor does not imply that the Lie group $\Diff^\omega (M)$ is regular in our setting. 
 However, Gl{\"o}ckner's regularity theorem for Silva Lie groups enables us to prove the regularity of $\Diff^\omega (M)$: 
 \bigskip

 \textbf{Theorem B} \emph{Let $M$ be a compact real analytic manifold. Then the Lie group $\Diff^\omega (M)$ is $C^1$-regular.}
 \bigskip

To say it once more, real analyticity in the sense adopted in this paper differs from ``convenient real analyticity''. 
In particular, the construction of the Lie group $\Diff^\omega (M)$ does not provide a real analytic Lie group structure in our setting albeit $\Diff^\omega (M)$ is a convenient real analytic Lie group. 
As the ambient manifold $C^\omega_\R (M,M)$ is a real analytic manifold in our sense the same is true for $\Diff^\omega (M)$.
Hence one would suspect that this structure turns $\Diff^\omega (M)$ into a real analytic Lie group.
In the last part of this paper we investigate this question for the group of analytic diffeomorphisms on the circle $\SOne$.
Contrary to the treatment in the ``convenient setting of analysis'', we obtain the following surprising result. \bigskip

\textbf{Theorem C} \emph{Let $\SOne$ be the unit circle in $\R^2$ with the canonical real analytic manifold structure. 
Then the group multiplication of the Lie group $\Diff^\omega (\SOne)$ is not real analytic.}  \bigskip

Thus $\Diff^\omega (\SOne)$ is a convenient real analytic Lie group (see \cite{KM90}) but not a real analytic Lie group in our sense.
The reason for this surprising behaviour is buried in the construction of the model space $\vectw{\SOne}$ of $\Diff^\omega (\SOne)$ and its complexification.
Both are Silva spaces, i.e.\ well behaved inductive limits of Banach spaces. 
Note that with some care, one can extend the group multiplication of $\Diff^\omega (M)$ to open zero-neighbourhoods of every individual steps of the inductive limit. 
However, we cannot choose an ascending sequence of zero-neighbourhoods leading to a zero-neighbourhood in the limit on which the multiplication is defined and complex analytic.

The counterexample in the second part of the paper is tailored to the manifold $\SOne$.
Hence it just indicates that $\Diff^\omega (M)$ will in general not be a real analytic Lie group in our sense.
Nevertheless, the construction of the counterexample should carry over to the general setting.
The authors believe that a similar analysis will show that for an arbitrary compact real analytic manifold $M$ (except the zero-dimensional ones) the group $\Diff^\omega (M)$ is not a real analytic Lie group.

\section{The locally convex manifold structure for spaces of analytic maps}\label{sect: mfd}

In this section we recall the construction of the manifold structure on spaces of analytic functions. 
The basic idea is not new and follows the exposition of the construction in the ``convenient setting'' (see \cite{KM90}). 
Beyond the Fr\'{e}chet setting our notion of real analytic maps is inequivalent to the notion in the convenient setting of global analysis. 
Thus the arguments establishing analyticity in our sense are new and require the complexification of several (infinite-dimensional) vector bundles.

\begin{nota}
 We write $\N \coloneq \set{1,2,\ldots}$, respectively $\N_0 \coloneq \N \cup \set{0}$. As usual $\K$ will denote either the field of real numbers $\R$ or the field of complex numbers $\C$, respectively.
 For a normed space $(E,\norm{\cdot})$, $x \in E$ and $R>0$, we let $B_R^E (x)$ be the open ball of radius $R$ around $x$. 
 \end{nota}
 
The setting of analytic mappings used in this paper was developed in \cite{hg2002a} building on a notion of complex analytic maps outlined in \cite{BS71b}.
For the concrete definition and more information on the differential calculus, locally convex manifolds and Lie groups we refer to Appendix \ref{app: appendix}.

\begin{defn}			\label{def: local addition}
 Let $N$ be a real analytic manifold modelled on a locally convex space over $\R$.
 We call a real analytic map $\A \colon TN \supseteq \Omega \rightarrow N$ defined on an open neighbourhood $\Omega$ of the zero-section in $TN$ a \emph{(real analytic) local addition} if 
  \begin{enumerate}
   \item $(\pi_{TN},\A) \colon TN \supseteq \Omega \rightarrow N \times N$ induces a $C^\omega_\R$-diffeomorphism onto an open neighbourhood of the diagonal in $N \times N$,
   \item $\A (0_x) = x, \ \forall x \in N$, where $0_x$ is zero-element in the fibre over $x$. 
  \end{enumerate}
\end{defn}

\begin{rem}
 \begin{enumerate}
  \item For finite dimensional paracompact manifolds $N$ there always exists a real analytic local addition.
  It is given by a real analytic Riemannian exponential map $\exp \colon TN\supseteq \Omega \rightarrow N$ (see \cite{MR0098847} and \cite[7.5]{KM90}).
  \item Note that also every (possibly infinite dimensional) (real analytic) Lie group admits a real analytic local addition 
  due to the real analytic group structure and the fact that the tangent bundle is trivial (cf.\ \cite[42.4]{KM97}).
  \end{enumerate}\label{rem: cov}
\end{rem}

Our approach uses complexifications of several (possibly infinite-dimensional) vector bundles on $M$. 
The reader is referred to \cite[Section 3]{DGS2014} for remarks on the notation and details concerning these constructions.

\begin{setup}\label{setup: realana}
 Let $(F,\pi,M)$ be a real analytic vector bundle whose typical fibre $E$ is a locally convex vector space, $M$ is finite dimensional paracompact and $F$ a locally convex manifold. 
 By \cite[Proposition 3.5]{DGS2014} (also cf.\ \cite[7.1]{KM90} for the finite dimensional case) the bundle $(F,\pi,M)$ admits a unique bundle complexification $(F_\C , \pi_\C, M_\C)$. \medskip
 
 For a compact subset $K \subseteq M$ consider the real vector \emph{space of germs along $K$ of real analytic sections} $\Gamma^\omega_\R (F|K)$.
 The complexification of $\Gamma^\omega_\R (F|K)$ is given as 
  \begin{displaymath}
    \Gamma^\omega_\R (F|K)_\C = \Gamma^\omega_\C (F_\C|K)
  \end{displaymath}
(cf.\ \cite[7.2]{KM90}).
As the bundle complexification is unique this construction does not depend on the choice of complexifications.

In the following we identify $\Gamma^\omega_\R (F|K)$ with the corresponding complemented subspace of $\Gamma^\omega_\C (F_\C|K)$ 
and topologise $\Gamma^\omega_\R (F|K)$ with the subspace topology. 
If $F$ is a finite dimensional manifold, Lemma \ref{lem: topchar} shows that $\Gamma^\omega_\C (F_\C|K)$ is a Silva space. 
Since closed subspaces of Silva spaces are Silva spaces by \cite[Corollary 8.6.9]{barr87} $\Gamma^\omega_\R (F|K)$ is a Silva space if $\dim F <\infty$.
\end{setup}

\begin{setup}[Canonical charts for $C^\omega_\R (M,N)$]\label{setup: charts}
 Fix a compact real analytic manifold $M$ and a (possibly infinite dimensional) real analytic manifold $N$. 
 We require that $N$ admits a real analytic local addition $\A \colon TN \supseteq \Omega \rightarrow N$.
 
 Consider $f \in C^\omega_\R (M,N)$ and define the subset 
  \begin{displaymath}
   U_f \coloneq \setm{g\in C^\omega_\R (M,N)}{ (f(x),g(x)) \in (\pi_{TN},\A)(\Omega), \text{ for all } x \in M}.
  \end{displaymath}
 of $C^\omega_\R (M.N)$ together with a map $\Phi_f \colon U_f \rightarrow \Gamma^\omega_\R (f^*TN)$ given by 
  \begin{displaymath}
   \Phi_f (\gamma) \coloneq (\id_M, (\pi_{TN} ,\A)^{-1} \circ  (f,\gamma)). 
  \end{displaymath}
 In the following, we identify $f^*\Omega = \{(x,X) \in M \times TN \mid X \in T_{f(x)} N \cap \Omega\}$ with an open submanifold of $f^*TN$. 
 The topology on $\Gamma^\omega_\R (f^*TN)$ is the subspace topology of $\Gamma^\omega_\C ((f^*TN)_\C|M)$. 
 Now $\Gamma^\omega_\C ((f^*TN)_\C|M)$ is the locally convex inductive limit of steps whose topology is the compact open topology (cf.\ Lemma \ref{lem: cotop}).
 We deduce that the topology of $\Gamma^\omega_\C ((f^*TN)_\C|M)$ is finer than the compact open topology and thus the same holds for $\Gamma^\omega_\R (f^*TN)$.
 As $M$ is compact, this shows that $\Phi_f (U_f) = \setm{\sigma \in \Gamma^\omega_\R (f^*TN)}{\sigma (M) \subseteq f^*\Omega}$ is an open subset of $\Gamma^\omega_\R (f^*TN)$. 
 Define the real analytic map 
   \begin{displaymath}
    \tau_f \colon f^*\Omega \rightarrow (f\times \id_N)^{-1} (\pi_{TN},\alpha)(\Omega) \subseteq M\times N,\tau_f (x,X) \coloneq (x,\A (X)). 
   \end{displaymath}
 Clearly $\tau_f$ is bijective and respect the fibres over $M$. Its inverse is the real analytic map
  \begin{displaymath}
   \tau_f^{-1} (y,z) \coloneq (y, (\pi_{TN},\A)^{-1} (f(y),z). 
  \end{displaymath}
 By construction this map takes its image in $f^*\Omega \subseteq f^*TN$ and is continuous with respect to the subspace topology on this space induced by $M\times TN$ on the fibre product.
 We conclude that $\tau_f$ is a homeomorphism onto its (open) image and thus $$\Omega_{f,g} \coloneq \tau_g^{-1} (\tau_f (f^*\Omega)) \subseteq g^*\Omega$$ is open. 
 Let us now compute for $f,g \in C^\omega_\R (M,N)$ and  $\sigma$ in $\Phi_g (U_f\cap U_g)$ a formula for $\Phi_f \circ \Phi_g^{-1}$. 
 Denote by $\pi_{TN}^*g \colon g^*TN \rightarrow TN$ the bundle map covering $g$. 
 Then we obtain
 \begin{equation}\label{eq: changes}\begin{aligned}
     (\Phi_f \circ \Phi_g^{-1} )(\sigma) &= (\id_M, (\pi_{TN},\A)^{-1} \circ (f, \A \circ (\pi_{TN}^*g)\circ \sigma)\\
                                         &= \tau_f^{-1} \circ \tau_g \circ \sigma \equalscolon (\tau_f^{-1} \circ \tau_g)_*  (\sigma).
                                         \end{aligned}
    \end{equation}
 Here $(\tau_f^{-1} \circ \tau_g)_* $ is defined on $\lfloor M, \Omega_{f,g}\rfloor \coloneq \setm{\sigma \in \Gamma^\omega_\R (g^*TN)}{\sigma (M) \subseteq \Omega_{f,g}}$
 which is an open subset of $\Gamma^\omega_\R (g^*TN)$.\footnote{As is customary, we denote for a smooth or analytic map $h$ by $h_*$ the map defined by $h_*(\gamma) \coloneq h \circ \gamma$ on a suitable open subset of a space of mappings.} 
 Thus $\Phi_g (U_f \cap U_g) = \lfloor M, \Omega_{f,g}\rfloor$ is open in the compact open topology and also in the finer topology on $\Gamma^\omega_\R(g^*TN)$.  
\end{setup}

 We will now prove Theorem A. 
 The manifold structure constructed on $C^\omega_\R (M,N)$ is a manifold under the extended Definition \ref{defn: ext} of manifolds (i.e. the model space can depend on the chart).
 \newpage 
\begin{thm}\label{thm: ram:mfd}
 Let $M,N$ be real analytic manifolds such that $M$ is compact and $N$ admits a real analytic local addition.
\begin{enumerate}
  \item The family $(U_f,\Phi_f)_{f \in C^\omega_\R (M,N)}$ is a real analytic atlas for $C^\omega_\R (M,N)$. 
  \item The identification topology with respect to the atlas in {\normalfont (a)} turns $C^\omega_\R (M,N)$ into a (Hausdorff) real analytic manifold modelled on the spaces $\Gamma^\omega_\R (f^*TN)$ where $f$ runs through $C^\omega_\R(M,N)$.
  If $N$ is finite dimensional, $C^\omega_\R (M,N)$ is a manifold modelled on Silva spaces.
  \item  The manifold structure does not depend on the choice of local addition. 
 \end{enumerate}
\end{thm}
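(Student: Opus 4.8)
The plan is to handle the three parts in order, with essentially all of the analytic content concentrated in the chart changes of part (a); parts (b) and (c) then follow from the same circle of ideas.

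For (a) I would first dispose of the two formal requirements of an atlas. Covering is immediate, since $f \in U_f$ for every $f$: by property (b) of the local addition and the fact that $\Omega$ contains the zero-section, $(f(x),f(x)) = (\pi_{TN}(0_{f(x)}),\A(0_{f(x)}))$ lies in $(\pi_{TN},\A)(\Omega)$ for all $x$. That each $\Phi_f$ is injective with open image $\Phi_f(U_f) = \{\sigma \in \Gamma^\omega_\R(f^*TN) : \sigma(M) \subseteq f^*\Omega\}$ was already secured in Setup \ref{setup: charts}, injectivity following from $(\pi_{TN},\A)$ being a diffeomorphism and the compactness of $M$; the same setup shows $\Phi_g(U_f \cap U_g) = \lfloor M, \Omega_{f,g}\rfloor$ is open. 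The decisive point that remains is real analyticity of the chart changes. By \eqref{eq: changes} we have $\Phi_f \circ \Phi_g^{-1} = (\tau_f^{-1}\circ\tau_g)_*$, the pushforward operator attached to the fibre-preserving real analytic map $h \coloneq \tau_f^{-1}\circ\tau_g$ between open subsets of $g^*TN$ and $f^*TN$. Thus part (a) reduces to the statement that such a pushforward $h_*$ is real analytic between open subsets of the section-germ spaces.

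To prove this I would pass to complexifications. Using the bundle complexifications of Setup \ref{setup: realana}, extend $h$ to a holomorphic fibre-preserving map $h_\C$ between open subsets of $(g^*TN)_\C$ and $(f^*TN)_\C$, and consider the induced pushforward $(h_\C)_*$ on $\Gamma^\omega_\C((g^*TN)_\C|M)$ and $\Gamma^\omega_\C((f^*TN)_\C|M)$. Since $\Gamma^\omega_\R(f^*TN)$ sits in its complexification as a complemented real subspace and $(h_\C)_*$ restricts to $h_*$ on the real points, it suffices to show that $(h_\C)_*$ is holomorphic; the chart change then appears as the restriction of a holomorphic map to an open subset of a real form, which is precisely real analyticity in our sense. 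I expect this holomorphy to be the main obstacle. The strategy is to exploit the inductive-limit structure: $\Gamma^\omega_\C((g^*TN)_\C|M)$ is a locally convex inductive limit of Banach spaces of holomorphic sections over shrinking open complex neighbourhoods of $M$ carrying the compact-open topology, $h_\C$ sends a representative over one neighbourhood to a representative over a smaller one, and on each Banach step the superposition operator with the holomorphic symbol $h_\C$ is holomorphic by the usual Cauchy-estimate argument for holomorphic superposition operators. Passing to the inductive limit then yields holomorphy of $(h_\C)_*$, using that for the Silva (and, more generally, compactly regular) inductive limits at hand holomorphy may be checked on the Banach steps.

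Part (b) is then formal. Declaring $O \subseteq C^\omega_\R(M,N)$ open exactly when each $\Phi_f(O \cap U_f)$ is open equips the space with a topology for which the atlas of (a) consists of homeomorphisms onto open sets. For the Hausdorff property I would note that the section-germ topology is finer than the compact-open topology, so the manifold topology is finer than the compact-open topology on $C(M,N)$; since $M$ is compact and $N$ is a Hausdorff manifold the latter is Hausdorff, whence so is the manifold topology. The finite-dimensional claim is inherited directly from Setup \ref{setup: realana}: when $N$ is finite dimensional each $f^*TN$ has finite rank, so each model space $\Gamma^\omega_\R(f^*TN)$ is a Silva space.

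For (c) let $\A_1,\A_2$ be two real analytic local additions with associated atlases. I would establish their real-analytic compatibility by computing a mixed chart change $\Phi^1_f \circ (\Phi^2_g)^{-1}$: exactly as in \eqref{eq: changes} it is a pushforward $k_*$ by a fibre-preserving real analytic map $k$ now assembled from both $\A_1$ and $\A_2$, so the complexification argument of (a) applies verbatim and shows $k_*$ is real analytic. Consequently the union of the two atlases is again a real analytic atlas, the identity is a real analytic diffeomorphism between the two structures, and the manifold structure, including its topology, is independent of the choice of local addition.
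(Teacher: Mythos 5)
Your overall architecture for (a) --- reduce everything to holomorphy of the complexified pushforward $(h_\C)_*$ acting on spaces of germs of sections, then use locality of real analyticity --- is exactly the paper's, and your treatments of (b) and (c) match the paper's up to cosmetic differences (the paper checks Hausdorffness via continuity of the point evaluations rather than by comparison with the compact-open topology on $C(M,N)$; both work). The genuine gap sits precisely at the step you single out as the main obstacle. You assert that $\Gamma^\omega_\C((g^*TN)_\C|M)$ is an inductive limit of \emph{Banach} spaces of holomorphic sections and that holomorphy of $(h_\C)_*$ may be checked on the steps because the limit is Silva, or at least compactly regular. This is only true when the typical fibre of $TN$ is finite dimensional: the theorem allows $N$ to be an arbitrary locally convex manifold with a local addition, in which case the steps $\Gamma^\omega_\C(F_\C|U_n)$ carry the compact-open topology, are not normable, the limit is not a Silva space, and no ``holomorphy can be tested on the steps'' principle is available. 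The paper's introduction explicitly flags this as the reason the convenient-setting argument does not transfer, so the case your sketch actually covers is the one that was already unproblematic.

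Two further points, one essential and one fixable. Essential: before any Cauchy-estimate or superposition argument can be run, bundle-valued sections must be converted into vector-valued germs, and since $M$ is in general not covered by a single trivialisation this requires the embedding $\Theta$ of the section-germ space into a finite direct sum $\bigoplus_{\alpha} \Hol (K_\alpha \subseteq M_\alpha^\C , E_\C)$ over trivialising compacta, together with the fact that $\Theta_f$ has closed image (Lemma~\ref{lem: topchar}) so that being $C^\infty_\C$ can be pulled back through $\Theta_f^{-1}$; the paper then does not reprove holomorphy of the local superposition operators but invokes \cite[Proposition 3.3]{hg2004b}, which is valid for arbitrary locally convex target fibres. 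Some such lemma (with proof or citation) is what your argument is missing. Fixable: the fibre-preserving holomorphic extension $h_\C$ exists only on a neighbourhood of each compact image $\sigma(M)$ (not globally on the complexification of $\Omega_{f,g}$), and arranging that it preserves fibres requires the shrinking argument via the identity theorem carried out in Step~2 of the paper's proof.
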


\begin{proof}
 \begin{enumerate}
  \item Clearly $(U_f,\Phi_f)_{f \in C^\omega_\R (M,N)}$ is an atlas for $C^\omega_\R (M,N)$.
  We have to show that the changes of charts are real analytic. 
  To this end consider $f,g \in C^\omega_\R (M,N)$ with $U_f \cap U_g \neq \emptyset$ and fix $\sigma \in \Phi_g (U_f \cap U_g)$. 
  We will now construct a complex analytic map on an open $\sigma$-neighbourhood in the complexification $\Gamma^\omega_\R (g^*TN)_\C$ which extends $\Phi_f \circ \Phi_g^{-1}$. 
  To achieve this, proceed in several steps.\bigskip
  
 \textbf{Step 1: Complexifications of bundles and local data.}
 
 We form the pullback bundles $(f^*TN, f^*\pi_{TN},M)$ and $(g^*TN, g^*\pi_{TN}, M)$.
 These bundles are locally convex bundles over a finite dimensional paracompact base.
 Hence both pullback bundles admit unique bundle complexifications (cf.\ \ref{setup: realana}) which we denote by $((f^*TN)_\C, (f^*\pi_{TN})_\C,M^*)$ and $((g^*TN)_\C, (g^*\pi_{TN})_\C, M^*)$, respectively.
 Note that by passing to open subsets in the complexification, we may choose the same complexification $M^*$ of $M$ as base for the complex bundles.
 
 Following \ref{setup: realana} we identify the complexifications $\Gamma^\omega_\R (f^*TN)_\C = \Gamma^\omega_\C ((f^*TN)_\C|M)$ and $\Gamma^\omega_\R (g^*TN)_\C = \Gamma^\omega_\C ((g^*TN)_\C|M)$.
 Hence $\sigma \in \Phi_g (U_f \cap U_g) \subseteq \Gamma_\R^\omega (g^*TN)$ is associated to a germ $\tilde{\sigma} \in \Gamma^\omega_\C ((g^*TN)_\C|M)$ of a complex analytic section.

 Finally, we fix some local data.
 As $M$ is compact, we can choose a finite set $A$ with the following properties: 
  \begin{enumerate}
   \item[(i)] For $\alpha \in A$ there is a bundle trivialisations $\kappa_\alpha^f \colon (f^*\pi_{TN})^{-1} (M_\alpha) \rightarrow M_\alpha \times E_\C$ of $(f^*TN)_\C$ 
   which restricts on the real analytic submanifold $f^*TN$ to a bundle trivialisation of $f^*TN$.
   \item[(ii)] For $\alpha \in A$ there is a bundle trivialisations $\kappa_\alpha^g \colon (g^*\pi_{TN})^{-1} (M_\alpha) \rightarrow M_\alpha \times E_\C$ of $(g^*TN)_\C$ 
   which restricts on the real analytic submanifold $g^*TN$ to a bundle trivialisation of $g^*TN$.
   \item[(iii)] There are compact sets $K_\alpha \subseteq M_\alpha \cap M$ with $M = \bigcup_{\alpha \in A} K_\alpha$.
  \end{enumerate}
\bigskip

\textbf{Step 2: A suitable $\tilde{\sigma}$-neighbourhood $\oO_\sigma$ in $\Gamma^\omega_\R (g^*TN)_\C = \Gamma^\omega_\C ((g^*TN)_\C|M)$.}
 As $M$ is compact and thus finite dimensional, the complexification $M^*$ in Step 1 is paracompact and thus regular as a topological space.
 From Lemma \ref{lem: reg:bun} we deduce that thus $(f^*TN)_\C$ and $(g^*TN)_\C$ are regular as topological spaces.
 Observe that the image $\sigma (M)$ is a compact subset of $\Omega_{f,g} \subseteq g^*TN$.
 We thus deduce from \cite[Lemma 2.2 (a)]{DGS2014} that there is an open complex neighbourhood $O_1 \subseteq (g^*TN)_\C$ of $\sigma (M)$ on which
 $\tau_f^{-1} \circ \tau_g|_{\Omega_{f,g}} \colon \Omega_{f,g} \rightarrow f^*TN$ extends to a complex analytic map $\phi \colon (g^*TN)_\C \supseteq O_1 \rightarrow O_2 \subseteq (f^*TN)_\C$. 
 
 Without loss of generality we can assume $O_1 \cap f^*TN \subseteq \Omega_{f,g}$. 
 Shrinking $O_1$ further, we may assume that $\phi$ preserves the fibres of the complex bundle. 
 To see this note that $\tau_f^{-1} \circ \tau_g$ is fibre preserving. 
 Then we compute in pairs of trivialisations which satisfy (i) and (ii) of Step 1. It is easy to construct a complex analytic extension on a complex neighbourhood which preserves the fibres. 
 Finally the identity theorem for real analytic functions shows that we can shrink $O_1$ such that $\phi$ is a fibre preserving map of the complex bundle.
 
 As $\sigma (M)$ is contained in $O_1$, we deduce for each $\alpha \in A$ from (iii) in Step 1 that $\sigma (K_\alpha)$ is contained in the open set 
  \begin{displaymath}
   U_\alpha \coloneq \phi^{-1} ((f^*\pi_{TN})_\C^{-1} (M_\alpha)) \cap (g^*\pi_{TN})_\C^{-1} (M_\alpha) \cap O_1 \subseteq (g^*TN)_\C.
  \end{displaymath}
 Now fix $\alpha \in A$ and consider the family $(\kappa^g_\alpha (x))_{x \in K_\alpha}$. 
 Recall that $K_\alpha$ is a compact subset of a finite dimensional whence locally compact manifold. 
 Apply now Wallace theorem \cite[3.2.10]{Engelking1989} to obtain a finite family of compact sets $(K_{\alpha , k})_{1 \leq k \leq n_\alpha}$
 which satisfy the following properties. 
 \begin{itemize} 
  \item For $1\leq k \leq n_\alpha$ there are open subsets $O_{\alpha,k,1} \subseteq M_\alpha$ and $O_{\alpha,k,2} \subseteq E_\C$ such that $\kappa_\alpha^g (\sigma (K_{\alpha,k})) \subseteq O_{\alpha,k,1} \times O_{\alpha,k,2}$.
        This entails $K_{\alpha,k} \subseteq O_{\alpha,k,1} \subseteq M_\alpha$.
  \item The open set $O_{\alpha,k} \coloneq (\kappa^g_{\alpha})^{-1} (O_{\alpha, k,1} \times O_{\alpha, k,2})$ is contained in $U_\alpha$.
  \item $K_\alpha \subseteq \bigcup_{1\leq k \leq n_\alpha} K_{\alpha , k}$.
 \end{itemize}
 Repeat this construction for each $\alpha \in A$. 
 Then we can replace $(K_{\alpha})_{\alpha \in A}$ with a finite family of compact subsets such that $n_\alpha =1$ for all $\alpha \in A$ and the above conditions are satisfied.
 To shorten the notation denote this refinement again by $(K_{\alpha})_{\alpha \in A}$ and write $O_{\alpha,i} \coloneq O_{\alpha ,1,i}$ for $i \in \{1,2\}$ and $O_\alpha \coloneq O_{\alpha,1}$.
 
 Observe that by construction $\tilde{\sigma}$ is contained in each of the sets  
  \begin{displaymath}
   \lfloor K_\alpha , O_\alpha \rfloor \coloneq \setm{s \in \Gamma_\C^\omega ((g^*TN)_\C|M)}{s(K_\alpha) \subseteq O_{\alpha}}
  \end{displaymath}
The topology on $\Gamma_\C^\omega ((g^*TN)_\C)|M)$ is finer than the compact open topology (see Definition \ref{defn: ngermtop} and Lemma \ref{lem: cotop}). 
Thus the sets $ \lfloor K_\alpha , O_\alpha \rfloor$ are open.
Since $A$ is finite, the set $\oO_\sigma = \bigcap_{\alpha \in A} \lfloor K_\alpha , O_\alpha \rfloor$ is an open $\tilde{\sigma}$-neighbourhood.
Moreover, each $O_\alpha$ is contained in $O_1$ and thus property (iii) in Step 1 yields $\oO_\sigma \subseteq \lfloor M , O_1\rfloor = \{s \in \Gamma_\C^\omega ((g^*TN)_\C|M) \mid s(M)\subseteq O_1\}$.
The topology on $\Gamma_\C^\omega ((g^*TN)_\C|M)$ is finer than the compact open topology, whence the set $\lfloor M , O_1\rfloor$ is open in $\Gamma_\C^\omega ((g^*TN)_\C|M)$.
\bigskip

\textbf{Step 3: Embed the spaces of complex sections.} 
 With the notation of Lemma \ref{lem: topchar} we obtain topological embeddings 
  \begin{align*}
   \Theta_f &\colon \Gamma^\omega_\C ((f^*TN)_\C|M) \rightarrow \bigoplus_{\alpha \in A} \Hol (K_\alpha \subseteq M_\alpha^\C,E_\C), s \mapsto (I_{\kappa^f_{\alpha}} \circ \res^M_{K_\alpha} (s))_{\alpha \in A},\\
   \Theta_g &\colon \Gamma^\omega_\C ((g^*TN)_\C|M) \rightarrow \bigoplus_{\alpha \in A} \Hol (K_\alpha \subseteq M_\alpha^\C,E_\C), s \mapsto (I_{\kappa^g_{\alpha}} \circ \res^M_{K_\alpha} (s))_{\alpha \in A}.
  \end{align*}
 Observe that the topology on $\Hol (K_\alpha \subseteq M_\alpha,E_\C)$ is finer than the compact open topology 
 (Lemma \ref{lem: restro} (a) asserts $\Hol (K_\alpha \subseteq M_\alpha,E_\C) \cong \Gamma^\omega_\C ((f^*TN)_\C|K_\alpha)$ and the topology on the latter space satisfies this property).
 Hence, 
  \begin{displaymath}
   I_{\kappa^g_{\alpha}} \circ \res^M_{K_\alpha} (\lfloor K_\alpha , O_\alpha\rfloor) = \lfloor K_\alpha , O_\alpha^2\rfloor \subseteq \Hol (K_\alpha \subseteq M_{\alpha},E_\C)
  \end{displaymath}
 is open. 
 We deduce that $\Theta_g(\oO_\sigma)$ is contained in the open box-neighbourhood 
 $\bigoplus_{\alpha \in A} \lfloor K_\alpha, O_\alpha^2\rfloor \subseteq \bigoplus_{\alpha \in A} \Hol (K_\alpha \subseteq M_\alpha,E_\C)$.\bigskip
 
 \textbf{Step 4: A complex analytic extension on $\oO_\sigma$.}
 We define the map 
  \begin{displaymath}
   \phi_* \colon \Gamma_\C^\omega ((g^*TN)_\C|M)  \supseteq \lfloor M, O_1\rfloor \rightarrow \Gamma^\omega_\C ((f^*TN)_\C|M), s \mapsto \phi \circ s.
  \end{displaymath}
  Note that the map $\phi_*$ makes sense, as $\phi$ is fibre preserving. 
  Moreover, as $\oO_\sigma \subseteq \lfloor M, O_1\rfloor$ is a complex open neighbourhood of $\tilde{\sigma}$, 
  $\phi_*$ extends $(\tau_f^{-1} \circ \tau_g)_*$ in an open neighbourhood of $\sigma$ in the complexification.
    
  To see this define maps $f_\alpha \colon \lfloor K_\alpha , O_\alpha^2\rfloor \rightarrow \Hol (K_\alpha \subseteq M_\alpha^\C , E_\C)$ via 
  \begin{displaymath}
   f_\alpha (\gamma) \coloneq (\pr_2 \circ \kappa^f_{\alpha} \circ \phi \circ (\kappa^g_{\alpha})^{-1})_* (\id_{K_\alpha}, \gamma)
  \end{displaymath}
 By \cite[Proposition 3.3.]{hg2004b}, $f_\alpha$ is a $C^\infty_\C$-map and we obtain a map 
  \begin{displaymath}
   \oplus_{\alpha \in A} f_\alpha \colon  \oplus_{\alpha \in A} \Hol (K_\alpha \subseteq M^\C_\alpha, E_\C)\supseteq \oplus_{\alpha \in A} \lfloor K_\alpha , O_\alpha^2\rfloor \rightarrow \oplus_{\alpha \in A} \Hol (K_\alpha M^\C_\alpha, E_\C).
  \end{displaymath}
 This map satisfies $\oplus_{\alpha \in A} f_\alpha \circ \Theta_g|_{\oO_\sigma} = \Theta_f \circ \phi_*|_{\oO_\sigma}$. 
 Since every $f_\alpha$ is $C^\infty_\C$, the map $\oplus_{\alpha \in A} f_\alpha$ is $C^\infty_\C$ by \cite[Proposition 4.7]{hg2011}.
 Recall from Lemma \ref{lem: topchar} that $\Theta_f$ is a linear topological embedding with closed image. 
 Hence $\phi_*|_{\oO_\sigma} = \Theta_f^{-1} \circ \oplus_{\alpha \in A} f_\alpha \circ \Theta_g|_{\oO_\sigma}$ implies that $\phi_*|_{\oO_\sigma}$ is $C^\infty_\C$. 
 \bigskip
 
 We summarise now the results from the Steps 1-4. 
 We have seen that the map $(\tau_f^{-1}\circ \tau_g)_*$ extends to a complex analytic map on a neighbourhood of each element in its domain. 
 As real analyticity is a local property, this shows that $(\tau_f^{-1}\circ \tau_g)_*$ is real analytic.
 Hence \eqref{eq: changes} shows that $\Phi_f \circ \Phi_g^{-1}$ is a real analytic map. 
  
 \item Endow $C^\omega_\R (M,N)$ with the identification topology of the atlas constructed in (a). 
  The topological space $C^\omega_\R (M,N)$ will be a real analytic locally convex manifold if we can show that the identification topology on $C^\omega_\R (M,N)$ is Hausdorff. 
  To see this, it suffices to prove that for all $x \in M$ the point evaluations $\ev_x \colon C^\omega_\R (M,N) \rightarrow N, f \mapsto f(x)$  are continuous with respect to the identification topology. 
 By definition we thus have to prove that for all $f \in C^\omega_\R (M,N)$ and $x \in M$ the composition $\ev_x \circ \Phi_f^{-1} \colon \Phi_f (U_f) \rightarrow N$ is continuous.
 One easily computes for a section $\sigma \in \Phi_f (U_f)$ the identity $\ev_x \circ \Phi_f^{-1} (\sigma) = \A \circ \pi^*_{TN} (\sigma (x))$.
 Hence, it suffices to observe that the point evaluations on $\Gamma^\omega_\C ((f^*TN)_\C|M)$ are continuous (as $\Gamma^\omega_\R (f^*TN)$ is topologised as a subspace).
 By definition $\Gamma^\omega_\C ((f^*TN)_\C|M)$ is the inductive limit of spaces $\Gamma^\omega_\C ((f^*TN)_\C|W)$ on which the point evaluations are continuous (see Definition \ref{defn: ngermtop}). 
 As point evaluations are linear, an inductive limit argument shows that $\ev_x$ is continuous on the limit. 
 In conclusion $C^\omega_\R (M,N)$ is a Hausdorff topological space.
 \item  Let $\A^\#$ be another real analytic local addition on $N$. 
 Construct new charts $\Phi^\#_f$ and maps $\tau_f^\#$ as in \ref{setup: charts} with respect to $\A$. 
 We have only used that $(\pi_{TN}, \A)$ restricts to a diffeomorphism on an open neighbourhood $\Omega$ of the zero section in $TN$. 
 By definition of a local addition the same holds for $\A^\#$.
 Thus we can define $\tau_f^\# \circ \tau_g^{-1}$ on an open subset $\Omega_{f,g}^\# \subseteq g^* (\Omega \cap \Omega^\#)$ 
 (depending both on $\A$ and $\A^\#$). 
 Furthermore, we obtain an identity analogous to \eqref{eq: changes} (including now $\Phi_f^\#$ and $\tau_f^\#$). 
 Note that the arguments and constructions in Step 1, 3 and 4 of (a) do not depend on $\A$ and can thus be copied verbatim.
 Finally it is easy to see that also the construction in Step 2 of (a) can easily be adapted (as only the choices of compact and open sets have to be changed).
 Hence analogous arguments as in (a) show that the charts constructed with respect to different local additions are compatible, i.e.\ the resulting change of charts are real analytic.
 We conclude that the construction does not depend on the choice of local addition.\qedhere
 \end{enumerate}
\end{proof}

Recall that in \cite[\S 10]{Michor1980} a smooth manifold structure for $C^\infty_\R (M,N)$ has been constructed.
Moreover, the construction in \cite[Theorem 10.4]{Michor1980} carries over to an infinite-dimensional manifold $N$ which admits a local addition.\footnote{In fact, one has only to replace the $\Omega$-Lemma in the proof of \cite[Theorem 10.4]{Michor1980} by Gl\"{o}ckner's $\Omega$-Lemma \cite[Theorem F.23]{hg2004c}.}    
The manifold $C^\infty_\R (M,N)$ is modelled on spaces of smooth sections $\Gamma^\infty (f^*TN)$ for $f \in C^\infty_\R(M,N)$ with canonical charts defined analogously to the charts constructed in \ref{setup: charts}. 
We remark that the notion of differentiability adopted in \cite{Michor1980} coincides with the one adopted in this paper. 
Hence, the set $C^\infty_\R (M,N)$ is a smooth manifold and we can copy the arguments in \cite[p.48f.]{KM90} verbatim to obtain the following results: 

\begin{setup}[\hspace{-1pt}{\cite[Theorem 8.3]{KM90}}]\label{thm: cira}
\emph{Let $M$, $N$ be real analytic finite dimensional manifolds, with $M$ compact. 
Then the smooth manifold $C^\infty_\R (M,N)$ with the structure from \cite[10.4]{Michor1980} is a real analytic manifold.
In fact a real analytic atlas is given by
  \begin{displaymath}
   \Phi_f^\infty \colon C^\infty_\R (M,N) \supseteq U_f \rightarrow \Gamma^\infty (f^*TN), g \mapsto (\id_M, (\pi_{TN}, \A)^{-1} (f,g))
  \end{displaymath}
 where $f$ runs through $C^\omega_\R (M,N)$ and $U_f$ is defined as in \ref{setup: charts} with respect to the (real analytic) local addition $\A$.
 As $M$ is compact, the model spaces $\Gamma^\infty (f^*TN)$, are endowed with the compact open $C^\infty_\R$-topology.} 
\end{setup}

\begin{prop}\label{prop: cinftytop}
 Let $M$ and $N$ be real analytic manifolds and assume that $M$ is compact. 
 Consider the canonical inclusion $\iota \colon C^\omega_\R (M,N) \rightarrow C^\infty_\R (M,N)$. 
 \begin{itemize}
  \item [(a)] If $N$ is finite dimensional, then $\iota$ is a real analytic map with respect to the real analytic manifold structures of Theorem \ref{thm: ram:mfd} and \ref{thm: cira}. 
  \item [(b)] If $N$ is infinite-dimensional and admits a local addition, then $\iota$ is of class $C^\infty_\R$ with respect to the smooth structures of Theorem \ref{thm: ram:mfd} and \cite[\S 10]{Michor1980} on $C^\infty (M,N)$.
 \end{itemize}
\end{prop}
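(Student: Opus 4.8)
The plan is to reduce the statement to the continuity of a single $\R$-linear map between the model spaces, and then to invoke the fact that a continuous linear map is automatically smooth and, via its complex-linear extension to the complexifications, also real analytic.

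First I would arrange that both manifold structures are described by charts built from \emph{the same} real analytic local addition $\A$. This is legitimate, since Theorem \ref{thm: ram:mfd}~(c) shows that the real analytic structure on $C^\omega_\R(M,N)$ is independent of $\A$, while \ref{thm: cira} (respectively Michor's construction in \cite[\S 10]{Michor1980}) produces the charts $\Phi_f^\infty$ from an arbitrary local addition as well. Because a real analytic map is in particular smooth, the chart domain $U_f \subseteq C^\omega_\R(M,N)$ is carried by $\iota$ into the chart domain $U_f \subseteq C^\infty_\R(M,N)$ for every $f \in C^\omega_\R(M,N)$. A direct computation with the defining formulae $\Phi_f(\gamma) = (\id_M,(\pi_{TN},\A)^{-1}\circ(f,\gamma))$ and $\Phi_f^\infty(g) = (\id_M,(\pi_{TN},\A)^{-1}\circ(f,g))$ then yields
\[
  \Phi_f^\infty \circ \iota \circ \Phi_f^{-1} = j|_{\Phi_f(U_f)},
\]
where $j \colon \Gamma^\omega_\R(f^*TN) \to \Gamma^\infty(f^*TN)$ is the linear map sending a real analytic section to the underlying smooth section. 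Thus the local representatives of $\iota$ are all restrictions of the linear inclusion $j$.

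Next I would recall that a continuous linear map between locally convex spaces is smooth, and moreover real analytic, because its complex-linear extension to the complexifications is again continuous and hence $C^\infty_\C$ (see \cite{hg2002a}). It therefore suffices to prove that $j$ is continuous. This single argument settles both assertions: in case~(a), where $N$ is finite dimensional and $C^\infty_\R(M,N)$ carries the real analytic structure of \ref{thm: cira}, continuity of $j$ gives real analyticity of $\iota$; in case~(b), where $C^\infty_\R(M,N)$ is only known to be a smooth manifold, the same continuity gives smoothness of $\iota$, which is all that can be claimed.

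To establish continuity of $j$ I would argue locally, using the embeddings of Lemma \ref{lem: topchar}. The topology on $\Gamma^\omega_\R(f^*TN)$ is the subspace topology of $\Gamma^\omega_\C((f^*TN)_\C|M)$, which by Lemma \ref{lem: cotop} is an inductive limit of spaces of holomorphic sections over shrinking complex neighbourhoods of $M$, each carrying the compact open topology, whereas the target $\Gamma^\infty(f^*TN)$ carries the compact open $C^\infty$-topology. In bundle trivialisations satisfying the conditions of Step~1 of Theorem \ref{thm: ram:mfd} the problem reduces to the continuity of the inclusion of $\Hol(K_\alpha \subseteq M_\alpha^\C, E_\C)$ into the corresponding space of smooth $E_\C$-valued maps on the compact set $K_\alpha$. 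Continuity of $j$ out of the inductive limit is then checked step by step, and on each step the required estimate is the vector-valued Cauchy integral formula: a holomorphic map bounded on a complex neighbourhood has all partial derivatives bounded on interior compacta, with bounds controlled by supremum seminorms, so locally uniform convergence of holomorphic maps forces convergence of all derivatives on $M$. I expect the main obstacle to lie precisely in this last estimate in the presence of an \emph{infinite dimensional} fibre $E$: the Cauchy estimates must be formulated seminorm by seminorm on $E_\C$, and one must match the continuous seminorms defining the topology of the holomorphic section spaces with those defining the compact open $C^\infty$-topology on $\Gamma^\infty(f^*TN)$, using the finer-than-compact-open property recorded in the excerpt. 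Once this local estimate is in place, the inductive limit and direct sum arguments, exactly as in Steps~3 and~4 of the proof of Theorem \ref{thm: ram:mfd}, assemble it into continuity of $j$ and thereby complete both~(a) and~(b).
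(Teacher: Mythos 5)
Your proposal is correct and follows essentially the same route as the paper: reduce to the single linear inclusion $\Lambda\colon \Gamma^\omega_\R(f^*TN)\to\Gamma^\infty(f^*TN)$ in the canonical charts, observe that continuity of a linear map yields both smoothness and (via the continuous complex-linear extension) real analyticity, and verify continuity by passing to local trivialisations and comparing the compact open topology on holomorphic germs with the compact open $C^\infty$-topology through Cauchy estimates (Lemma \ref{lem: cocinfty}), with seminorms on $E_\C$ chosen to induce the given seminorms on $E$. The only cosmetic difference is that you organise the local reduction via the source-side embedding of Lemma \ref{lem: topchar}, whereas the paper uses the initial-topology description of $\Gamma^\infty(f^*TN)$ via the maps $\theta_\psi^\infty$ and the restriction map $H$ into a single germ space.
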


\begin{proof}
 Computing in canonical charts, we see that for $f \in C^\omega_\R (M,N)$ the canonical inclusion maps $\dom \Phi_f$ into $\dom \Phi_f^\infty$.
 Thus it suffices to consider the local representative $\Phi_f^\infty \circ \iota \circ \Phi_f^{-1}$ 
 which coincides with the restriction of the canonical inclusion $\Lambda \colon \Gamma^\omega_\R (f^*TN) \rightarrow \Gamma^\infty (f^*TN)$.
 As $\Lambda$ is linear it is thus sufficient to prove that $\Lambda$ is continuous.
 Denote the typical fibre of the vector bundle $f^*TN$ by $E$. 
 By definition of the compact open $C^\infty_\R$-topology, the topology on $\Gamma^\infty (f^*TN)$ is initial with respect to the linear mappings 
  \begin{displaymath}
   \theta_\psi^\infty \colon \Gamma^\infty (f^*TN) \rightarrow C^\infty_\R (M_\psi , E), X \mapsto \pr_2 \circ \psi \circ X|_{M_\psi}
  \end{displaymath}
 where $\psi$ runs through all (real analytic) bundle trivializations.

 Fixing a trivialization $\psi$ we will show that $\theta_\psi^\infty \circ \Lambda$ is continuous. 
 In the following we use standard multiindex notation to denote partial derivatives. 

 Recall from Definition \ref{defn: cocinfty} that a typical zero-neighbourhood $C^\infty_\R (M_\psi, E)$ is of the form
  \begin{displaymath}
   \Omega_{\kappa,K,n,p} \coloneq \setm{g\in C^\infty_\R (M_\psi, E)}{ \sup_{|\alpha| \leq n} P_{\alpha, \kappa , K,p} (g) = \sup_{|\alpha| \leq n}\sup_{x \in K} p\left(\partial^\alpha (g \circ \kappa^{-1})(x)\right) < 1 }, 
  \end{displaymath}
 where $\kappa \colon M_\psi \supseteq U_\kappa \rightarrow V_\kappa \subseteq \R^k$ is a real analytic manifold chart, $K\subseteq V_\kappa$ is compact, $ n \in \N_0$, and $p$ is a continuous seminorm on $E$.
 Fix $\kappa$, $K\subseteq V_\kappa$ compact, $n \in \N_0$ and a continuous seminorm $p$. 
 We construct a zero-neighbourhood in $\Gamma^\omega_\R (f^*TN)$ 
 which is mapped by $\theta^\infty_\psi \circ \Lambda$ to $\Omega_{\kappa,K,n,p}$. 
 
 The topology on $\Gamma^\omega_\R (f^*TN)$ is the subspace topology induced by $\Gamma^\omega_\C ((f^*TN)_\C|M)$.
 Since $\psi$ is a real analytic bundle trivialization, the complex analytic extension $\psi_\C$ of $\psi$ yields a bundle trivialization for $(f^*TN)_\C$ (cf.\ \cite[Proposition 3.5]{DGS2014}). 
 Analogously, we can extend $\kappa$ to a complex analytic chart $\kappa_\C$ of the complexification $M_\psi^\C$.
 Now by a combination of Lemma \ref{lem: restro} (b) and (a) we obtain a continuous linear map 
   \begin{displaymath}
    H \colon \Gamma^\omega_\C ((f^*TN)_\C|M) \rightarrow \Hol (K \subseteq M_{\psi_\C}, E_\C) , X \mapsto \pr_2 \circ \psi_\C \circ \res^M_K(X). 
   \end{displaymath}
 Recall that $ \Hol (K \subseteq M_{\psi_\C}, E_\C)$ is the inductive limit of the spaces $\Hol (U_k, E_\C)$ (where $(U_k)_{k \in \N}$ is a fundamental neighbourhood of $K$ in $M_{\psi_\C}$). 
 Each of the steps carries the compact open topology, which coincides with the compact open $C^\infty_\C$-topology for complex analytic maps by Lemma \ref{lem: cocinfty}.
 Hence by definition of the locally convex inductive limit, the set 
  \begin{displaymath}
   O_{\kappa_\C, K,n,p} \coloneq \setm{g \in \Hol (K \subseteq M_{\psi_\C}, E_\C)}{\sup_{|\alpha| \leq n} P_{\alpha,\kappa_\C, K, p_\C} (g) < 1} 
  \end{displaymath}
 is an open zero-neighbourhood where we chose a seminorm $p_\C$ on $E_\C$ which induces the given seminorm $p$ on the real subspace $E$. (This is possible by \cite[Section 2]{BS71a}.) 
 Note that the partial derivatives in the definition of $O_{K,n,p}$ are taken with respect to complex variables. 
 Since $\psi_\C|_{\dom \psi} = \psi$ and $\kappa_\C |_{M_\psi \cap U_{\kappa_\C}} = \kappa$, the zero-neighbourhood $H^{-1} (O_{\kappa_\C, K,n,p}) \cap \Gamma^\omega_\R (f^*TN)$ is mapped by $\theta_\psi^\infty \circ \Lambda$ into $\Omega_{\kappa,K,n,1}$.
\end{proof}

Keller $C^\infty_\K$-maps coincide on Silva spaces with smooth mappings in the convenient sense (cf.\ \cite[1.3]{KM90}). 
Thus we can almost\footnote{The authors were not able to follow the argument given in \cite[Theorem 43.3]{KM97} assuring that the identification topology on $C^\omega_\R (M,M)$ is finer than the compact open $C^\infty_\R$-topology.
Hence we chose to replace the argument showing that $\Diff^\omega (M)$ is an open subset in $C^\omega_\R (M,N)$.} 
copy the proof for the following result. 

\begin{prop}[{\hspace{-1pt}\cite[Theorem 43.3]{KM97}}]	\label{prop: diffomega}
 For a compact real analytic manifold $M$ the group $\Diffw (M)$ of all real analytic diffeomorphisms of $M$ is an open submanifold of $C^\omega_\R (M, M)$.
 
 Composition and inversion in this group are smooth, whence $\Diffw (M)$ is a smooth Lie group modelled on the Silva space $\vectw{M}$.
 Its Lie algebra is the space $\vectw{M}$ of all real analytic vector fields on $M$, equipped with the negative of the Lie bracket of vector fields. 
 The associated exponential mapping $\exp \colon \vectw{M} \rightarrow \Diffw (M)$ is the flow mapping to time $1$, and it is smooth.
\end{prop}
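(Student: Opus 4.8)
The plan is to follow the argument of Kriegl and Michor in \cite[Theorem 43.3]{KM97} as closely as possible, and to replace only the one step where our topology genuinely differs from theirs, namely the proof that $\Diffw(M)$ is open. Accordingly I would split the proof into three parts: openness of $\Diffw(M)$, smoothness of the group operations, and the identification of the Lie algebra and the exponential map. The first part is the only place where a new argument is needed; the remaining parts can be transcribed from the convenient setting once the openness is secured.

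For openness the essential input is Proposition \ref{prop: cinftytop}. Since here $N=M$ is finite dimensional, part (a) applies and the canonical inclusion $\iota \colon C^\omega_\R(M,M) \to C^\infty_\R(M,M)$ is real analytic, in particular continuous. It is classical that, for compact $M$, the set $\Diff^\infty(M)$ of smooth diffeomorphisms is open in $C^\infty_\R(M,M)$ (see \cite{Michor1980}). I would then show $\Diffw(M) = \iota^{-1}(\Diff^\infty(M))$. The inclusion ``$\subseteq$'' is clear. For the reverse inclusion, suppose $f \in C^\omega_\R(M,M)$ is a smooth diffeomorphism; then $f$ is a bijective real analytic map whose differential is everywhere invertible, so the classical (finite dimensional) real analytic inverse function theorem forces $f^{-1}$ to be real analytic, whence $f \in \Diffw(M)$. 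Continuity of $\iota$ now gives that $\Diffw(M)$ is open, hence an open submanifold of $C^\omega_\R(M,M)$ modelled on the tangent space $\Gamma^\omega_\R(\id_M^*TM) \cong \vectw{M}$, which is a Silva space by Theorem \ref{thm: ram:mfd}(b). This routes openness entirely through $\iota$ and so avoids the comparison of the identification topology with the compact open $C^\infty_\R$-topology that the footnote flags as problematic.

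For the group operations I would exploit the observation recorded just before the statement: on the Silva space $\vectw{M}$ the Keller $C^\infty_\C$-calculus used in this paper agrees with the convenient calculus (cf.\ \cite[1.3]{KM90}). Kriegl and Michor established in \cite[\S 43]{KM97} that composition $C^\omega_\R(M,M) \times C^\omega_\R(M,M) \to C^\omega_\R(M,M)$ and inversion $\Diffw(M) \to \Diffw(M)$ are smooth in the convenient sense; by the coincidence of the two calculi on Silva spaces these maps are automatically smooth in our sense, so $\Diffw(M)$ is a smooth Lie group and their computations may be copied verbatim. Finally, the tangent space at the identity is $T_{\id}\Diffw(M) = \Gamma^\omega_\R(\id_M^*TM) = \vectw{M}$, and a routine computation of the bracket in the canonical chart $\Phi_{\id}$ identifies it as the negative of the usual bracket of vector fields. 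For the exponential map I would invoke that a real analytic vector field $X$ on the compact manifold $M$ possesses a real analytic global flow whose time-$1$ map $\Fl^X_1$ lies in $\Diffw(M)$; smoothness of $X \mapsto \Fl^X_1$ then follows from smooth parameter dependence of flows, reproducing \cite[\S 43]{KM97}.

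I expect the openness step to be the sole genuinely new point, and the only delicate issue is precisely the one noted in the footnote: one must not compare the identification topology with the compact open $C^\infty_\R$-topology directly, but instead derive openness from the continuity of $\iota$ furnished by Proposition \ref{prop: cinftytop}. Everything else is a faithful copy of the convenient-setting proof, legitimised by the fact that Keller smoothness and convenient smoothness agree on the Silva space $\vectw{M}$.
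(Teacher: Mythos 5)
Your proposal is correct and follows essentially the same route as the paper: openness of $\Diffw(M)$ is derived from the continuity of the canonical inclusion $\iota$ (Proposition \ref{prop: cinftytop}) together with the identity $\Diffw(M)=\iota^{-1}(\Diff^\infty(M))$ and Michor's openness of $\Diff^\infty(M)$, and the remaining assertions are transferred verbatim from the convenient-setting proof via the coincidence of Keller and convenient smoothness on Silva spaces. The only difference is that you spell out the reverse inclusion $\iota^{-1}(\Diff^\infty(M))\subseteq\Diffw(M)$ via the real analytic inverse function theorem, a detail the paper leaves implicit.
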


\begin{proof}
 Since $M$ is compact and thus finite dimensional,  Proposition \ref{prop: cinftytop} shows that the canonical inclusion $\iota \colon C^\omega_\R (M,M) \rightarrow C^\infty_\R (M,M)$ is real analytic. 
 Hence $\iota$ is continuous and we have $\Diff^\omega (M)= \iota^{-1} (\Diff^\infty (M))$.
 Now by \cite[Theorem 11.11]{Michor1980} $\Diff^\infty (M)$ is an open submanifold of $C^\infty_\R (M,M)$, whence $\Diff^\omega (M)$ is open in $C^\omega_\R (M,M)$.
 The rest of the proof can be copied verbatim from \cite[Theorem 43.4]{KM97}. 
\end{proof}

\begin{rem}\begin{enumerate}
            \item As shown in \cite[Theorem 43.3]{KM97} the Lie group $\Diffw (M)$ is even a real analytic Lie group in the convenient sense.
            Beyond the Fr\'{e}chet-setting (e.g.\ for Silva spaces) our notion of analyticity is inequivalent to the notion of convenient real analyticity.
            In Section \ref{sect: S1} it will turn out that the Lie group $\Diffw(M)$ is not real analytic in our sense. 
            \item The topology constructed on $\vectw{M} = \Gamma^\omega_\R (\id_M^* TM)$ coincides with the ``Van Howe''-topology constructed in \cite[\S 4]{Leslie} (this follows from \cite[Lemma 4.1]{Leslie}). 
            Thus the Lie group $\Diff^\omega (M)$ is modelled on the same topological vector space as the Lie group constructed in \cite{Leslie}. 
            Although the construction in \cite{Leslie} of the Lie group structure is flawed, the Lie group structure obtained in Proposition \ref{prop: diffomega} is precisely the one described in \cite{Leslie}.
           \end{enumerate} 
\end{rem}

\newpage
\section{Regularity of the group of analytic diffeomorphisms}

In this section we prove Theorem B, i.e.\ that $\Diff^\omega (M)$ is a regular Lie group.
This result is new in our setting (see \cite[Theorem 43.4]{KM97} for the corresponding result in the convenient setting) and the proof is based on H.\ Gl{\"o}ckner's regularity theorem for Silva Lie groups.

Throughout this section, let $M$ be a fixed compact $C^\omega_\R$-manifold.
Let us first study the differential equation for $C^k$-regularity of $\Diff^\omega (M)$. 

\begin{setup}[The differential equation of regularity for $\Diffw (M)$] \label{setup: reginDiff}
 Fix $\gamma \in C^k([0,1], \vectw{M})$. 
 If the evolution of $\gamma$ exists, it is a $C^{k+1}$-curve $\eta \colon [0,1] \rightarrow \Diff^\omega (M)$ which solves the differential equation
 \begin{equation}\label{eq: ode:reg}
  \begin{cases}
  \eta' (t) &= \gamma (t) .\eta (t) = T_1\rho_{\eta (t)} (\gamma (t)) \\
  \eta (0)  &= \id_M
  \end{cases}
 \end{equation}
 Recall that the right translation by an element in the group $\Diff^\omega (M)$ is precomposition with this element. 
 Identifying the tangent spaces $T_f \Diff^\omega (M) = \Gamma^\omega (f^* TM)$ we derive that $T_1 \rho_f (X) = X \circ f$ (this follows easily from a direct computation, using that the point evaluations on $\Gamma^\omega (f^* TM)$ are continuous linear and separate the points). 

 Hence the differential equation \eqref{eq: ode:reg} becomes
 \begin{equation}\label{eq: diff:Flow}
   \begin{cases}
    \eta' (t) &= \gamma (t,\eta (t)) \\
    \eta (0) &= \id_M
   \end{cases}
 \end{equation}
 We can view \eqref{eq: diff:Flow} as a differential equation whose right hand side is given by the time dependent vector field $\gamma$. 
 Since $M$ is compact, it follows from the theory of ordinary differential equations that the flow $\Fl^\gamma_0 \colon [0,1] \times M \rightarrow M$ of \eqref{eq: diff:Flow} is defined on $[0,1] \times M$. 
 Furthermore, for each fixed $t \in [0,1]$ the map $\Fl^\gamma_0 (t,\cdot) \in \Diff^\omega (M)$ and $\Fl_0^\gamma (0,\cdot) = \id_M$.\footnote{cf.\ \cite[IV. \S 2]{Lang95}. These results will also later be obtained as a consequence of the proof of regularity.} 
 We conclude that the evolution of $\gamma$ is $(\Fl^\gamma_0)^\vee (t) \coloneq \Fl_0^\gamma (t,\cdot) \in \Diff^\omega(M)$. 
 However, at this point it is not clear whether $(\Fl^\gamma_0)^\vee \colon [0,1] \rightarrow \Diff^\omega (M)$ is $C^{k+1}$.
 
 Thus $\Diff^\omega (M)$ will be a $C^k$-regular Lie group if we can show that $(\Fl^\gamma_0)^\vee$ is a $C^{k+1}$-curve and the evolution  
  \begin{displaymath}
   \evol \colon C^k ([0,1] ,\vectw{M}) \rightarrow \Diffw (M), X \mapsto (\Fl^X_0)^\vee (1) = \Fl^X_0 (1,\cdot).
  \end{displaymath}
  is smooth. Let us prepare the proof of these results with some auxiliary considerations.
\end{setup}

\begin{setup}
 As $M$ is a finite dimensional manifold, it admits a complexification $ M_\C$ which can be chosen to be a Stein manifold (see \cite[\S 3]{MR0098847}).
 The Stein manifold $ M_\C$ can be embedded as a closed complex submanifold of a finite dimensional complex space $\C^N$ by \cite[VII C.Theorem 13]{MR0180696}. 
 By \cite[Corollary 1]{Siu1976}, this complex submanifold admits an open neighbourhood $W\subseteq\C^N$ and an open holomorphic retract $\smfunc{ q}{W}{ M_\C}$.
 In the following, we will identify the tangent bundles $TM$ and $T M_\C$ as submanifolds of $T\C^N=\C^N\times\C^N$.
\end{setup}

\begin{setup}\label{setup: ext:diffeo}
 Recall that by \ref{rem: cov}(a), the real manifold $M$ admits a real-analytic local addition $\smfunc{\A}{\Omega_\A}{M}$, whence $\smfunc{(\pi_{TM},\A)}{\Omega_\A}{M\times M}$ is a $C^\omega_\R$-diffeomorphism onto its open image $(\pi_{TM},\A)(\Omega_\A)\subseteq  M\times M$. 
 Extend $(\pi_{T M},\A)^{-1}$ to a holomorphic map $\smfunc{h}{V_{\A}}{\C^N\times\C^N}$ on an open neighbourhood $V_{\A}\subseteq W\times W$ of the diagonal $\Delta_M$, i.e.~the following diagram commutes:
 \[
  \xymatrix{ 				 										
	\Delta_M \ar[r]^-\subseteq	&	(\pi_{TM},\A)(\Omega_\A) \ar[d]_\subseteq \ar[rr]_-{\cong}^-{(\pi_{T M},\A)^{-1}}	& & 		\Omega_\A	 \ar[r]^\subseteq \ar[d]_\subseteq & T M	\\
					&	V_{\A} 	  \ar[rr]^h				& &		\C^N\times\C^N &
}
 \]
 The set $V_{\A}\subseteq W\times W$ is an open neighbourhood of the compact set $\Delta_M$. Hence there is an $r>0$ such that $\Delta_M+ (\oBallin{r}{\C^N}{0}\times\oBallin{r}{\C^N}{0})\subseteq V_{\A}$.

 Here, $\oBallin{r}{\C^N}{0}$ denotes the open $r$-ball in $\C^N$ with respect to some norm. By replacing this norm by a multiple, we may assume in the following that $r=1$, i.e.
 \[
  \Delta_M + (\oBallin{1}{\C^N}{0}\times\oBallin{1}{\C^N}{0})\subseteq  V_{\A}.
 \]
 In particular, this implies that $ M+\oBallin{1}{\C^N}{0}\subseteq  W$.
\end{setup}

\begin{setup}\label{setup: sp:sets}
 We consider the following system of fundamental neighbourhoods of the compact set $M$ in the space $ W\subseteq\C^N$:
 \[
  U_{n}\coloneq M + \oBallin{1/n}{\C^N}{0}\subseteq  W\subseteq\C^N,
 \]
 together with the corresponding complex Banach spaces:
 \[
  F_{n}\coloneq\left( \BHol(U_{n},\C^N),\supnorm{\cdot} \right).
 \]
 Since $\smfunc{ q}{ W}{ M_\C}$ is open, the sets $ q(U_n)\subseteq  M_\C$ form a system of fundamental neighbourhoods of $ M$ in $ M_\C$.
 For $n\in\N$, we consider the \emph{real} Banach space
 \[
  E_n\coloneq\left( \setm{f\in \Holb(q(U_n) , \C^N)    }{f(a)\in T_a M \hbox{ for all } a \in  M}		,\supnorm{\cdot} \right)
 \]
 which can embeds via $\func{\theta_n}{E_n }{F_{n}}{g}{g\circ  q}$ isometrically into the complex Banach space $F_{n}$. We remark, that the maps $\theta_n$ induce isometric embeddings 
 \begin{displaymath}
  C([0,1], \theta_n) \colon C([0,1], E_n) \rightarrow C([0,1], F_{n}), \gamma \mapsto \theta_n \circ \gamma.
 \end{displaymath}
 Furthermore, there is a natural injective linear map $\nnfunc{E_{n}}{E_{n+1}}{f}{f|_{q(U_{n+1})}}$, which is a compact operator by \cite[Theorem 3.4]{KM90}.
 Hence, the direct limit of the sequence $(E_{n})_{n \in \N}$ is a Silva space which we identify with the Silva space $\vectw{M}$. 
 Using this identification, the limit maps of the inductive limit are 
 \[
  j_n \colon E_{n} \rightarrow \lim_{\to} E_{k} \cong \vectw{M}, f \mapsto (\id_M, f|_M).
 \]
\end{setup}

To prove the regularity of $\Diff^\omega (M)$ we exploit  Gl{\"o}ckners theorem for regularity of Silva Lie groups (\cite[Theorem 15.5]{1208.0715v3}) whose key point we repeat in the following Lemma.

\begin{lem}												\label{lem: star}
 The Lie group $\Diff^\omega (M)$ is $C^1$-regular if the following is satisfied:
 \begin{displaymath}
    (\star) \begin{cases} 
        \text{For each $n \in \N$ there is an open $0$-neighbourhood $P_n \subseteq C([0,1],E_n)$} \\ 
	\text{such that for $\gamma \in P_n$ the map $(\Fl^\gamma_0)^\vee \colon [0,1] \rightarrow \Diff^\omega (M), t \mapsto \Fl_0^\gamma (t,\cdot)$ is $C^1$}\\
	\text{and }\evol \colon P_n \rightarrow \Diff^\omega (M), \gamma \mapsto \Fl^\gamma_0 (1,\cdot) \text{ is continuous.}
        \end{cases}
 \end{displaymath}
\end{lem}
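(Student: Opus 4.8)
The plan is to obtain the statement as an instance of Gl\"ockner's regularity theorem for Silva Lie groups \cite[Theorem 15.5]{1208.0715v3}: I would set up the dictionary between the condition $(\star)$ and the hypotheses of that theorem and then invoke it. By \ref{setup: sp:sets} the model space $\vectw{M}$ is the Silva space $\varinjlim E_n$ with compact linking maps, so $\Diff^\omega (M)$ is a Silva Lie group, and the content of the theorem is that its $C^1$-regularity can be read off from data living on the individual Banach steps $E_n$ in the weak (merely continuous, and only local near $0$) form recorded in $(\star)$.

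Two mechanisms drive the reduction. First, Silva spaces are compactly regular: every compact subset of $\varinjlim E_n$ sits inside, and is compact in, some step $E_n$. Hence any continuous --- in particular any $C^1$ --- curve $\gamma \colon [0,1] \to \vectw{M}$ factors, with the same regularity, through a fixed $E_n$, which is where $(\star)$ applies. Second, to remove the restriction to the $0$-neighbourhood $P_n$ I would use the flow (cocycle) property of $\Fl^\gamma$ together with a rescaling: subdividing $[0,1]$ into $N$ equal pieces and reparametrising each piece to $[0,1]$ rescales the time variable by $N$, hence replaces $\gamma$ by the curves $\tilde\gamma_j (s) = \tfrac1N\, \gamma\!\left(\tfrac{j+s}{N}\right)$ of sup-norm $\tfrac1N \supnorm{\gamma}$. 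For $N$ large these lie in $P_n$, so $(\star)$ furnishes each $(\Fl^{\tilde\gamma_j}_0)^\vee$ as a $C^1$-curve with $\evol (\tilde\gamma_j)$ depending continuously on $\tilde\gamma_j$; writing $\Fl^\gamma_0 (1,\cdot) = \Fl^{\tilde\gamma_{N-1}}_0 (1,\cdot) \circ \cdots \circ \Fl^{\tilde\gamma_0}_0 (1,\cdot)$ and using that composition in $\Diff^\omega (M)$ is smooth (Proposition \ref{prop: diffomega}) yields global existence of the evolution together with continuity of $\evol$ on all of $C^1 ([0,1], \vectw{M})$. The regularity of the solution is then bootstrapped from the equation itself: if $\gamma$ is $C^1$ and $\eta = \Evol (\gamma)$ is $C^1$, then the right-hand side $t \mapsto \gamma (t).\eta (t)$ of the evolution equation is $C^1$, being a composite of $C^1$-maps with the smooth multiplication, so $\eta'$ is $C^1$ and $\eta$ is $C^2$, as $C^1$-regularity demands.

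The main obstacle is the last step required by the definition of $C^1$-regularity, namely upgrading the continuous $\evol$ to a smooth map; this is not formal and is the substance of \cite[Theorem 15.5]{1208.0715v3}. It rests on representing $\evol$ through the group product and a product integral and on smooth parameter dependence of the flow, so as to bootstrap continuity in the curve to smoothness. I would not reprove this machinery but cite it; the role of the present Lemma is precisely to package its hypotheses, reducing the $C^1$-regularity of $\Diff^\omega (M)$ to the stepwise, Banach-space condition $(\star)$, which is then verified in the sequel.
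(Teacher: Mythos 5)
Your overall strategy coincides with the paper's: the lemma is proved by invoking Gl\"ockner's regularity theorem for Silva Lie groups \cite[Theorem 15.5]{1208.0715v3}, and the substance is to match $(\star)$ with its hypotheses. However, you verify only one of the two hypotheses of that theorem. Besides requiring that $G$ be modelled on a Silva space $\varinjlim E_n$, Theorem 15.5 requires the existence of a Lie group morphism from $G$ into some $C^0$-regular Lie group which separates points (equivalently, an injective such morphism); this is what guarantees uniqueness of evolutions and makes the local-to-global gluing you sketch legitimate. Your proposal never supplies such a morphism. The paper does: by Proposition \ref{prop: cinftytop} the canonical inclusion $\Diff^\omega (M) \rightarrow \Diff (M)$ is a point-separating morphism of Lie groups, and $\Diff (M)$ is $C^0$-regular by \cite[Corollary 13.7 (a)]{1208.0715v3}. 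Without this step the citation of Theorem 15.5 does not go through, so this is a genuine gap rather than a stylistic omission. Relatedly, the subdivision/rescaling and bootstrap arguments you describe are internal to Gl\"ockner's proof and need not (and should not) be reproduced; what does need saying, and what the paper says via \ref{setup: reginDiff}, is that the flow $(\Fl^\gamma_0)^\vee$ actually solves the differential equation defining the evolution, so that the $C^1$-curve produced by $(\star)$ really is $\Evol(\gamma)$. You gesture at this but do not state it as a step to be checked.
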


\begin{proof}
 Proposition \ref{prop: cinftytop} shows that the canonical inclusion $\Diff^\omega (M) \rightarrow \Diff (M)$ is a morphism of Lie groups which separates the points.
 Now $\Diff (M)$ is a $C^0$-regular Lie group by \cite[Corollary 13.7 (a)]{1208.0715v3} and $\Diff^\omega (M)$ is modelled on the Silva space $\vectw{M} = \lim_{\to} E_{n}$.
 Hence \cite[Theorem 15.5]{1208.0715v3} yields that $\Diff^\omega (M)$ will be $C^1$-regular if we can show that each element in $P_n$ admits a $C^1$-evolution and $\evol$ is continuous. 
 However, in \ref{setup: reginDiff} we have seen that for a (time dependent) real analytic vector field the flow solves the differential equation associated to regularity. 
 Thus if the flow induces a $C^1$-map $(\Fl^\gamma_0)^\vee$, this mapping is the evolution of $\gamma \in P_n$.
 We conclude that $\Diff^\omega (M)$ will be $C^1$-regular if the condition $(\star)$ is satisfied.
\end{proof}

Before we can establish $(\star)$ from Lemma \ref{lem: star}, we recall the definition of $C^{r,s}$-mappings from \cite{AS2014}.
\begin{defn}
 Let $E_1$, $E_2$ and $F$ be locally convex spaces, $U$ and $V$ open subsets of
 $E_1$ and $E_2$, respectively, and $r,s \in \N_0 \cup \{\infty\}$.
 \begin{enumerate}
  \item A mapping
 $f\colon U \times V \rightarrow F$ is called a $C^{r,s}$-map if for all
 $i,j \in \N_0$ such that $i \leq r, j \leq s$, the iterated directional
 derivative
 \begin{displaymath}
  d^{(i,j)}f(x,y,w_1,\dots,w_i,v_1,\dots,v_j) \coloneq (D_{(w_i,0)} \cdots
  D_{(w_1,0)}D_{(0,v_j)} \cdots D_{(0,v_1)}f ) (x,y)
 \end{displaymath}
 exists for all
 $ x \in U, y \in V, w_1, \ldots , w_i \in E_1,  v_1, \ldots ,v_j \in E_2$ and
 yields continuous maps
 \begin{align*} 
  d^{(i,j)}f\colon    U \times V \times E^i_1 \times E^j_2 &\rightarrow F,\\ 
  (x,y,w_1,\dots,w_i,v_1,\dots,v_j)&\mapsto (D_{(w_i,0)} \cdots D_{(w_1,0)}D_{(0,v_j)} \cdots D_{(0,v_1)}f ) (x,y).
 \end{align*}
 \item In (a) all spaces $E_1,E_2$ and $F$ were assumed to be modelled over the same $\K \in \{\R,\C\}$.
 By \cite[Remark 4.10]{AS2014} we can instead assume that $E_1$ is a locally convex space over $\R$ and $E_2,F$ are locally convex spaces over $\C$. 
 Then a map $f \colon U \rightarrow F$ is a $C^{r,s}_{\R,\C}$-map if the iterated differentials $d^{(i,j)}f$ (as in (a)) exist for all $0 \leq i \leq r , 0 \leq j \leq s$ and are continuous.
 Here the derivatives in the first component are taken with respect to $\R$ and in the second component with respect to $\C$.
 \end{enumerate}
\end{defn}

One can extend the definition of $C^{r,s}$- and $C^{r,s}_{\R,\C}$-maps to obtain $C^{r,s}$- or $C^{r,s}_{\R,\C}$-mappings on closed intervals (see \cite[Definition 3.2]{AS2014} for the general case of locally convex domains with dense interior).
For further results and details on the calculus of $C^{r,s}$-maps we refer to \cite{AS2014}.

With the help of the calculus of $C^{r,s}_{\R,\C}$-mappings at hand we can now establish condition $(\star)$ from Lemma \ref{lem: star}. 
To this end, let $n\in\N$ be fixed and set
\begin{displaymath}
 P_n \coloneq \oBallin{\frac{1}{4n}}{C([0,1],E_n)}{0} , \quad \quad  Q_n \coloneq \oBallin{\frac{1}{4n}}{C([0,1],F_{n} )}{0}.
\end{displaymath}

\begin{setup}\label{setup: fODE}
 Consider the map 
\begin{displaymath}
 \func{f}{[0,1] \times U_{n} \times Q_n}{\C^N}{(t,x,\gamma)}{\gamma (t) (x)}.
\end{displaymath}
We can rewrite $f$ as $f(t,x,\gamma) = \ev (\ev_1 (\gamma ,t) ,x)$ where $\ev_1 \colon C([0,1] , F_{n}) \times [0,1]\rightarrow F_{n}$ and $\ev \colon  \Holb(U_{n} , \C^N) \times U_{n} \rightarrow \C^N$ are the canonical evaluation maps. 
Note that the inclusion $ \Holb(U_{n} , \C^N) \rightarrow \Hol (U_{n} , \C^N)$ is continuous linear, whence holomorphic. 
We conclude from \cite[Proposition 3.20]{AS2014} that $\ev_1$ is a $C^{0,\infty}_{\R,\C}$-map and $\ev$ is holomorphic.
Thus the chain rule \cite[Lemma 3.17]{AS2014} implies that $f$ is of class $C^{0,\infty}_{\R,\C}$.
\end{setup}

Let us now consider the initial value problem 
 \begin{equation}\label{eq: DGL on UU4n}
  \begin{cases}
   x'(t) &	= 	f(t,x(t),\gamma), \\
   x(0)  &	=	x_0, \quad \text{for }x_0 \in U_{n} .
  \end{cases}
 \end{equation}
where the right hand side is given by $f$ from \ref{setup: fODE}.

\begin{lem}\label{lem: ODE:glob}
 Let $\gamma\in Q_n$. For every point $x_0\in U_{4n}$ the initial value problem (\ref{eq: DGL on UU4n})
 admits a unique maximal solution defined on the whole interval $[0,1]$. This solution takes its values in the set $U_{2n}$.
\end{lem}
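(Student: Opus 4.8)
The plan is to read \eqref{eq: DGL on UU4n} as an ordinary differential equation in the finite-dimensional space $\C^N$ with a time-dependent right-hand side, and to upgrade local Picard--Lindel\"of existence to a global solution by means of an a priori estimate forced by the smallness of $\gamma \in Q_n$. First I would record local existence and uniqueness. For fixed $\gamma$, the map $(t,x) \mapsto f(t,x,\gamma)$ is, by \ref{setup: fODE}, continuous in $t$ and holomorphic in $x$ on the open set $U_{n} \subseteq \C^N$; holomorphy makes it locally Lipschitz in $x$, so the classical theory (\cite{Lang95}) yields a unique maximal solution $x \colon [0,T) \to U_{n}$ of \eqref{eq: DGL on UU4n} with $x(0) = x_0$, where $T \in (0,1]$ is maximal. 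By maximality, if $T < 1$ then $x$ must eventually leave every compact subset of $U_{n}$.

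The key step is the a priori bound. Since $\gamma \in Q_n$, the number $c \coloneq \sup_{t \in [0,1]} \supnorm{\gamma(t)}$ satisfies $c < \tfrac{1}{4n}$, and because $\supnorm{\gamma(t)} = \sup_{z \in U_{n}}\abs{\gamma(t)(z)}$ we get $\abs{f(t,z,\gamma)} \le c$ for all $(t,z) \in [0,1] \times U_{n}$. I would then run a connectedness argument on the relatively open set $\setm{t \in [0,T)}{x(t) \in U_{2n}}$, which contains $0$ as $x_0 \in U_{4n} \subseteq U_{2n}$. Writing $[0,\tau)$ for its connected component of $0$, on this interval $x(t) \in U_{2n} \subseteq U_{n}$, hence $\abs{x'(t)} \le c$ and $\abs{x(t) - x_0} \le \int_0^t \abs{x'(s)}\,\di s \le c < \tfrac{1}{4n}$. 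Together with $\operatorname{dist}(x_0,M) < \tfrac{1}{4n}$ (valid since $x_0 \in U_{4n}$) this yields $\operatorname{dist}(x(t),M) < \tfrac{1}{2n}$, so $x(t) \in U_{2n}$ even in the limit $t \to \tau$. If $\tau < T$ this contradicts $x(\tau) \in \partial U_{2n}$; therefore $\tau = T$ and the solution stays inside $U_{2n}$ on all of $[0,T)$.

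Finally I would deduce global existence on the closed interval. Because $M$ is compact, the trajectory is contained in the compact set $M + \cBallin{1/(2n)}{\C^N}{0} \subseteq U_{n}$ (using $1/(2n) < 1/n$), so the escape alternative forces $T = 1$. Moreover the uniform bound $\abs{x'} \le c$ makes $x$ Lipschitz and hence uniformly continuous, so $\lim_{t \to 1} x(t)$ exists in $U_{n}$ and extends $x$ to a (still unique) solution on $[0,1]$; applying the estimate of the previous paragraph once more at $t=1$ keeps the value in $U_{2n}$. The argument is essentially the standard continuation scheme; the only points requiring care are the strict inequalities in the a priori estimate and the compact-containment (escape) step, which is precisely where compactness of $M$ enters.
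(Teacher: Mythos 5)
Your proposal is correct and follows essentially the same route as the paper: both rest on the integral estimate $\abs{x(t)-x_0}\le\int_0^t\supnorm{\gamma(s)}\,\di s<\tfrac{1}{4n}$, the triangle inequality to confine the trajectory to $U_{2n}$ (hence to a compact subset of $U_n$), and the standard escape/continuation alternative to get a global solution on $[0,1]$. The paper simply cites \cite[Theorem 5.6]{AS2014} and \cite[IV \S 2 Theorem 2.3]{Lang95} where you spell out Picard--Lindel\"of and the connectedness argument by hand (the latter is in fact superfluous, since the bound $\abs{x'(t)}\le\supnorm{\gamma(t)}$ already holds wherever the solution is defined, its values lying in $U_n$ by definition).
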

\begin{proof}
 The map $f$ from \ref{setup: fODE} is of class $C^{0,\infty}_{\R,\C}$ with respect to $[0,1] \times \left( U_{n} \times Q_n \right)$. 
 Thus (\ref{eq: DGL on UU4n}) admits a unique maximal solution $\varphi_{0,x_0,\gamma}$ by \cite[Theorem 5.6]{AS2014}.
 We will now show that this solution takes its values in the compact set $\overline{U_{2n}}$ and hence will be globally defined, i.e.~ it is defined on $[0,1]$ (cf.\ \cite[IV \S 2 Theorem 2.3]{Lang95}). 
 To this end, let $t$ be an element in the domain of $\varphi_{0,x_0,\gamma}$. Then
 \begin{align*}
  \norm{ \varphi_{0,x_0,\gamma}(t) - x_0 }		& 	=			\norm{ \varphi_{0,x_0,\gamma}(t) - \varphi_{0,x_0,\gamma}(0)  }																					    		=			\norm{ \int_0^t \varphi_{0,x_0,\gamma}'(s)  \ ds	}	\\	
							& 	= 		\norm{ \int_0^t \gamma(s) \left(	\varphi_{0,x_0,\gamma}(s)	\right) \ ds	} 														\leq		\int_0^t \underbrace{\supnorm{  \gamma(s)}}_{<1/(4n)} \ ds			   												 	  < 		\frac{1}{4n}  .
 \end{align*}
 Since $x_0\in U_{4n}$, the triangle inequality implies that $\varphi_{0,x_0,\gamma}(t)\in U_{2n}\subseteq \overline{U_{2n}}$.
\end{proof}

\begin{prop}\label{prop: Evol:ex}
 For the time-dependent vector field $\gamma \in P_n$ the flow induces a $C^1$-map $(\Fl^\gamma_0)^\vee \colon [0,1] \rightarrow \Diff^\omega (M), t \mapsto \Fl^\gamma_0 (t,\cdot)$.
 Hence, $\Evol (\gamma) = (\Fl^\gamma_0)^\vee$ for $\gamma \in P_n$
\end{prop}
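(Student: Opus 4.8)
The plan is to verify the $C^1$-property of $(\Fl^\gamma_0)^\vee$ locally in the time variable, working in a canonical chart of $C^\omega_\R(M,M)$ taken along the flow curve, and then to deduce $\Evol(\gamma)=(\Fl^\gamma_0)^\vee$ from \ref{setup: reginDiff}. Since being a $C^1$-curve is a local property of the parameter $t$, it suffices to show that $(\Fl^\gamma_0)^\vee$ is $C^1$ in a neighbourhood of each $t_0\in[0,1]$.

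First I would extract the regularity of the flow as a Banach-space-valued curve. Fix $\gamma\in P_n$ and regard it, via the isometric embedding $\theta_n$, as an element of $Q_n$. By Lemma \ref{lem: ODE:glob} every $x_0\in U_{4n}$ yields a solution $\varphi_{0,x_0,\gamma}\colon[0,1]\to U_{2n}$ of \eqref{eq: DGL on UU4n}, and since the right-hand side $f$ from \ref{setup: fODE} is of class $C^{0,\infty}_{\R,\C}$, the theory of $C^{r,s}$-differential equations in \cite{AS2014} shows that for each $t$ the map $\Phi_t\colon U_{4n}\to U_{2n}$, $x_0\mapsto\varphi_{0,x_0,\gamma}(t)$, is holomorphic. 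As the vector field $\gamma(t)$ is tangent to $M$, its flow preserves $M$; hence $\Phi_t$ restricts to $\Fl^\gamma_0(t,\cdot)\in\Diff^\omega(M)$, of which it is a holomorphic extension. Differentiating the integral form of \eqref{eq: DGL on UU4n} I expect to obtain that the curve $t\mapsto\Phi_t$ is $C^1$ into the Banach space $F_{4n}=\BHol(U_{4n},\C^N)$, with $\partial_t\Phi_t=\bigl[x\mapsto\gamma(t)(\Phi_t(x))\bigr]$; the point is that this derivative depends continuously on $t$ in the supremum norm, which follows from the uniform estimate of Lemma \ref{lem: ODE:glob} together with the uniform continuity of $\gamma$ and of $t\mapsto\Phi_t$ on the relatively compact set $U_{2n}$.

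Next I would localise at $t_0$. Put $f\coloneq\Fl^\gamma_0(t_0,\cdot)$ and use the canonical chart $\Phi_f$ from \ref{setup: charts}, whose model is a step of the Silva space $\Gamma^\omega_\C((f^*TM)_\C|M)$. For $t$ close to $t_0$ the supremum-norm bound of the previous paragraph keeps the pair $(\Phi_{t_0}(x),\Phi_t(x))$ inside the neighbourhood $V_{\A}$ of the diagonal $\Delta_M$ on which the holomorphic extension $h$ of $(\pi_{TM},\A)^{-1}$ from \ref{setup: ext:diffeo} is defined. Therefore the chart representative of $\Fl^\gamma_0(t,\cdot)$ is represented by the holomorphic germ $\tilde\sigma_t\coloneq h\circ(\Phi_{t_0},\Phi_t)$, an element of a fixed Banach step of $\Gamma^\omega_\C((f^*TM)_\C|M)$. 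Since $t\mapsto(\Phi_{t_0},\Phi_t)$ is a $C^1$-curve into $\BHol(U_{4n},\C^N\times\C^N)$ and the pushforward $h_*$ is a $C^\infty_\C$-map on the relevant Banach open set by \cite[Proposition 3.3]{hg2004b}, the chain rule shows that $t\mapsto\tilde\sigma_t$ is $C^1$; composing with the continuous linear inclusion of the Banach step into the model space and restricting to the real subspace $\Gamma^\omega_\R(f^*TM)$, I conclude that $\Phi_f\circ(\Fl^\gamma_0)^\vee$ is $C^1$ near $t_0$. As $t_0$ was arbitrary, $(\Fl^\gamma_0)^\vee$ is $C^1$ on $[0,1]$, and since \ref{setup: reginDiff} already identifies the flow as the unique solution of the regularity equation \eqref{eq: ode:reg}, this gives $\Evol(\gamma)=(\Fl^\gamma_0)^\vee$.

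The main obstacle I anticipate is exactly the passage from the classical parameter dependence of the flow to $C^1$-regularity of the induced curve into the infinite-dimensional manifold. Concretely, one must control the $t$-dependence of the holomorphic flow extension uniformly in the supremum norm of one fixed Banach step, check that composing with the extended inverse local addition $h$ leaves the resulting germ inside a single step — this is where the compactness of $M$ and the uniform displacement bound of Lemma \ref{lem: ODE:glob} are essential — and only then invoke the smoothness of the pushforward operator so that the $C^{r,s}$-chain rule applies. The final descent from the Banach step to the Silva space $\vectw{M}$ is then harmless, the limit maps $j_n$ being continuous and linear.
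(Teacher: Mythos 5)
You follow the same core strategy as the paper's proof: extend the flow holomorphically to the tube $U_{4n}$ via Lemma \ref{lem: ODE:glob}, exploit the displacement bound $\norm{\Fl^f_0(t,x,\tilde\gamma)-x}<1/(4n)$ to stay inside the domain $V_\A$ of the holomorphic extension $h$ of $(\pi_{TM},\A)^{-1}$, push forward with $h$, and descend to $\vectw{M}$ through a Banach step. You execute two steps differently, though. First, you localise in $t$ and work in moving charts $\Phi_f$ with $f=\Fl^\gamma_0(t_0,\cdot)$; the paper instead notices that the displacement bound already keeps $(x,\Fl^f_0(t,x,\tilde\gamma))$ in $\Delta_M+\oBallin{1}{\C^N}{0}\times\oBallin{1}{\C^N}{0}\subseteq V_\A$ for \emph{every} $t\in[0,1]$, so the single chart $\Phi_{\id_M}$ suffices for the whole interval. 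Your localisation is harmless but buys nothing, and it costs the extra bookkeeping of realising $\Gamma^\omega_\R(f^*TM)$ as a limit of Banach steps of $\C^N$-valued functions, an identification the paper only sets up for $f=\id_M$ in \ref{setup: sp:sets}. Second, where you propose to obtain the $C^1$-dependence of $t\mapsto\Phi_t$ in the supremum norm by differentiating the integral equation by hand, the paper gets it from \cite[Proposition 5.9]{AS2014} (the flow is $C^{1,\infty}_{\R,\C}$ jointly in $(t,x)$ and the parameter) followed by the exponential law \cite[Theorem 3.28]{AS2014}, which yields a $C^1_\R$-curve into $\Hol(U_{4n},\C^N)$ and hence, via the continuous restriction map, into $F_{6n}$. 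Your direct estimate does close: uniform differentiability and continuity of $t\mapsto\tilde\gamma(t)\circ\Phi_t$ in $\supnorm{\cdot}$ follow from $\supnorm{\tilde\gamma(t)}<1/(4n)$ together with a Cauchy estimate giving a uniform Lipschitz constant for $\tilde\gamma(t)$ on $\overline{U_{2n}}\subseteq U_n$; but this is the one step of your sketch that genuinely has to be written out in full, and it is precisely what the $C^{r,s}$-machinery is cited to avoid.
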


\begin{proof}
 Fix $\gamma \in P_n$.
 We use the holomorphic retract $q \colon  W \rightarrow  M_\C$ to lift $\gamma$ to an element $\tilde{\gamma} \coloneq C([0,1], \theta_n) (\gamma) \in Q_n$. 
 Note that by definition this map is given by $\tilde{\gamma} (t) (x) = \gamma (t) (q(x))$ for all $(t,x) \in [0,1] \times U_{n}$.
 
 Then the flow $\Fl^f_0 (\cdot,\tilde{\gamma}) \colon [0,1] \times U_{4n} \rightarrow U_{2n}$ of \eqref{eq: DGL on UU4n} exists for $\tilde{\gamma}$ by Lemma \ref{lem: ODE:glob} and yields a $C^{1,\infty}_{\R,\C}$-map by \cite[Proposition 5.9]{AS2014}. 
 The definition of the mapping $f$ shows that on $ M \subseteq U_{4n}$ we have 
  \begin{equation}\label{eq: Flow:prop}
   \Fl^f_0 (\cdot ,\tilde{\gamma})|_{[0,1] \times M} = \Fl^\gamma_0 \colon [0,1]\times  M \rightarrow  M.
  \end{equation}
 Recall now that there is a holomorphic extension $h \colon  V_{\A} \rightarrow \C^N \times \C^N$ of the diffeomorphism $(\pi_{T M} ,\A)^{-1}$ such that $U_{2n} \times U_{2n} \subseteq  V_{\A}$ (cf.\ \ref{setup: ext:diffeo} and \ref{setup: sp:sets}).
 Now the chain rules \cite[Lemma 3.17, Lemma 3.18]{AS2014} yield that 
  \begin{displaymath}
   F_\gamma \colon [0,1] \times U_{4n} \rightarrow \C^N , (t, x) \mapsto \pr_2 (x, \pr_2 \circ h \circ (x,\Fl^f_0 (t,x, \tilde{\gamma}))
  \end{displaymath}
 is a mapping of class $C^{1,\infty}_{\R,\C}$. 
 Moreover, since $h|_{ M \times  M} = (\pi_{T M} , \A)^{-1}$ this map satisfies 
  \begin{equation}\label{eq: tang:prop}
   F_\gamma ([0,1] \times \{a \}) \subseteq T_a M \subseteq \C^N \text{ for each } a \in  M. 
  \end{equation}
 Apply now the exponential law \cite[Theorem 3.28]{AS2014} for $C^{1,\infty}_{\R,\C}$-mappings on finite-dimensional manifolds to obtain a $C^1_\R$-map 
  \begin{displaymath}
   F_\gamma^\vee \colon [0,1] \rightarrow \Hol (U_{4n}, \C^N), t \mapsto F_\gamma(t,\cdot) .
  \end{displaymath}
 As $M \subseteq \overline{U_{6n}} \subseteq U_{4n}$ is compact, restriction induces a continuous linear inclusion $I_n \colon \Hol (U_{4n} , \C^N) \rightarrow \Holb (U_{6n},\C^N)$.
 Thus $I_n \circ F_\gamma^\vee \colon [0,1] \rightarrow \Holb (U_{6n},\C^N) = F_{6n}$ is a $C^1_\R$-mapping whose image is contained in the closed real subspace $E_{6n}$ by \eqref{eq: tang:prop}.
 Composing this map with the limit map $j_{6n} \colon E_{6n} \rightarrow \vectw{M}$ (see \ref{setup: sp:sets}) we finally obtain a $C^1_\R$ mapping 
  \begin{displaymath}
   H_\gamma \colon [0,1] \rightarrow \vectw{M} , t \mapsto F_\gamma(t,\cdot)|_{ M}^{T M} \stackrel{\eqref{eq: Flow:prop}}{=} (\pi_{T M},\A)^{-1} \circ (\id_M , \Fl^\gamma_0 (t, \cdot)). 
   \end{displaymath}
 Recall that $\Phi_{\id_M} \colon \Diff^\omega ( M) \supseteq U_{\id_M} \rightarrow \vectw{ M}, g \mapsto (\pi_{T M},\A)^{-1} \circ (\id_M , g)$ is a chart for $\Diff^\omega (M)$. 
 By construction $H_\gamma$ is contained in the image of $\Phi_{\id_M}$ and thus $\Phi_{\id_M}^{-1} \circ H_\gamma = (\Fl^\gamma_0)^\vee $ is a $C^1_\R$-map.
\end{proof}

By Proposition \ref{prop: Evol:ex} we know that for a fixed curve $\gamma\in Q_n$ the evolution $\Evol(\gamma)$ exists and is given by $(\Fl^\gamma_0)^\vee$. To apply Lemma \ref{lem: star}, it remains to show that the endpoint of $\Evol(\gamma)(1)$ depends continuously on $\gamma$:
\begin{prop}										\label{prop: evol continuous}
 The map $\func{\evol}{P_n}{\Diffw(M)}{\gamma}{(\Fl^\gamma_0)^\vee (1)}$ is continuous.
\end{prop}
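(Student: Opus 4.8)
The plan is to verify continuity after composing with the canonical chart $\Phi_{\id_M}$ of $\Diff^\omega(M)$ around the identity. By the last line of the proof of Proposition~\ref{prop: Evol:ex}, for every $\gamma \in P_n$ the curve $H_\gamma$ takes values in the image of $\Phi_{\id_M}$ and $\Phi_{\id_M}^{-1}\circ H_\gamma = (\Fl^\gamma_0)^\vee$; in particular $\evol(\gamma) = (\Fl^\gamma_0)^\vee(1) = \Phi_{\id_M}^{-1}(H_\gamma(1)) \in U_{\id_M}$, so $\evol(P_n) \subseteq U_{\id_M}$. Since $\Phi_{\id_M}$ is a homeomorphism onto an open subset of $\vectw{M}$, it suffices to show that
\[
 P_n \rightarrow \vectw{M}, \quad \gamma \mapsto \Phi_{\id_M}(\evol(\gamma)) = H_\gamma(1)
\]
is continuous. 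The idea is to re-run the construction of $H_\gamma$ from the proof of Proposition~\ref{prop: Evol:ex}, but now keeping the datum as a genuine variable and tracking that every step depends continuously on it.

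First I would treat the flow as a map in all of its arguments. The right-hand side $f$ from \ref{setup: fODE} is defined and of class $C^{0,\infty}_{\R,\C}$ on $[0,1] \times U_{n} \times Q_n$, with the third slot a genuine (indeed complex-linear, hence holomorphic) variable. By Lemma~\ref{lem: ODE:glob} the flow $\Fl^f_0$ is defined on all of $[0,1] \times U_{4n} \times Q_n$ and takes values in $U_{2n}$, and the parametrised flow theorem \cite[Proposition 5.9]{AS2014} shows that $\Fl^f_0 \colon [0,1] \times U_{4n} \times Q_n \rightarrow U_{2n}$ is of class $C^{1,\infty}_{\R,\C}$, with $t$ the $\R$-variable and $(x,\zeta)$ the $\C$-variables. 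Composing with the holomorphic extension $h \colon V_\A \rightarrow \C^N \times \C^N$ from \ref{setup: ext:diffeo} (recall $U_{2n} \times U_{2n} \subseteq V_\A$) and the projections, the chain rules \cite[Lemma 3.17, Lemma 3.18]{AS2014} yield that
\[
 \hat{F} \colon [0,1] \times U_{4n} \times Q_n \rightarrow \C^N, \quad (t,x,\zeta) \mapsto \pr_2\bigl(x, \pr_2 \circ h\,(x, \Fl^f_0(t,x,\zeta))\bigr)
\]
is again $C^{1,\infty}_{\R,\C}$ and satisfies $\hat{F}(t,x,\tilde{\gamma}) = F_\gamma(t,x)$ for $\gamma \in P_n$, where $\tilde{\gamma}\coloneq C([0,1],\theta_n)(\gamma)$.

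Next I would apply the exponential law \cite[Theorem 3.28]{AS2014}, currying out the finite-dimensional complex variable $x \in U_{4n}$ while retaining $(t,\zeta)$, to obtain a $C^{1,\infty}_{\R,\C}$-map $\hat{F}^\vee \colon [0,1] \times Q_n \rightarrow \Hol(U_{4n},\C^N)$; evaluating at $t = 1$ gives a continuous map $Q_n \rightarrow \Hol(U_{4n},\C^N)$, $\zeta \mapsto \hat{F}^\vee(1,\zeta)$. Post-composing with the continuous linear inclusion $I_n \colon \Hol(U_{4n},\C^N) \rightarrow \Holb(U_{6n},\C^N) = F_{6n}$ and pre-composing with the isometric embedding $C([0,1],\theta_n) \colon P_n \rightarrow Q_n$ from \ref{setup: sp:sets}, the image lands by \eqref{eq: tang:prop} in the closed real subspace $E_{6n}$; composing with the continuous linear limit map $j_{6n} \colon E_{6n} \rightarrow \vectw{M}$ then reproduces exactly $\gamma \mapsto H_\gamma(1)$. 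As a composition of continuous maps this is continuous, so $\Phi_{\id_M} \circ \evol$ and hence $\evol$ is continuous on $P_n$.

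The main obstacle is the second step: one needs the joint $C^{1,\infty}_{\R,\C}$-regularity (in particular, continuity) of the flow in $(t,x,\zeta)$ including the infinite-dimensional complex Banach-space parameter $\zeta \in Q_n$, rather than merely for each fixed parameter as in Proposition~\ref{prop: Evol:ex}. This is precisely the content of the parametrised flow statement \cite[Proposition 5.9]{AS2014}, which applies because $f$ is a $C^{0,\infty}_{\R,\C}$-map on $[0,1] \times U_{n} \times Q_n$ with the parameter a bona fide variable. A secondary technical point is that the exponential law must be invoked so as to curry out only the $x$-variable, keeping $\zeta$ as a remaining holomorphic parameter; granting these, the remainder is a routine concatenation of continuous linear maps.
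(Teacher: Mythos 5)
Your argument is correct and shares its skeleton with the paper's proof (reduction via the chart $\Phi_{\id_M}$ to the map $\gamma\mapsto H_\gamma(1)$, the joint regularity of the parametrised flow from \cite[Proposition 5.9]{AS2014}, and the chain $I_n$, $E_{6n}$, $j_{6n}$), but it resolves the crucial step --- passing from pointwise dependence $H(a,\gamma)$ to continuity of $\gamma\mapsto H(\cdot,\gamma)$ into the Banach space $F_{6n}$ --- by a genuinely different device. The paper exploits that the jointly $C^\infty_\C$ map $H\colon U_{4n}\times Q_n\to\C^N$ is locally Lipschitz and invokes a uniform Lipschitz estimate (\cite[Propositions 6.3 and 6.4]{Amann}) on the compact set $\overline{U_{6n}}\times\{\gamma_0\}$ to conclude that $\gamma\mapsto H(\cdot,\gamma)$ is locally Lipschitz into $(\Holb(U_{6n},\C^N),\supnorm{\cdot})$; you instead curry out the locally compact variable $x$ via the exponential law to land in $\Hol(U_{4n},\C^N)$ with the compact open topology and then apply the continuous linear restriction $I_n$. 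Your route is arguably more economical (it reuses machinery already present in the proof of Proposition \ref{prop: Evol:ex} and avoids the Lipschitz estimates entirely), and it yields slightly more, namely holomorphy rather than mere continuity of $\Psi_Q$; the paper's route yields an explicit local Lipschitz estimate for $\evol$. One presentational caveat: the three-variable currying you describe, producing a $C^{1,\infty}_{\R,\C}$-map $[0,1]\times Q_n\to\Hol(U_{4n},\C^N)$, goes beyond the two-factor statement of \cite[Theorem 3.28]{AS2014} as literally formulated; since you only need the value at $t=1$, it is cleaner to first restrict to $t=1$ (so that $\hat F(1,\cdot,\cdot)\colon U_{4n}\times Q_n\to\C^N$ is $C^\infty_\C$, as the paper itself records) and then apply the ordinary holomorphic exponential law for a product with a finite-dimensional factor. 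With that reordering your proof is complete.
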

\begin{proof}
 By Proposition \ref{prop: Evol:ex} the following map makes sense
 \[
  \func{H}{U_{4n} \times Q_n}{\C^N}{(a,\gamma)}{\pr_2 \left( h \left(	a, \Fl^f_0 (1,a,\gamma)	\right)          \right)}.
 \]
 We deduce from the proof of Proposition \ref{prop: Evol:ex} that the map $\smfunc{\evol}{P_n}{\Diffw(M)}$ can be written as $\evol = (\Phi_{\id_M})^{-1} \circ j_{6n}\circ\Psi_P$ where 
 $j_{6n}$ is the limit map (see \ref{setup: sp:sets}) and 
 \[
  \func{\Psi_P}{P_n}{E_{6n}}{\gamma}{\left(  a \mapsto H\big(a,C([0,1],\theta_n)(\gamma)\big)   \right).}
 \]
 Hence, continuity of $\evol$ follows as soon as we are able to show that $\smfunc{\Psi_P}{P_n}{E_{6n}}$ is continuous. To show this, consider the following commutative diagram:
 \[
  \xymatrix{ 	C([0,1],E_{n})	\ar[d]_{C([0,1],\theta_n)}		& &	P_n \ar[ll]_-{\supseteq} \ar[r]_{\Psi_P}\ar[d]		&		E_{6n}	\ar[d]^{\theta_{6n}}	\\
		C([0,1],F_{n})						& &	Q_n \ar[ll]_-{\supseteq} \ar[r]_{\Psi_Q}		&		F_{6n} 				
	    }
 \]
 with $\func{\Psi_Q}{Q_n}{F_{6n}}{\gamma}{\left(  a \mapsto H(a,\gamma)  \right).}$
 As $E_{6n}$ is isometrically embedded in $F_{6n}$, continuity of $\Psi_P$ will follow from continuity of $\Psi_Q$.
 To this end, we show that for fixed $\gamma_0\in Q_n$ the map $\Psi_Q$ is continuous at $\gamma_0$.

 By  \cite[Proposition 5.9]{AS2014} the flow $\smfunc{\Fl^f_0(1,\cdot)}{U_{4n}\times Q_n}{\C^N}$ is $C^\infty_\C$ and hence, the map $\smfunc{H}{U_{4n}\times  Q_n}{\C^N}$ defined above is $C^\infty_\C$ as well.
 It is well-known (see e.g.~\cite[Proposition 6.3]{Amann}) that this implies that the map $H$ is locally Lipschitz.
 We apply \cite[Proposition 6.4]{Amann} to the compact sets $\overline{U_{6n}}\subseteq U_{4n}$ and $\smset{\gamma_0}\subseteq Q_n$ to obtain an open $\gamma_0$-neighbourhood $W_{\gamma_0}$ such that $\smfunc{H|_{ \overline{U_{6n}} \times W_{\gamma_0} }}{ \overline{U_{6n}} \times W_{\gamma_0}}{\C^N}$
 is uniformly Lipschitz continuous with respect to $\gamma \in W_{\gamma_0}$, i.e.~there is a number $L_{\gamma_0}>0$ with $\nnfunc{W_{\gamma_0}}{\C^N}{\gamma}{H(a,\gamma)}$
 is $L_{\gamma_0}$-Lipschitz continuous for each $a\in\overline{U_{6n}}$.
 We conclude that the $F_{6n}$-valued map
 \[
  \func{\Psi_Q|_{W_{\gamma_0}}}{W_{\gamma_0}}{F_{6n}}{\gamma}{\left(  a \mapsto H(a,\gamma)  \right)}
 \]
 is $L_{\gamma_0}$-Lipschitz continuous. This finishes the proof.
\end{proof}

We can now summarise the results of this section as follows:

\begin{thm}
 The Lie group $\Diff^\omega (M)$ is $C^1$-regular.
\end{thm}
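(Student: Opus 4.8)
The plan is to assemble the pieces already proved in this section and feed them into Lemma \ref{lem: star}. That lemma reduces $C^1$-regularity of $\Diff^\omega(M)$ to verifying the single condition $(\star)$: for each $n \in \N$ one must exhibit an open $0$-neighbourhood $P_n \subseteq C([0,1],E_n)$ on which the flow $(\Fl^\gamma_0)^\vee$ is $C^1$ and on which $\evol$ is continuous. The natural candidate for $P_n$ is the ball $\oBallin{\frac{1}{4n}}{C([0,1],E_n)}{0}$ fixed just before Setup \ref{setup: fODE}, since this is precisely the domain on which the two preceding propositions were established.

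First I would invoke Proposition \ref{prop: Evol:ex}: for every $\gamma \in P_n$ it asserts that the induced map $(\Fl^\gamma_0)^\vee \colon [0,1] \rightarrow \Diff^\omega(M)$ is of class $C^1$ and, by the discussion in Setup \ref{setup: reginDiff} identifying the flow as the solution of the regularity ODE \eqref{eq: diff:Flow}, that this curve is exactly the evolution $\Evol(\gamma)$. This disposes of the first half of $(\star)$. Second I would invoke Proposition \ref{prop: evol continuous}, which states directly that $\evol \colon P_n \rightarrow \Diff^\omega(M)$, $\gamma \mapsto (\Fl^\gamma_0)^\vee(1)$, is continuous; this is the second half of $(\star)$. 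Since $n \in \N$ was arbitrary, condition $(\star)$ holds in full, and Lemma \ref{lem: star} then yields $C^1$-regularity.

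At this final assembly stage there is essentially no obstacle remaining: the statement follows formally by citing Lemma \ref{lem: star} together with Propositions \ref{prop: Evol:ex} and \ref{prop: evol continuous}. The genuine difficulty was already absorbed into those propositions, namely controlling the flow of the time-dependent holomorphically extended vector field through the $C^{r,s}_{\R,\C}$-calculus of \cite{AS2014} (global existence via Lemma \ref{lem: ODE:glob}, the $C^{1,\infty}_{\R,\C}$-regularity of the flow, the exponential law producing a $C^1_\R$-curve into $\Hol(U_{4n},\C^N)$, and the Lipschitz estimate yielding continuity of $\evol$) and into the reduction to Silva Lie groups supplied by Gl\"ockner's theorem \cite[Theorem 15.5]{1208.0715v3}. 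The only point worth stating explicitly in the write-up is that the same $P_n$ serves simultaneously for both clauses of $(\star)$, which is immediate because both propositions are phrased over that common neighbourhood.

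\begin{proof}
 Fix $n \in \N$ and set $P_n \coloneq \oBallin{\frac{1}{4n}}{C([0,1],E_n)}{0}$ as in Setup \ref{setup: fODE}. By Proposition \ref{prop: Evol:ex} the map $(\Fl^\gamma_0)^\vee \colon [0,1] \rightarrow \Diff^\omega(M)$ is $C^1$ for every $\gamma \in P_n$ and equals the evolution $\Evol(\gamma)$. By Proposition \ref{prop: evol continuous} the associated map $\evol \colon P_n \rightarrow \Diff^\omega(M)$, $\gamma \mapsto (\Fl^\gamma_0)^\vee(1)$, is continuous. As $n$ was arbitrary, condition $(\star)$ of Lemma \ref{lem: star} is satisfied, and that lemma yields that $\Diff^\omega(M)$ is $C^1$-regular.
\end{proof}
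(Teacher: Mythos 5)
Your proof is correct and coincides with the paper's own argument, which likewise deduces condition $(\star)$ of Lemma \ref{lem: star} from Propositions \ref{prop: Evol:ex} and \ref{prop: evol continuous} and then applies that lemma. No gaps; the explicit remark that the same $P_n$ serves both clauses of $(\star)$ is a reasonable small addition.
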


\begin{proof} 
 Proposition \ref{prop: Evol:ex} and Proposition \ref{prop: evol continuous} imply that condition $(\star)$ from Lemma \ref{lem: star} is satisfied. 
 Thus the assertion follows from that Lemma.
\end{proof}
\newpage

\section{The group of real analytic diffeomorphism on the circle is not real analytic}\label{sect: S1}
\newcommand{\CwS}{C^\omega_\R(\SOne,\SOne)}
\newcommand{\CwR}{C^\omega_\R(\SOne,\R)}
\newcommand{\CwC}{\Hol (\SOne\nobreak \subseteq\nobreak \C^\times,\nobreak \C)}
\newcommand{\ind}[1]{\mathbbold{1}_{#1}}

In this section, we will prove Theorem C. 
In particular, this implies that $\Diff^\omega (M)$ is not in general a real analytic Lie group.
To this end consider the unit circle $\SOne$ in $\C\cong\R^2$ with its canonical real analytic manifold structure. 
We begin with preparatory considerations concerning $\CwS$.

\begin{setup}[Real analytic local addition on $\SOne$]\label{setup: SONE}
The manifold $\SOne$ carries the structure of a real analytic (one-dimensional) Lie group and hence its tangent bundle is trivial via the following canonical isomorphism of real analytic vector bundles:
 \[
  \nnfunc{\SOne\times L(\SOne)}{T{\SOne}}{(z,v)}{(z,z \cdot v).}
 \]
 Since the Lie algebra  $L(\SOne)=T_1\SOne=i\R$ is isomorphic to $\R$, we obtain the following isomorphism:
 \[
  \func{\psi}{\SOne\times \R}{T{\SOne}}{(z,r)}{(z, z\cdot ir)}
 \]
The space $\vectw{\SOne}$ consists of all analytic sections in the tangent bundle $T{\SOne}$ and $\CwR$ can be viewed as analytic sections in the trivial bundle $\SOne\times\R$.
Hence, the spaces of sections are isomorphic as locally convex vector spaces. Note: The same argument also works if $\SOne$ is replaced by any other analytic Lie group.
For the rest of this section, we identify $T\SOne$ with $\SOne\times \R$ via the given isomorphism.\bigskip

The set $\Omega\coloneq\SOne\times ]\!-\!\pi,\pi[$ is an open neighbourhood of the zero-section in $T\SOne \cong \SOne\times\R$. 
 The manifold $\SOne$ admits a canonical real analytic local addition:
 \[
  \func{\Sigma}{\Omega}{\SOne}{(z,r)}{z\cdot e^{ir}.}
 \]
 In fact, the map
 \[
  \func{(\pi_{\SOne},\Sigma)  }{\Omega}{\setm{(z,w)\in\SOne\times\SOne}{z\neq -w }}{(z,r)}{(z,z\cdot e^{ir})}
 \]
 is an analytic diffeomorphism with inverse:
 \[
  \func{(\pi_{\SOne},\Sigma)^{-1}  }{\setm{(z,w)\in\SOne\times\SOne}{z\neq -w }}{\Omega}{(z,w)}{(z,\arg\left(\frac{w}{z}\right)}),
 \]
 where $\arg$ denotes the principal argument in the interval $\left]-\pi,\pi\right[$.
\end{setup}

As a next step, we consider the analytic manifold $\CwS$, constructed in Theorem \ref{thm: ram:mfd}.

\begin{setup}		\label{setup: mu}
 We want to consider a chart of the manifold $\CwS$ around the identity. 
 First, we observe that $\id_{\SOne}^*T\SOne = T\SOne\cong \SOne\times \R$.
 Thus, the canonical chart in \ref{setup: charts} around $\id_{\SOne}$ is given by:
 \begin{align*}
  \func{\Phi_{\id_{\SOne}}}     {& U_{\id_{\SOne}}}{V_{\id_{\SOne}}\subseteq \CwR}{\gamma}{\left(	z \mapsto \arg\left(\frac{\gamma(z)}{z}\right)	\right),}	\\
  \func{\Phi_{\id_{\SOne}}^{-1}}{& V_{\id_{\SOne}}}{U_{\id_{\SOne}}}{\eta}{\left(	z \mapsto z\cdot e^{i \eta(z)}\right).}
 \end{align*}
 Using this local chart around $\id_{\SOne}$, the composition map looks like
 \[
  \func{\mu}{\CwR\times \CwR}{\CwR}{(\eta_1,\eta_2)}{\eta_1 \circ E(\eta_2),}
 \]
 where $\func{E(\eta)}{\SOne}{\SOne}{z}{z \cdot e^{i\eta(z)}.}$
 We will now show that the map $\mu$ is \emph{not} real analytic in any open neighbourhood of $(0,0)$.
\end{setup}

\begin{setup}		\label{setup: chart_SOne}
 Viewing $\SOne$ as a subset of $\C^\times \coloneq \C \setminus \{0\}$, we may consider $\C^\times$ to be a complexification of $\SOne$. 
 This allows us to fix a fundamental sequence of neighbourhoods 
  \begin{displaymath}
   U_n \coloneq \setm{z \in \C}{e^{-\frac{1}{n}} <|z|< e^{ \frac{1}{n}}}
  \end{displaymath}
of $\SOne$ in its complexification.
 By \cite[4.2]{hg2004b}, the complexification of the locally convex space $\CwR$ is the Silva space $\CwC$.
 Now $\CwC$ is the locally convex direct limit of the following sequence of complex Banach spaces:
 \[
  \CwC = \bigcup_{n\in\N}\BHol(U_n,\C)
 \]
 To shorten the notation we set $E_n^b\coloneq \BHol(U_n,\C)$ using the fundamental sequence $(U_n)_{n \in \N}$ defined in \ref{setup: SONE} (b).
\end{setup}

\begin{prop}		\label{prop: mu not analytic}
 The map $\mu \colon \CwR\times \CwR \rightarrow \CwR$ introduced in \ref{setup: mu} is not real analytic on any neighbourhood of $(0,0) \in \CwR\times \CwR$.
\end{prop}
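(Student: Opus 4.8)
The plan is to argue by contradiction: suppose $\mu$ were real analytic on some open neighbourhood $U$ of $(0,0)$ in $\CwR\times\CwR$. Then $\mu$ would extend to a holomorphic ($C^\infty_\C$) map $\tilde\mu\colon\tilde U\to\CwC$ on an open neighbourhood $\tilde U$ of $(0,0)$ in the complexification $\CwC\times\CwC$. The whole strategy is to exploit that $\mu$ is \emph{linear} in its first argument, $\mu(\eta_1,\eta_2)=\eta_1\circ E(\eta_2)$, so that the obstruction is already visible in the first-order differential of $\tilde\mu$ in the $\eta_1$-direction. This differential is a family of composition operators $\xi\mapsto\xi\circ E(\eta_2)$, and these are exactly the problem: for $\eta_2$ with non-vanishing imaginary part the map $E(\eta_2)\colon z\mapsto z\,e^{i\eta_2(z)}$ displaces $\SOne$ radially off itself by $\Im\eta_2$, so the precomposition is only defined on germs whose annulus of holomorphy is wide enough, and no fixed width serves all germs of the inductive limit $\CwC=\bigcup_n E_n^b$.

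Concretely, for a fixed direction $\xi\in\CwR$ I would consider $g_\xi(\eta_2)\coloneq D_1\tilde\mu(0,\eta_2)(\xi)$, the first partial differential of $\tilde\mu$ in the first slot at the point $(0,\eta_2)$. Since $\tilde\mu$ is $C^\infty_\C$, the map $g_\xi$ is holomorphic and $\CwC$-valued on the open set $\tilde U_2\coloneq\setm{\eta_2}{(0,\eta_2)\in\tilde U}\ni 0$. On real $\eta_2$ the linearity of $\mu$ gives $g_\xi(\eta_2)=\xi\circ E(\eta_2)$; as both sides are holomorphic in $\eta_2$ and agree on the totally real locus, the identity theorem yields $g_\xi(\eta_2)=\xi\circ E(\eta_2)$ on the component of $\setm{\eta_2}{\supnorm{\Im\eta_2}<\rho(\xi)}\cap\tilde U_2$ containing $0$, where $\rho(\xi)$ is controlled by the width of the annulus of holomorphy of $\xi$.

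The contradiction then comes from a good choice of test germ. Pick $\epsilon_1>0$ with $\oBallin{\epsilon_1}{E_1^b}{0}\subseteq\tilde U_2$ (possible since $E_1^b\hookrightarrow\CwC$ is continuous and $\tilde U_2$ is open), and choose an integer $m>1/\epsilon_1$. As test germ take the dilogarithm-type function
\[
 \xi_m(w)\coloneq\sum_{k\ge 1}\frac{e^{-k/m}}{k^2}\bigl(w^k+w^{-k}\bigr),
\]
which is real on $\SOne$, lies in $E_m^b=\BHol(U_m,\C)$, and whose complexification is holomorphic exactly on $U_m$ with a logarithmic boundary singularity at the inner boundary point $w=e^{-1/m}$. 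Feeding the purely imaginary constant germs $\eta_2=i\epsilon$ into the identification of the previous step, and noting $E(i\epsilon)$ rescales $\SOne$ by $e^{-\epsilon}$, gives for $0\le\epsilon<1/m$ the scalar function
\[
 h(\epsilon)\coloneq\ev_1\bigl(g_{\xi_m}(i\epsilon)\bigr)=\ev_1\bigl(\xi_m\circ E(i\epsilon)\bigr)=\xi_m\bigl(e^{-\epsilon}\bigr)=\mathrm{Li}_2\bigl(e^{\epsilon-1/m}\bigr)+\mathrm{Li}_2\bigl(e^{-\epsilon-1/m}\bigr),
\]
where $\ev_1$ is evaluation at $1\in\SOne$. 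Restricting the holomorphic map $\zeta\mapsto\ev_1\bigl(g_{\xi_m}(\zeta\,i)\bigr)$ to the complex disc $\abs{\zeta}<\epsilon_1$ — which lands in $\tilde U_2$ because $\norm{\zeta\,i}_{E_1^b}=\abs{\zeta}<\epsilon_1$ — exhibits a function that is real analytic on $(-\epsilon_1,\epsilon_1)$ and agrees with $h$ on $[0,1/m)$; since $1/m<\epsilon_1$ lies in the interior of that disc, $h$ would have to extend real analytically across $\epsilon=1/m$. But $\mathrm{Li}_2\bigl(e^{\epsilon-1/m}\bigr)$ has a branch point at $\epsilon=1/m$ (its derivative $-\ln\bigl(1-e^{\epsilon-1/m}\bigr)$ blows up there), so $h$ is not even $C^1$ up to $1/m$ — contradiction.

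I expect the delicate point to be the identity-theorem identification $g_\xi(\eta_2)=\xi\circ E(\eta_2)$ for genuinely complex $\eta_2$: one must verify that both maps are holomorphic and $\CwC$-valued on a common connected open set reaching the imaginary points $i\epsilon$ with $\epsilon<1/m$, and track carefully that $\xi_m\circ E(i\epsilon)$ remains a bona fide germ around $\SOne$ exactly up to the threshold $\epsilon=1/m$. Everything else — holomorphy of $g_{\xi_m}$, continuity of $\ev_1$, and the openness of $\tilde U$ providing a step-$1$ radius $\epsilon_1$ \emph{independent of} $m$ — is routine, and choosing $m>1/\epsilon_1$ is precisely what converts the shrinking threshold $1/m\to 0$ into a contradiction with the fixed neighbourhood $\tilde U$.
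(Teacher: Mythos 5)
Your proposal is correct in substance and rests on the same mechanism as the paper's proof: a test germ living only in a deep step $E_m^b=\BHol(U_m,\C)$ of the inductive limit, with a singularity at distance $1/m$ from $\SOne$, gets pushed onto that singularity by constant, purely imaginary second arguments whose admissible size is bounded below by a fixed step-$1$ radius coming from the openness of the extension's domain; choosing $m$ large relative to that radius then yields the contradiction. The differences are in the packaging. The paper evaluates the extension $\mu_\C$ directly at the pair $(r\cdot f|_{U_n},\, z\cdot\mathbbold{1}_{U_1})$ with $z$ a \emph{single complex scalar}; for real $z$ the second argument is an honest real element of $C^\omega_\R(\SOne,\R)$, so the explicit formula for $\mu$ applies there, and the ordinary one-variable identity theorem for the scalar function $z\mapsto\mu_\C(\cdot,\cdot)(1)$ does all the analytic continuation. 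Your route instead passes through the partial differential $D_1\tilde\mu(0,\cdot)(\xi)$ and an identity theorem in the infinite-dimensional variable $\eta_2$, which obliges you to prove that $\eta_2\mapsto\xi_m\circ E(\eta_2)$ is a holomorphic $\Hol(\SOne\subseteq\C^\times,\C)$-valued map on $\setm{\eta_2}{\supnorm{\Im\eta_2}<1/m}$ --- exactly the step you flag as delicate, and the only genuine gap as written (holomorphy of such composition operators into the Silva limit is provable but not free). You can excise that step entirely by parametrizing the second slot by $w\cdot\mathbbold{1}_{U_1}$ with $w$ complex rather than by $\zeta i$: then the real points $w\in\R$ give real second arguments, the identification with $\xi_m(e^{iw})$ holds there for free, and the one-variable identity theorem on the connected set $\setm{w}{\abs{w}<\epsilon_1,\ \abs{\Im w}<1/m}$ carries you to $w=i\epsilon$ --- which is precisely the paper's argument. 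Your dilogarithm germ (contradiction via a divergent derivative at a branch point) and the paper's rational germ with poles at $e^{\pm R}$ (contradiction via a divergent function value) are interchangeable, and the linearity of $\mu$ in the first argument, which you emphasize, is likewise not needed.
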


\begin{proof}
  \newcommand{\Umg}{\Omega}
  \newcommand{\CMult}{\mu_\C}
  \newcommand{\h}{f}
  \newcommand{\hh}{h}
  \newcommand{\hhh}{g}

 We argue by contradiction and assume that $\mu$ is real analytic in a neighbourhood of $(0,0)$.
 Then by definition of real analyticity,
 there exists an open $0$-neighbourhood $\Umg\subseteq \CwC$ and a complex analytic mapping $\smfunc{\CMult}{\Umg\times\Umg}{\CwC}$ such that $\CMult(\eta_1,\eta_2)\coloneq\mu(\eta_1,\eta_2)$ whenever $\eta_1,\eta_2\in\Umg\cap\CwR$.

 Since the linear map $\nnfunc{E_1^b}{\CwC}{f}{f|_{\SOne}}$ is continuous, 
 there is a number $R>0$ such that the closed ball $\cBallin{R}{E_1^b}{0}$ is mapped into the open neighbourhood $\Umg\subseteq \CwC$.

 Using this number $R$, we define the following meromorphic function:
 \[
  \func{\h}{\C^\times \setminus \smset{e^R,e^{-R}} }{\C}{z}{\frac{1}{z-e^R} + \frac{1}{\frac{1}{z}-e^R}}
 \]
 By construction, this function has the properties
 \[
  \h(\overline z) = \overline{\h(z)} \quad\hbox{ and }\quad \h(1/z)=\h(z) \quad \hbox{ for all } z.
 \]
 Combining these two properties, we may conclude that whenever $z\in\SOne$, we have $\h(z)\in\R$, since
 \[
  \overline{\h(z)} = \h(\overline z) = \h(1/z) = \h(z).
 \]
 The function $\h$ has poles of order $1$ at point $e^R$ and $e^{-R}$ (and a removable singularity at $0$ which is not important for our discussion) and is holomorphic elsewhere.
 
 As a next step, we fix a positive integer $n\in\N$ such that $\frac{1}{n}<R$ and find a number $\delta>0$ such that the closed ball $\cBallin{\delta}{E_n^b}{0}$ is mapped into the open neighbourhood $\Umg\subseteq \CwC$.

 Since the (relatively compact) open set $U_n$ has a positive distance from all the singularities of the function $\h$, the function $\h$ is bounded on the set $U_n$. Hence, $\h|_{U_n}\in E_n^b$. 
 We fix a scalar $r>0$ such that $r \cdot \h|_{U_n}\in\cBallin{\delta}{E_n^b}{0}$.
  
 For each complex number $z$, we denote by $z\cdot\ind{U_1}$ the constant function defined on $U_1$ taking the value $z$ at each point. For $\abs{z}\leq R$, the function $z\cdot\ind{U_1}$ belongs to $\cBallin{R}{E_1^b}{0}$ which is mapped into the set $\Umg$ when restricting the domain to $\SOne$.
 
 For each $z\in\cBallin{R}{\C}{0}$, we obtain that the pair 
  \begin{displaymath}
   (r \cdot \h|_{U_n}  , z \cdot\ind{U_1})\in	\cBallin{\delta}{E_n^b}{0}	 \times		\cBallin{R}{E_1^b}{0} \subseteq \Umg\times\Umg 
  \end{displaymath}
lies in the domain of the complex analytic map $\smfunc{\CMult}{\Umg\times\Umg}{\CwC}$. This allows us to define the following function
 \[
  \func{\hh}{\oBallin{R}{\C}{0} }{\C}{z}{ \CMult(r \cdot \h|_{U_n}  , z \cdot\ind{U_1}) (1). }
 \]
 As a composition of complex analytic mappings, this function is itself complex analytic, i.e.~holomorphic on the open disc $\oBallin{R}{\C}{0}\subseteq \C$.
 
 Now, let $z\in\oBallin{R}{\C}{0}\cap \R = \left] -R , R \right[$. Then, we can evaluate $\hh(z)$ explicitly:
 \begin{equation} \label{eq: ident}\begin{aligned}
  \hh(z)	  &		=		\CMult(r \cdot \h|_{U_n}  , z \cdot\ind{U_1}) (1)
  				=		\mu(r \cdot \h|_{U_n} , z \cdot\ind{U_1}) (1)
  				\\&		= 	r\cdot \h \circ E( z \cdot\ind{U_1})(1) 
  				= 	r\cdot \h \left( 1 \cdot e^{i z \cdot \ind{U_1}(1)}\right) 
  				\\&		= 	r\cdot \h \left( e^{i z}\right). 
  				\end{aligned}
 \end{equation}
 Note that for $|z|<R$, we have that
 \[
  |e^{i z}| = e^{\Re(i z)}  = e^{-i \Im(z)}
 \]
 and since $\Im(z)\in\left]-R,R\right[$, we can conclude that $e^{iz}$ is not one of the singularities of the function $\h$. 
 This shows that the holomorphic function 
 \[
  \func{\hhh}{\oBallin{R}{\C}{0} }{\C}{z}{r\cdot \h \left( e^{i z}\right)}
 \]
 makes sense and coincides with $\hh$ for all real arguments $z$ by \eqref{eq: ident}. 

 By the Identity Theorem for holomorphic functions, we obtain $\hh \equiv \hhh$ and thus the formula \eqref{eq: ident} holds for all $z \in \oBallin{R}{\C}{0}$.($z \in B_R^\C(0)$.)
 
 In particular, this allows us to conclude that $\hh(it) = r\cdot \h \left( e^{i (it)} \right)$ for all real $t\in\left]0,R\right[$ and hence
 \[
  \CMult(r \cdot \h|_{U_n}  , i t \cdot\ind{U_1}) (1) = r\cdot \h \left( e^{-t}\right).
 \]
 Now, we take the limit $t\to R$ on both sides and obtain a contradiction, since the left hand side converges to the perfectly well-defined number
 \[
  \CMult(r \cdot \h|_{U_n}  , iR  \cdot\ind{U_1}) (1)
 \]
 while the right hand diverges since $e^R$ is a pole of the function $\h$.
 \end{proof}
 
 We can now deduce the content of Theorem C.
 \begin{thm}
  The group multiplication $\Diff^\omega (\SOne)$ is not real analytic whence the Lie group $(\Diff^\omega (\SOne),\circ)$ is not a real analytic Lie group in our sense. 
 \end{thm}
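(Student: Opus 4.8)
The plan is to deduce the theorem directly from Proposition~\ref{prop: mu not analytic} by transporting the failure of analyticity of $\mu$ to the abstract group multiplication via the coordinate description set up in \ref{setup: mu}. Recall that by Proposition~\ref{prop: diffomega} the group $\Diff^\omega(\SOne)$ is an \emph{open} submanifold of the real analytic manifold $\CwS$ of Theorem~\ref{thm: ram:mfd}, so its real analytic structure is precisely the one induced by the canonical charts. Since the transition maps of that atlas are real analytic (Theorem~\ref{thm: ram:mfd}~(a)), real analyticity of a map into or out of $\Diff^\omega(\SOne)$ is a local property that may be tested in any single chart; in particular it suffices to work in the chart $\Phi_{\id_{\SOne}}$ around the identity.

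First I would note that the multiplication $m\colon \Diff^\omega(\SOne)\times\Diff^\omega(\SOne)\to\Diff^\omega(\SOne)$, $(f,g)\mapsto f\circ g$, fixes $(\id_{\SOne},\id_{\SOne})$ and sends it to $\id_{\SOne}$, so we may use $\Phi_{\id_{\SOne}}\times\Phi_{\id_{\SOne}}$ on the source and $\Phi_{\id_{\SOne}}$ on the target. A direct computation in these coordinates (as in \ref{setup: mu}) gives the local representative $(\eta_1,\eta_2)\mapsto \eta_2 + \eta_1\circ E(\eta_2)$, which equals $\mu$ up to the continuous linear, hence real analytic, projection $(\eta_1,\eta_2)\mapsto \eta_2$. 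Thus the representative of $m$ near $(0,0)$ is real analytic if and only if $\mu$ is. Were $m$ real analytic, its representative would therefore be real analytic on some open neighbourhood of $(0,0)\in\CwR\times\CwR$, forcing $\mu$ to be real analytic there.

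This is exactly what Proposition~\ref{prop: mu not analytic} rules out, since $\mu$ is not real analytic on any neighbourhood of $(0,0)$. The resulting contradiction shows that the group multiplication $m$ is not real analytic. Finally, because a real analytic Lie group in our sense requires the multiplication to be a real analytic map, $(\Diff^\omega(\SOne),\circ)$ cannot be a real analytic Lie group, which is the assertion of the theorem.

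With Proposition~\ref{prop: mu not analytic} already available, there is no genuinely hard analytic step left; the argument is essentially a corollary. The only point requiring care is the bookkeeping of the second paragraph, namely that real analyticity of $m$ genuinely descends to real analyticity of its representative in the identity chart. This rests on the charts being real analytically compatible (Theorem~\ref{thm: ram:mfd}) and on $\Diff^\omega(\SOne)$ being an \emph{open} submanifold (Proposition~\ref{prop: diffomega}), so that the single model chart $\Phi_{\id_{\SOne}}$ may legitimately be used on both factors of the domain and on the target, and so that continuity of $m$ guarantees a neighbourhood of $(\id_{\SOne},\id_{\SOne})$ is mapped into the chart domain $U_{\id_{\SOne}}$.
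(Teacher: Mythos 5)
Your proposal is correct and follows essentially the same route as the paper: pull the multiplication back through the identity chart $\Phi_{\id_{\SOne}}$, identify the local representative with $\mu$, and invoke Proposition~\ref{prop: mu not analytic}. Your extra observation that the representative is $\eta_2+\eta_1\circ E(\eta_2)$, differing from $\mu$ only by the continuous linear (hence real analytic) map $(\eta_1,\eta_2)\mapsto\eta_2$, is a careful and valid refinement of the bookkeeping that the paper leaves implicit.
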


 \begin{proof}
  By \ref{prop: diffomega}, the group $\Diff^\omega (\SOne)$ is an open neighbourhood of $\id_{\SOne}$ in the manifold $\CwS$. Pulling back the group multiplication by the canonical chart (\ref{setup: chart_SOne}), we obtain the mapping $\mu$ from \ref{setup: mu} on some open neighbourhood of $(0,0)$ in $\CwC\times \CwC$. Hence the assertion follows from Proposition \ref{prop: mu not analytic}.
 \end{proof}

\section*{Acknowledgements}
The research on this paper was partially supported by the project \emph{Topology in Norway} (Norwegian Research Council project 213458).
 
\begin{appendix}
\section{Locally convex calculus and spaces of germs of analytic mappings}\label{app: appendix}

In this appendix we recall several well known facts concerning calculus in locally convex spaces. 
Moreover, we discuss topologies on spaces of (germs) of analytic mappings. 
These results are well known, but it is sometimes difficult to extract the results and their proofs from the literature. 
Hence we repeat the results needed together with their proofs for the readers convenience.
 
\begin{defn}
 Let $r \in \N_0 \cup \set{\infty}$ and $E$, $F$ locally convex $\K$-vector spaces and $U \subseteq E$ open.
 We say a map $f \colon U \rightarrow F$ is a $C^r_\K$-map if it is continuous and the iterated directional derivatives 
  \begin{displaymath}
   d^kf (x,y_1,\ldots,y_k) \coloneq (D_{y_k} \cdots D_{y_1} f) (x)
  \end{displaymath}
 exist for all $k \in \N_{0}$ with $k \leq r$ and $y_1,\ldots,y_k \in E$ and $x \in U$,
 and the mappings $d^kf \colon U \times E^k \rightarrow F$ so obtained are continuous. 
 If $f$ is $C^\infty_\R$, we say that $f$ is \emph{smooth}.
 If $f$ is $C^\infty_\C$ we say that $f$ is \emph{complex analytic} or \emph{holomorphic} and that $f$ is of class $C^\omega_\C$.\footnote{Recall from \cite[Proposition 1.1.16]{dahmen2011}
 that $C^\infty_\C$ functions are locally given by series of continuous homogeneous polynomials (cf.\ \cite{BS71a,BS71b}).
 This justifies our abuse of notation.}     \label{defn: analyt}
\end{defn}

\begin{setup}[Complexification of a locally convex space]
 Let $E$ be a real locally convex topological vector space. We endow the locally convex product $E_\C \coloneq E \times E$ with the following operation 
 \begin{displaymath}
  (x+iy).(u,v) \coloneq (xu-yv, xv+yu) \quad \text{ for } x,y \in \R, u,v \in E
 \end{displaymath}
 The complex vector space $E_\C$ is called the \emph{complexification} of $E$. We identify $E$ with the closed real subspace $E\times \set{0}$ of $E_\C$. 
\end{setup}

\begin{defn}
 Let $E$, $F$ be real locally convex spaces and $f \colon U \rightarrow F$ defined on an open subset $U$. 
 We call $f$ \emph{real analytic} (or $C^\omega_\R$) if $f$ extends to a $C^\infty_\C$-map $\tilde{f}\colon \tilde{U} \rightarrow F_\C$ on an open neighbourhood $\tilde{U}$ of $U$ in the complexification $E_\C$.
\end{defn}

For $r \in \N_0 \cup \set{\infty, \omega}$, being of class $C^r_\K$ is a local condition, 
i.e.\ if $f|_{U_\alpha}$ is $C^r_\K$ for every member of an open cover $(U_\alpha)_{\alpha}$ of its domain, 
then $f$  is $C^r_\K$. (see \cite[pp. 51-52]{hg2002a} for the case of $C^\omega_\R$, the other cases are clear by definition.)  
In addition, the composition of $C^r_\K$-maps (if possible) is again a $C^r_\K$-map (cf. \cite[Propositions 2.7 and 2.9]{hg2002a}). 
 
\begin{setup}[$C^r_\K$-Manifolds and $C^r_\K$-mappings between them]
 For $r \in \N_0 \cup \set{\infty, \omega}$, manifolds modelled on a fixed locally convex space can be defined as usual. 
 The model space of a locally convex manifold and the manifold as a topological space will always be assumed to be Hausdorff spaces. 
 However, we will not assume that manifolds are second countable or paracompact.
 Direct products of locally convex manifolds, tangent spaces and tangent bundles as well as $C^r_\K$-maps between manifolds may be defined as in the finite dimensional setting.

For $C^r_\K$-manifolds $M,N$ we use the notation $C^r_\K(M,N)$ for the set of all $C^r_\K$-maps from $M$ to $N$.
Moreover, we let $\Diff^r_\K(M)$ denote the subset of all $C^r_\K$-diffeomorphisms in $C^r_\K (M,M)$. 
For $C^\infty_\C$-manifolds, we will also write $\Hol(M,N)\coloneq C^\omega_\C(M,N) \coloneq C^\infty_\C(M,N)$ for the set of all complex analytic maps from $M$ to $N$.

Furthermore, for $s \in \{\infty,\omega\}$, we define \emph{locally convex $C^s_\K$-Lie groups} as groups with a $C^s_\K$-manifold structure turning the group operations into $C^s_\K$-maps.
\end{setup}

To deal with manifolds of analytic mappings we need to slightly extend the notion of locally convex manifold. 
This is needed only in Section \ref{sect: mfd}, where we relax the definition of a manifold as follows.
 
\begin{defn}[Generalized manifolds]\label{defn: ext}
  Let $M$ be a Hausdorff topological space. 
    \begin{enumerate}
     \item A pair $(U_\kappa, \kappa)$ with $U_\kappa \subseteq M$ open and $\kappa \colon U_\kappa \rightarrow V_\kappa \subseteq E_\kappa$ a homeomorphism onto an open subset of a locally convex space $E_\kappa$ over $\K$ is called \emph{generalized manifold chart}.
      Note that the model space may change depending on on the chart.
     \item For $r \in \N_0 \cup \set{\infty,\omega}$ define $C^r_\K$-compatibility and $C^r_\K$-atlases for generalized manifold charts exactly as in the finite dimensional case.
          A generalized $C^r_\K$-manifold is a Hausdorff topological space with a $C^r_\K$-manifold structure, i.e. an equivalence class of $C^r_\K$-atlases induced by an atlas of generalized charts.  
    \end{enumerate}  
\end{defn}

Now we discuss the standard topologies on function spaces.


\begin{setup}[The Compact Open $C^\infty_\K$-Topology]				\label{defn: cocinfty}
 Let $M$ be a finite dimensional $C^\infty_\K$-manifold of dimension $d\in\N_0$ and let $E$ be any locally convex $\K$-vector space. The 
 \emph{compact open $C^\infty_\K$-topology} on the vector space $C^\infty_\K(M,E)$ is the locally convex vector spaces topology, given by the family of seminorms
    \[
     \func{P_{\alpha,\phi,K,p}}{ C^\infty_\K(M,E) }{\left[0,+\infty\right[}
     { \gamma } { \sup_{x\in K} p\left(	\partial^\alpha(\gamma\circ \phi^{-1})(x)	\right)},
    \] 
    where $p$ is a continuous seminorm on $E$,
    $\alpha\in\N_0^d$ is a multi-index,
    $\smfunc{\phi}{U_\phi}{V_\phi}$ is a $C^\infty_\K$-diffeomorphism of an open subset $U_\phi\subseteq M$ to an open subset $V_\phi\subseteq \K^d$, and
    $K\subseteq V_\phi$ is a compact set.
\end{setup}

In the case $\K=\C$, the space $C^\infty_\C(M,E)$ is endowed just with the compact open topology.
However, it is a well-known fact that the compact open topology coincides in this case with the topology from Definition \ref{defn: cocinfty}. 
For the reader's convenience we give a sketch of the proof.

\begin{lem}																																\label{lem: cocinfty}
 Let $M$ be a finite dimensional complex manifold and let $E$ be a complex locally convex vector space. Then the compact open $C^\infty_\C$-topology on the space
 \[
  \Hol(M,E)=C^\infty_\C(M,E),
 \]
 defined in \ref{defn: cocinfty} agrees with the usual compact open topology, which is the topology of uniform convergence on compact subsets.
\end{lem}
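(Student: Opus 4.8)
The plan is to prove that the identity map on $\Hol(M,E)$ is a homeomorphism between the two topologies by comparing their generating families of seminorms. One inclusion is immediate and the other rests on the Cauchy estimates for holomorphic maps.

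First I would observe that the compact open $C^\infty_\C$-topology is finer than the usual compact open topology. Indeed, among the seminorms $P_{\alpha,\phi,K,p}$ of Definition \ref{defn: cocinfty} are those with multi-index $\alpha=0$, namely $P_{0,\phi,K,p}(\gamma)=\sup_{x\in K}p((\gamma\circ\phi^{-1})(x))$. After the routine localisation observation that every compact subset of $M$ is covered by finitely many chart domains $U_\phi$, this subfamily already generates the topology of uniform convergence on compact subsets. Hence the compact open topology is coarser.

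The substantial direction is the converse: each seminorm $P_{\alpha,\phi,K,p}$ must be dominated by a compact open seminorm. Fix $\phi$, a compact set $K\subseteq V_\phi$, a multi-index $\alpha$, and a continuous seminorm $p$ on $E$. Since $V_\phi\subseteq\C^d$ is open and $K$ is compact, I would choose $r>0$ so small that the closed polydisc of polyradius $r$ about each point of $K$ lies inside a fixed compact set $K'\subseteq V_\phi$. Writing $g\coloneq\gamma\circ\phi^{-1}$, a holomorphic $E$-valued map on $V_\phi$, the goal reduces to the Cauchy estimate $p(\partial^\alpha g(x))\le \frac{\alpha!}{r^{|\alpha|}}\sup_{y\in K'}p(g(y))$ for all $x\in K$. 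Granting this, one gets $P_{\alpha,\phi,K,p}\le \frac{\alpha!}{r^{|\alpha|}}\,P_{0,\phi,K',p}$, and since the right-hand seminorm belongs to the compact open topology, continuity of $P_{\alpha,\phi,K,p}$ follows.

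The heart of the matter, and the step requiring care because $E$ is only locally convex and possibly incomplete, is this vector-valued Cauchy estimate. The clean route is via Hahn--Banach: for any continuous seminorm $p$ and any $v\in E$ one has $p(v)=\sup\{|\lambda(v)|:\lambda\in E',\ |\lambda(x)|\le p(x)\ \forall x\in E\}$. For each such functional $\lambda$ the scalar map $\lambda\circ g$ is holomorphic, and since $\lambda$ is continuous and linear it commutes with directional derivatives, so $\lambda(\partial^\alpha g(x))=\partial^\alpha(\lambda\circ g)(x)$. Applying the classical one-variable Cauchy estimates iteratively on the polydisc of polyradius $r$ yields $|\partial^\alpha(\lambda\circ g)(x)|\le \frac{\alpha!}{r^{|\alpha|}}\sup_{K'}|\lambda\circ g|\le \frac{\alpha!}{r^{|\alpha|}}\sup_{K'}p(g)$, and taking the supremum over admissible $\lambda$ gives the estimate. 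This functional-analytic reduction avoids a vector-valued Cauchy integral, which would otherwise force a completeness hypothesis on $E$, and it is the only genuinely nontrivial ingredient of the proof.
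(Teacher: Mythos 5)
Your proposal is correct and follows essentially the same route as the paper: both reduce to local charts and use Cauchy estimates to dominate each derivative seminorm $P_{\alpha,\phi,K,p}$ by a sup-seminorm over a slightly larger compact set, the reverse inclusion being immediate from the $\alpha=0$ seminorms. The only difference is that the paper applies the seminorm $p$ directly to a vector-valued Cauchy integral for the first directional derivative and then inducts on $\abs{\alpha}$, whereas you reduce to the scalar case via Hahn--Banach before invoking the classical estimates; your variant is marginally more careful, since it sidesteps any implicit appeal to the existence of the $E$-valued contour integral when $E$ is not assumed complete.
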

\begin{proof}
 Since $M$ is locally compact, we may work in local charts and hence, assume that $M=\Omega\subseteq\C^d$ is an open subset of a finite dimensional vector space $\C^d$.
 Let $a\in\Omega$ be a point. Then there is a number $R>0$ such that $K:=\cBallin{R}{\C^d}{a}\subseteq \Omega$. We fix the number $r:=R/2$.
 Let $x\in\oBallin{r}{\C^d}{a}$ and let $v\in\C^d$ be any vector of norm $1$. Then by Cauchy's integral formula, we may the write the derivative of $\gamma\in\Hol(\Omega,E)$ at point $a$ in direction $v$ as
 \[
  d^1\gamma(x,v)=\frac{1}{2\pi i} \int_{\abs{z}=r} \frac{\gamma(x+z v)}{z^{2}} dz.
 \]
 Applying a continuous seminorm $p$ on both sides, we obtain
 \[
  p\left(d^1\gamma(x,v)\right) \leq \frac{1}{2\pi}\cdot 2\pi r \sup_{\abs{z}=r} \frac{p(\gamma(x+zv))}{r^2}
  \leq \frac{1}{r} \sup_{y\in K} p(\gamma(y)). 
 \]
 In particular, if chose $v=e_j$ as a standard basis vector of $\C^d$, we obtain
 \[
  p\left(\frac{\partial\gamma}{\partial x_j}(x)\right) \leq \frac{1}{r} \sup_{y\in K} p(\gamma(y))
 \]
 and since $x\in\oBallin{r}{\C^d}{a}$ was arbitrary, this implies that uniform convergence on $K$ implies uniform convergence on the open ball $\oBallin{r}{\C^d}{a}$. Since every compact subset $K\subseteq\Omega$ can be covered by finitely many open balls, we have shown that taking the partial derivative $\frac{\partial }{\partial x_j}$ is continuous with respect to the compact open topology. The case of a derivative with respect to a multi-index follows by induction.
\end{proof}

\begin{setup}[The space of bounded holomorphic functions]
 For a finite dimensional complex manifold $M$ and complex Banach space $E$, the \emph{space of bounded holomorphic functions}
 \[
  \BHol(M,E)\coloneq \set{\gamma\in\Hol(M,E) \colon \gamma \text{ is bounded on }M },
 \]
 is a Banach space with respect to the supremum norm (cf.\ \cite[Proposition 6.5]{BS71b}).
\end{setup}


\begin{setup}[Fundamental sequence]\label{setup: fund:sq}
 Let $K$ be a compact subset of a finite dimensional $C^\omega_\K$ manifold $M$. 
 Then there always exists a sequence $U_1 \supseteq U_2 \cdots$ of metrisable open neighbourhoods of $K$ in $M$ such that
 \begin{itemize}
  \item [(a)] For each $n\in\N$, the set $\overline{U_{n+1}}$ is compact in $U_n$.
  \item [(b)] Each open neighbourhood $U$ of $K$ contains one of the sets $U_n, n\in \N$. 
  \item [(c)] Each connected component of each set $U_n$ intersects the compact set $K$ non trivially.
 \end{itemize}
 Such a sequence will be called a \emph{fundamental sequence} of open neighbourhoods of $K$ in $M$.
\end{setup}
\begin{proof}
 Note that in general, $M$ need not be metrisable. 
 However, $M$ is locally compact. Thus, the compact set $K$ has a relatively compact neighbourhood $U$. 
 The closure $\overline U$ is compact and locally metrisable, hence metrisable\footnote{For locally metrisable spaces, metrisability and paracompactness are equivalent.}. 
 Therefore, the compact set $K$ is contained in an open relatively compact and metrisable set $U$. 

 Using a metric on $U$, we construct a descending sequence $U_1 \supseteq U_2 \supseteq \ldots$ of open neighbourhoods of $K$ in $U$ such that every neighbourhood of $K$ in $M$ contains some $U_n$.  
 We can and will always choose a fundamental sequence such that every connected component of each $U_n$ meets $K$ and $\overline{U_{n+1}} \subseteq U_n$ for all $n$.
 Furthermore, the closure $\overline{U_{n+1}}$ is contained in $\overline{U}$ for all $n \in \N$ whence it is compact (in $M$ and also in $U_n$). 
\end{proof}

\begin{setup}[Germs of analytic mappings]										\label{setup_space_of_germs}
 Let $K$ be a compact subset of a finite dimensional $C^\omega_\K$ manifold $M$ and let $E$ be a locally convex vector space over $\K$.
 \begin{itemize}
  \item [(a)]
			 Let $C^\omega_\K (K \subseteq M, E)$ be the space of
			 \emph{germs of $\K$-analytic maps along $K$} of $C^\omega_\K$-functions from open $K$-neighbourhoods in $M$ to $E$.
			 
			 Again we set $\Hol (K \subseteq M, E) \coloneq C^\omega_\C (K \subseteq M, E)$.
 			 By abuse of notation the germ of $f$ around $K$ will be denoted as '$f$'.
 	\item [(b)] 
 	     Let $\K=\C$. 
 	     Consider the directed set $(\nN, \subseteq)$ of open neighbourhoods of $K$ in $M$ (partially) ordered by inclusion. 
  		 Then for $U,W \in \nN$ with $W \subseteq U$ we obtain continuous linear maps 
  		 \[
  		  \res_W^U \colon \Hol (U,E) \rightarrow \Hol (W,E), f \mapsto f|_W,
  		 \]
  		 which form bonding maps for an inductive system in the category of complex locally convex spaces. 
  		 Passing to the limit of the system, we obtain limit maps  
	     \begin{displaymath}
     		\res_K^W \colon \Hol (W, E) \rightarrow \Hol (K \subseteq M, E) , \quad W \in \nN,
    	 \end{displaymath}
  		 assigning to each function $f\in\Hol(W,E)$ the associated germ around $K$.
  		 We give $\Hol (K \subseteq M,E)$ the (a priori not necessarily Hausdorff)
  		 inductive limit topology of the above inductive system. 
  		 In Lemma \ref{lem_space_of_germs_is_Hausdorff} we will see that this topology is indeed Hausdorff. 
  \item [(c)]
  		 Let again, $\K=\C$ and fix a fundamental sequence $U_1\supseteq U_2 \supseteq \cdots$ of $K$ in $M$.
  		 By \ref{setup: fund:sq} (c) and the Identity Theorem for analytic mappings the bonding maps $\res^{U_n}_{U_m}$ are injective for $m \geq n$ and so are the limit maps.
  		 Now \ref{setup: fund:sq} (b) implies that the direct limit topology on $\Hol (K \subseteq M, E)$ discussed in part (b)
  		 equals the direct limit topology of the sequence
  		 $(\Hol (U_n,E), \res_{U_{n+1}}^{U_n})_{n\in \N}$.
  		 
	\textbf{For $E$ finite dimensional,} the topology on $\Hol (K \subseteq M,E)$ is nicer.
  		By \ref{setup: fund:sq} (a) The bonding maps of the inductive limit factor in the obvious way through 
    	 \begin{displaymath}
     			\Hol (U_n, E) \rightarrow \BHol (U_{n+1} , E) \rightarrow \Hol (U_{n+1},E).
    	 \end{displaymath}
  		 We conclude that $\Hol (K \subseteq M, E) = \displaystyle\lim_{\rightarrow} \BHol (U_n,E)$.
 
  		 The topology on $\Hol (K \subseteq M, \C)$ coincides with the inductive topology induced by the system 
       \[
         \BHol (U_n, \C) \rightarrow \Hol (K \subseteq M, \C), g \mapsto \res^{U_n}_K (g) \text{ for }n \in \N.
       \]
  		 In \cite[Theorem 3.4]{KM90} it was proved that the bonding maps of this system are compact,
  		 whence $\Hol (K \subseteq M, \C)$ becomes a Silva space\footnote{Recall that a Silva space is defined as the inductive limit of a sequence of Banach spaces such that the bonding maps are compact.}. 
		 For $k \in \N$ recall from \cite[Lemma 3.4]{hg2002} that $\Hol (K\subseteq M ,\C^k) \cong \Hol (K \subseteq M ,\C)^k$ as topological vector spaces.
  		 Thus $\Hol (K \subseteq M, \C^k)$ is a Silva space for all $k \in \N$ as a finite locally convex sums of Silva spaces. 
 \end{itemize}
\end{setup}

\begin{lem}[$\Hol(K\subseteq M,E)$ is Hausdorff]															\label{lem_space_of_germs_is_Hausdorff}
 Let $K$ be a compact subset of a finite dimensional $C^\omega_\C$ manifold $M$ and let $E$ be a complex locally convex vector space. 
 Then $\Hol(K\subseteq M,E)$ with the topology of \ref{setup_space_of_germs}(b) is Hausdorff.
\end{lem}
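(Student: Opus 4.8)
The plan is to exhibit a continuous linear injection from $\Hol(K \subseteq M, E)$ into a Hausdorff locally convex space; since a continuous injection into a Hausdorff space has Hausdorff domain, this suffices. The natural target is built from the \emph{scalar-valued} germ spaces $\Hol(K \subseteq M, \C)$, which are already known to be Hausdorff: by \ref{setup_space_of_germs}(c) each of them is a Silva space. The idea is thus to reduce the vector-valued situation to the scalar one by post-composing germs with continuous linear functionals on $E$.

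First I would fix a fundamental sequence $U_1 \supseteq U_2 \supseteq \cdots$ of $K$ in $M$, so that by \ref{setup_space_of_germs}(c) one has $\Hol(K \subseteq M, E) = \lim_{\to} \Hol(U_n, E)$ with injective bonding and limit maps. For each continuous linear functional $\lambda \in E'$, post-composition $f \mapsto \lambda \circ f$ defines continuous linear maps $\Hol(U_n, E) \to \Hol(U_n, \C)$ (continuity being clear for the compact open topology), and these are compatible with the restriction bonding maps. By the universal property of the locally convex inductive limit they induce a continuous linear map
\[
 \lambda_* \colon \Hol(K \subseteq M, E) \rightarrow \Hol(K \subseteq M, \C), \quad \res^{U_n}_K(f) \mapsto \res^{U_n}_K(\lambda \circ f).
\]

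The main point is to verify that the family $(\lambda_*)_{\lambda \in E'}$ separates the points of $\Hol(K \subseteq M, E)$. So let $g$ be a nonzero germ, represented by some $f \in \Hol(U_n, E)$ with $f \not\equiv 0$ on $U_n$; shrinking $n$ I may assume every connected component of $U_n$ meets $K$ (property (c) of a fundamental sequence). Pick $x_0 \in U_n$ with $f(x_0) \neq 0$ and, using Hahn--Banach (valid since $E$ is Hausdorff, so $E'$ separates points), choose $\lambda \in E'$ with $\lambda(f(x_0)) \neq 0$. Then $\lambda \circ f$ is holomorphic on $U_n$ and nonzero at $x_0$, hence does not vanish identically on the connected component of $U_n$ containing $x_0$. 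Since that component meets $K$, the Identity Theorem forbids $\lambda \circ f$ from vanishing on any open subset of it, in particular on any neighbourhood of $K$; thus $\lambda_*(g) = \res^{U_n}_K(\lambda \circ f) \neq 0$.

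Finally I would assemble the separating family into the diagonal map
\[
 \Lambda \colon \Hol(K \subseteq M, E) \rightarrow \prod_{\lambda \in E'} \Hol(K \subseteq M, \C), \quad g \mapsto (\lambda_*(g))_{\lambda \in E'},
\]
which is continuous (each component is) and injective (by the separation just established). As a product of Silva spaces the target is Hausdorff, so $\Hol(K \subseteq M, E)$ is Hausdorff. The only genuinely delicate step is the point separation, where one must combine Hahn--Banach with the Identity Theorem and property (c) of the fundamental sequence to pass from ``$\lambda \circ f$ is not the zero function on $U_n$'' to ``$\lambda \circ f$ is a nonzero germ along $K$''; everything else is formal.
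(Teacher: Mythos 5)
Your argument is correct, but it takes a genuinely different route from the paper. The paper's proof maps $\Hol(K\subseteq M,E)$ directly into the Hausdorff product $\prod_{a\in K}\prod_{\phi\in\V_a}\prod_{\alpha\in\N_0^d}E$ by sending a germ to the family of all partial derivatives $\partial^\alpha(\gamma\circ\phi^{-1})(\phi(a))$ at points of $K$; continuity of these ``Taylor coefficient'' functionals comes from Lemma \ref{lem: cocinfty} (Cauchy estimates), and injectivity from the Identity Theorem. You instead reduce the vector-valued case to the scalar one by post-composing with functionals $\lambda\in E'$, using Hahn--Banach plus the Identity Theorem and property (c) of the fundamental sequence for point separation, and then rely on $\Hol(K\subseteq M,\C)$ being Hausdorff. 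Your separation argument is sound and is essentially the same use of the Identity Theorem that \ref{setup_space_of_germs}(c) makes to get injectivity of the limit maps. The one point that deserves care is the base case: the paper's footnote defines a Silva space merely as an inductive limit of Banach spaces with compact bonding maps and explicitly defers the Hausdorff property of germ spaces to this very lemma, so quoting \ref{setup_space_of_germs}(c) for Hausdorffness of $\Hol(K\subseteq M,\C)$ risks a circularity unless you additionally invoke the classical theorem that a countable inductive limit of Banach spaces with injective compact bonding maps is Hausdorff (indeed complete and regular); with that citation, or by running the paper's Taylor-coefficient argument once in the scalar case, your proof closes. What each approach buys: the paper's is self-contained and avoids both Hahn--Banach and the (DFS) structure theory, while yours is more modular and makes transparent that the vector-valued germ space injects continuously into a product of scalar germ spaces, which is a useful fact in its own right.
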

\begin{proof}
  For each $a\in M$, let $\V_a$ denote the set of all charts around $a$, i.e.\ the set $\V_a$ consists of all $C^\infty_\C$-diffeomorphisms $\smfunc{\phi}{U_\phi}{V_\phi}$ with $U_\phi$ open $a$-neighbourhood in $M$ and
  $V_\phi$ open in $\C^d$ with $d = \dim M$.
  Denote by $\nN$ the family of all open $K$-neighborhoods in $M$. For each $W\in\nN$, Lemma \ref{lem: cocinfty} yields a continuous linear map 
  \[
   \func{\Psi_W}{\Hol(W,E)}{\prod_{a\in K}\prod_{\phi\in\V_a} \prod_{\alpha\in\N_0^d} E }{\gamma}{\left( \partial^\alpha( \gamma\circ\phi^{-1} ) (\phi(a) ) \right)  }.
  \]
  By construction, for $W,U \in \nN$ with $W \subseteq U$ we have $\Psi_W \circ \res_W^U = \Psi_U$. 
  Hence on the locally convex limit $\Hol(K\subseteq M,E)$ these maps induces a continuous linear map 
  \[
   \smfunc{\Psi}{\Hol(K\subseteq M,E)}{\prod_{a\in K}\prod_{\phi\in\V_a} \prod_{\alpha\in\N_0^d} E. }
  \]
  This map is injective by the Identity Theorem for complex analytic maps. Hence, we obtain an injective continuous map from $\Hol(K\subseteq M,E)$ into a Hausdorff space, implying the Hausdorff property of the space of germs. 
\end{proof}

We will now study sections of locally convex vector bundles.
Our goal is now to topologise the spaces of germs of analytic sections around compact subsets.

We start with a small lemma about infinite dimensional vector bundles:

 \begin{lem}[Regularity of total spaces of bundles] \label{lem: reg:bun}
  Let $(F,\pi,M)$ be a topological vector bundle, i.e.\ a vector bundle whose typical fibre $E$ is a topological convex space. 
  The total space $F$ is regular as a topological space if and only if the topological space $M$ is regular.
 \end{lem}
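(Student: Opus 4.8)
The plan is to prove both implications of the equivalence, since a vector bundle total space $F$ is regular iff its base $M$ is. I would first observe that $M$ embeds into $F$ as a closed subspace via the zero-section $\zs \colon M \to F$, and that $\pi \colon F \to M$ is a continuous open surjection that is a left inverse to the zero-section. The easy direction is that regularity of $F$ implies regularity of $M$: regularity (as the $T_3$ separation axiom, without assuming Hausdorffness separately) is inherited by subspaces, so since the zero-section is a homeomorphism onto its image $\zs(M) \subseteq F$, the space $M \cong \zs(M)$ is regular as a subspace of the regular space $F$.

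For the converse, assume $M$ is regular and let me show $F$ is regular. Here I would work locally using the bundle trivialisations. The key point is that regularity is a local-to-global property in the following sense: since $\pi$ is continuous and open, and $F$ is covered by trivialising open sets $\pi^{-1}(M_\alpha) \cong M_\alpha \times E$, I would reduce to separating a point $v \in F$ from a closed set $A \subseteq F$ not containing $v$. Pick a trivialisation over an open $M_\alpha \ni \pi(v)$, giving a homeomorphism $\pi^{-1}(M_\alpha) \cong M_\alpha \times E$. Since $M$ is regular and $E$ is a topological vector space (hence regular, being a topological group), the product $M_\alpha \times E$ is regular; thus I can separate $v$ from the closed set $A \cap \pi^{-1}(M_\alpha)$ inside the open trivialising neighbourhood.

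The main obstacle is the standard difficulty of passing from separation inside a single chart to separation in the whole space $F$: the closed set $A$ need not be contained in one trivialising set. To handle this I would shrink the base neighbourhood. Concretely, using regularity of $M$, choose an open $M$-neighbourhood $V$ of $\pi(v)$ with $\overline{V} \subseteq M_\alpha$. Then $\pi^{-1}(\overline{V})$ is a closed neighbourhood-block in $F$ over which the trivialisation is still valid, and outside $\pi^{-1}(V)$ the preimage $\pi^{-1}(M \setminus \overline{V})$ (or rather its complement) gives an open set separating $v$ from the part of $A$ lying over the base away from $\pi(v)$. Combining the separation obtained inside the chart for the part of $A$ over $\overline{V}$ with this base-level separation for the part of $A$ over $M \setminus V$ yields disjoint open neighbourhoods of $v$ and of $A$ in $F$. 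This two-piece construction, carefully arranged so the two open sets fit together, is the technical heart of the argument; the rest is routine point-set topology.
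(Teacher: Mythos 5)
Your proposal is correct, and both directions rest on the same core ideas as the paper's proof: the zero-section embedding for the easy implication, and, for the converse, working in a single trivialisation $\pi^{-1}(M_\alpha)\cong M_\alpha\times E$ (regular as a product of the regular spaces $M_\alpha$ and $E$) together with the crucial step of using regularity of $M$ to shrink the base neighbourhood $V$ of $\pi(v)$ so that $\overline{V}\subseteq M_\alpha$. The only genuine difference is the formulation of regularity you work with: you separate a point from a closed set and must therefore split $A$ into the piece over $\overline{V}$ (handled inside the chart) and the piece over $M\setminus\overline{V}$ (handled by the open set $\pi^{-1}(M\setminus\overline{V})$), then glue the two open sets; the paper instead uses the criterion that every open neighbourhood of a point contains a closed neighbourhood, producing the closed box $\kappa^{-1}(\overline{W}\times\overline{V})$ and verifying via $\pi$ that it is closed in all of $F$, which avoids any decomposition of a global closed set. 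Your gluing does go through (take $U_1=U_1'\cap\pi^{-1}(V)$ and $U_2=U_2'\cup\pi^{-1}(M\setminus\overline{V})$, noting $A\cap\pi^{-1}(\overline{V})$ is closed in $\pi^{-1}(M_\alpha)$), so both arguments are complete; the paper's is marginally shorter, yours is the more familiar textbook pattern.
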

 \begin{proof}
  The space $M$ can be embedded in the total space $F$ via the zero-section. Hence, $M$ is regular if $F$ is regular. To show the converse implication, we assume that $M$ is regular.
  
  Recall from general topology that $F$ is regular if every open neighbourhood of every point contains a closed neighbourhood of that point. 
  To check this criterion fix $a\in F$ together with an open $a$-neighbourhood $\Omega\subseteq F$.
  
  Choose a trivialisation $\kappa \colon \pi^{-1} (M_\kappa) \rightarrow M_\kappa \times E$ of the bundle with $a \in \pi^{-1} (M_\kappa)$.
  The set $\kappa(\Omega \cap \pi^{-1}(M_\kappa))$ is an open neighbourhood of $\kappa(a)$ in the product $M\times E$. 
  Hence, we can find open subsets $W \subseteq M_\kappa$ and $V \subseteq E$, respectively, such that
  \[
   \kappa (a) \in  W \times V \subseteq \kappa(\Omega \cap \pi^{-1}(M_\kappa)).
  \]
  Now $M_\kappa$ is regular as a subspace of the regular space $M$ and $E$ is regular as a topological vector space. 
  Therefore, we may chose $W$ and $V$ so small that $\overline W \times \overline V \subseteq \kappa(\Omega \cap \pi^{-1}(M_\kappa))$.
  We obtain an $a$-neighbourhood 
  $
   A\coloneq \kappa^{-1}(\overline W \times \overline V) \subseteq F
  $
  contained in $\Omega$. 
  It remains to show that $A$ is closed in $F$.
  
  Consider an element $\overline a \in \overline A$ of the closure of $A$. We apply the bundle projection $\pi$ to this element and obtain $\pi(\overline a) \in \pi (\overline A ) \subseteq \overline{\pi(A)} = \overline{ \overline W } =\overline W\subseteq M_\kappa$.
  As $\overline a \in \pi^{-1}(M_\kappa)$ we apply $\kappa$ to achieve 
  $
   \kappa (\overline a ) \in \kappa (\overline A) \subseteq \overline{\kappa(A)} = \overline{ \overline W \times \overline V }= \overline W \times \overline V . 
  $
  This shows that $\overline a \in \kappa^{-1} (\overline W \times \overline V) =A$ which shows that $A$ is closed and concludes the proof.
   \end{proof}

\begin{defn}\label{defn: ngermtop}
 Let $(F,\pi,M)$ be a $C^\omega_\C$-bundle whose typical fibre $E$ is a complex locally convex vector space and $M$ is finite dimensional. 
 \begin{enumerate}
  \item Let $\Gamma_\C^\omega (F)$ be the \emph{space of holomorphic sections} of $(F,\pi,M)$. We topologise $\Gamma_\C^\omega (F)$ with the initial topology with respect to the maps 
    \begin{displaymath}
     \theta_\psi \colon \Gamma_\C^\omega (F) \rightarrow \Hol(M_\psi, E) , X \mapsto \text{pr}_2 \circ \psi \circ X|_{M_\psi} .
    \end{displaymath}
  Here $\psi$ ranges through all bundle trivialisations of $F$. 
  Note that $\Gamma_\C^\omega (F)$ is Hausdorff as the point evaluations are continuous. 
 \end{enumerate}
 Consider a compact subset $K \subseteq M$. 
\begin{enumerate}
  \item[(b)] Let $\nN_K$ be the \emph{set of all open $K$-neighbourhoods}.
  For $U \in \nN_K$ we define the \emph{restricted bundle} $(F|U\coloneq \pi^{-1} (U), \pi|_{F|U}^U,U)$.
  \item[(c)] We denote by $\Gamma^\omega_\C (F|K)$ the \emph{space of germs of sections along $K$}, i.e.\ germs of sections in $\Gamma_\C^\omega(F|U)$ where $U$ ranges $\nN_K$. 
  
  Topologise $\Gamma^\omega_\C (F|K)$ as the the locally convex inductive limit of the cone \\
  $(\Gamma^\omega_\C (F|W) \rightarrow \Gamma^\omega_\C (F|K))_{W \in \nN_K}$ (where the limit maps send a section to its germ). 
 \end{enumerate}
 At this point it is not clear whether $\Gamma^\omega_\C (F|K)$ is Hausdorff.
 We will see in Lemma \ref{lem: topchar} that the spaces $\Gamma^\omega_\C (F|K)$ are also Hausdorff.
\end{defn}

\begin{lem}\label{lem: cotop}
 Let $(F,\pi,M)$ be a $C^\omega_\C$-bundle whose typical fibre $E$ is a complex locally convex space and $M$ is finite dimensional.  
 The topology of $\Gamma^\omega_\C (F)$ (cf.\ Definition \ref{defn: ngermtop}) coincides with the compact open topology. 
 Hence a typical subbasis for the topology is 
 \begin{displaymath}
  \lfloor L,O \rfloor \coloneq \setm{X \in \Gamma^\omega_\C (F)}{X(L)\subseteq O}
 \end{displaymath}
 where $L$ is a compact subset of $M$ and $O$ is an open subset of $F$.
\end{lem}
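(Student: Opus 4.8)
The plan is to prove the two topologies coincide by showing each is finer than the other. Write $\tau_{\mathrm i}$ for the initial topology of Definition~\ref{defn: ngermtop}(a) (induced by the maps $\theta_\psi$) and $\tau_{\mathrm{co}}$ for the compact open topology with subbasis $\lfloor L,O\rfloor$. I regard a holomorphic section as a continuous map $M\to F$, so that $\Gamma^\omega_\C(F)$ inherits a topology from $C(M,F)$; and each target $\Hol(M_\psi,E)$ carries the compact open topology by Lemma~\ref{lem: cocinfty}, with subbasic sets $\setm{g}{g(K)\subseteq V}$ for $K\subseteq M_\psi$ compact and $V\subseteq E$ open.

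For $\tau_{\mathrm i}\subseteq\tau_{\mathrm{co}}$ it suffices to check that every $\theta_\psi$ is $\tau_{\mathrm{co}}$-continuous. Since $\psi$ covers the identity on the base, a section $X$ satisfies $\psi(X(x))=(x,\theta_\psi(X)(x))$, so a short computation gives
\[
 \theta_\psi^{-1}\bigl(\setm{g}{g(K)\subseteq V}\bigr)=\lfloor K,O_V\rfloor,\qquad O_V\coloneq\psi^{-1}(M_\psi\times V),
\]
where $O_V$ is open in $F$ because $\psi$ is a homeomorphism onto $M_\psi\times E$. Thus preimages of subbasic sets are subbasic for $\tau_{\mathrm{co}}$, every $\theta_\psi$ is continuous, and $\tau_{\mathrm i}$ is coarser.

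The substantial direction is $\tau_{\mathrm{co}}\subseteq\tau_{\mathrm i}$, i.e.\ that each $\lfloor L,O\rfloor$ is $\tau_{\mathrm i}$-open. Fixing $X_0\in\lfloor L,O\rfloor$, I would cover the compact set $L$ by finitely many trivialisation domains $M_{\psi_1},\dots,M_{\psi_k}$ and, using that $L$ is compact Hausdorff hence normal, invoke the shrinking lemma to obtain compact $L_i\subseteq M_{\psi_i}$ with $L=\bigcup_i L_i$. Set $g_0^i\coloneq\theta_{\psi_i}(X_0)$ and $W_i\coloneq\psi_i(O\cap\pi^{-1}(M_{\psi_i}))$, an open subset of $M_{\psi_i}\times E$ containing the compact graph $\setm{(x,g_0^i(x))}{x\in L_i}$. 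The crucial local step is a tube lemma: the continuous map $(x,v)\mapsto(x,g_0^i(x)+v)$ carries $L_i\times\{0\}$ into $W_i$, so compactness of $L_i$ yields a $0$-neighbourhood in $E$, which I may take of the form $\setm{v}{p_i(v)<\epsilon_i}$ for a continuous seminorm $p_i$, with $(x,g_0^i(x)+v)\in W_i$ whenever $x\in L_i$ and $p_i(v)<\epsilon_i$. Then $N_i\coloneq\theta_{\psi_i}^{-1}\bigl(\setm{g}{\sup_{x\in L_i}p_i(g(x)-g_0^i(x))<\epsilon_i}\bigr)$ is $\tau_{\mathrm i}$-open (the bracketed set is a basic neighbourhood of $g_0^i$ for the topology of uniform convergence on compacta, which equals the compact open topology), and $N\coloneq\bigcap_{i=1}^k N_i$ is a $\tau_{\mathrm i}$-open neighbourhood of $X_0$. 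For $X\in N$ and $x\in L$, choosing $i$ with $x\in L_i$ gives $(x,\theta_{\psi_i}(X)(x))\in W_i$, hence $X(x)=\psi_i^{-1}(x,\theta_{\psi_i}(X)(x))\in O$; so $N\subseteq\lfloor L,O\rfloor$. Combining both inclusions yields $\tau_{\mathrm i}=\tau_{\mathrm{co}}$.

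I expect the tube-lemma step to be the main obstacle: $O$ is an arbitrary open subset of the total space, in a trivialisation an arbitrary open subset of $M_{\psi_i}\times E$ rather than a product, so one must extract uniform seminorm control along the moving graph of $g_0^i$ from the mere compactness of $L_i$. The reduction to a single continuous seminorm, and the use of the coincidence of the compact open topology with uniform convergence on compacta, are the points that require care.
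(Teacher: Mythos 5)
Your proof is correct and follows essentially the same route as the paper: the easy inclusion by computing $\theta_\psi^{-1}$ of subbasic compact-open sets (the paper likewise identifies $\theta_{\psi}^{-1}(\lfloor K,V\rfloor)=\lfloor K,(\pr_2\circ\psi)^{-1}(V)\rfloor$), and the hard inclusion by covering $L$ with compacta inside trivialisation domains and extracting a product neighbourhood of the compact graph of $X_0$ inside $\psi_i(O)$. The only real difference is in that last step: the paper obtains the product neighbourhood directly (in the spirit of Wallace's theorem) and uses a subbasic set $\lfloor L_i,V_i\rfloor$ in the fibre, whereas you translate the graph to the zero section and control it by a seminorm ball, which additionally invokes the standard coincidence of the compact-open topology with the topology of uniform convergence on compacta -- both are valid, and your explicit tube-lemma argument in fact spells out precisely the point the paper's condition (iii) leaves implicit.
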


\begin{proof}
 We show first that for a compact set $L \subseteq M$ and open subset $O \subseteq F$ the set $\lfloor L,O\rfloor$ is open in $\Gamma^\omega_\C (F)$. 
 To this end, we will prove that $\lfloor L,O\rfloor$ is a neighbourhood for each fixed element $\sigma \in \lfloor L,O\rfloor$.
  Since $L \subseteq M$ is compact and $M$ is finite dimensional, there is a finite family of compact sets $K_\alpha \subseteq M,\ 1\leq \alpha \leq m$ such that: 
  \begin{enumerate}
   \item $L = \bigcup_{\alpha} K_\alpha$,
   \item for each $\alpha$ there is a bundle trivialisation $\psi_\alpha$ with $K_\alpha \subseteq M_{\psi_\alpha}$ and 
   \item the compact set $K_\alpha \times \pr_2 \circ \psi_\alpha \circ \sigma (K_\alpha)$ is contained in $M_{\psi_\alpha} \times \pr_2 \circ \psi_\alpha (O \cap \dom \psi_\alpha)$.
  \end{enumerate}
 Recall that the topology on $\Gamma^\omega_\C (F)$ is initial with respect to maps $\theta_\psi$ where $\psi$ runs through all bundle trivialisations. 
 Moreover, the space $\theta_\psi$ maps into carries the compact open topology. 
 As we are dealing with sections the following identity holds
  \begin{displaymath}
   \Omega_\alpha \coloneq \lfloor K_\alpha , (\pr_2 \circ \psi_\alpha)^{-1} (\pr_2 \circ \psi_\alpha (O \cap \dom \psi_\alpha))\rfloor = \theta_{\psi_\alpha}^{-1} (\lfloor K_\alpha , \pr_2 \circ \psi_\alpha (O\cap \dom \psi_\alpha)\rfloor.
  \end{displaymath}
 In particular we observe that $\sigma$ is contained in each open set $\Omega_\alpha$ for $1\leq \alpha \leq m$.
 Note that by construction the set $\bigcap_{1\leq \alpha \leq m} \Omega_\alpha$ is contained in $\lfloor L, O\rfloor$ which proves that $\lfloor L,O\rfloor$ is a neighbourhood of $\sigma$.
 
 Conversely, fix a section $\tau$ together with an arbitrary $\tau$-neighbourhood $\Omega$ in $\Gamma^\omega_\C (F)$.
 By definition of the initial topology, $\Omega$ contains an open $\tau$-neighbourhood of the form 
  \begin{displaymath}
   \bigcap_{1\leq k \leq n} \theta_{\psi_k}^{-1} (\lfloor L_k, U_k\rfloor), \text{ where } L_k \subseteq M_{\psi_k} \text{ is compact and } U_k \subseteq E \text{ is open}. 
  \end{displaymath}
 As above $\theta_{\psi_k}^{-1} (\lfloor L_k , U_k\rfloor) = \lfloor L_k , (\pr_2 \circ \psi_k)^{-1} (U_k)\rfloor$. Thus the assertion follows.
\end{proof}

\begin{lem}\label{lem: restro}
 Let $(F,\pi,M)$ be a $C^\omega_\C$-bundle whose typical fibre $E$ is a complex locally convex space and $M$ is finite dimensional. 
 Consider a compact subset $K$ of $M$.
  \begin{enumerate}
   \item If there is a trivialisation $\psi \colon F \supseteq \Omega \rightarrow M_\psi \times E$ such that $K \subseteq M_\psi$, 
   then the map $I_\psi \colon \Gamma^\omega_\C (F|K) \rightarrow C^\omega_\C (K\subseteq M_\psi, E) , \gamma \mapsto \pr_2 \circ \psi \circ \gamma$
   is an isomorphism of locally convex spaces.  
   \item If $L \subseteq K$ is another compact subset, then the canonical restriction map\\
   $\res_{L}^K \colon \Gamma^\omega_\C (F|K) \rightarrow \Gamma^\omega_\C (F|L)$ is continuous linear.
   \item Let $K_1 , K_2 \subseteq M$ be compact subsets with $K = K_1 \cup K_2$. Then the map  
	      \begin{displaymath}
	       R \coloneq (\res_{K_1}^K , \res_{K_2}^K) \colon \Gamma^\omega_\C (F|K) \rightarrow \Gamma^\omega_\C (F|K_1) \oplus \Gamma^\omega_\C (F|K_2) 
	      \end{displaymath}
        is a topological embedding. In particular if $K_1 \cap K_2 = \emptyset$ the mapping $R$ is an isomorphism of locally convex spaces.
   \end{enumerate}
 \end{lem}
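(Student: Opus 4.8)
For (a) I would argue by cofinality. Since $K \subseteq M_\psi$ with $M_\psi$ open, the open $K$-neighbourhoods $W$ with $W \subseteq M_\psi$ are cofinal in $\nN_K$ (replace any $W$ by $W \cap M_\psi$), so by \ref{defn: ngermtop} the inductive limit defining $\Gamma^\omega_\C(F|K)$ may be formed over this cofinal subsystem without changing the limit or its topology; the corresponding limit of the spaces $\Hol(W,E)$ is exactly $C^\omega_\C(K\subseteq M_\psi,E)$. For each such $W$ the trivialisation identifies $F|W$ with $W\times E$, and $\theta_\psi|_W \colon \Gamma^\omega_\C(F|W) \to \Hol(W,E)$, $X \mapsto \pr_2\circ\psi\circ X|_W$, is an isomorphism of locally convex spaces: both sides carry the compact open topology (Lemma \ref{lem: cotop} for the trivial bundle, Lemma \ref{lem: cocinfty} on the target), and for a trivial bundle sections and maps correspond bijectively and bicontinuously. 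These isomorphisms commute with the bonding maps, hence induce an isomorphism on the limits, which is $I_\psi$.

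For (b) I would invoke the universal property of the locally convex inductive limit. As $L \subseteq K$, every $W \in \nN_K$ lies in $\nN_L$, and the composite $\Gamma^\omega_\C(F|W) \to \Gamma^\omega_\C(F|K) \xrightarrow{\res_L^K} \Gamma^\omega_\C(F|L)$ equals the limit map $\Gamma^\omega_\C(F|W) \to \Gamma^\omega_\C(F|L)$ (both send a section to its germ along $L$), which is continuous. Since this holds at every step $W$, the universal property forces $\res_L^K$ to be continuous; linearity is immediate.

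For (c) continuity of $R$ follows from (b). Injectivity is a germ argument using the Identity Theorem: if $R(\gamma)=0$, a section $s$ representing $\gamma$ on some $W \in \nN_K$ vanishes on an open $K_1$-neighbourhood and on an open $K_2$-neighbourhood, hence on an open $K$-neighbourhood, so $\gamma=0$. For the topology, I would fix fundamental sequences $(U_n^i)_n$ of $K_i$ (\ref{setup: fund:sq}) and put $W_n \coloneq U_n^1 \cup U_n^2$, a fundamental sequence of $K=K_1\cup K_2$, so that $\Gamma^\omega_\C(F|K) = \varinjlim \Gamma^\omega_\C(F|W_n)$ and $\Gamma^\omega_\C(F|K_i) = \varinjlim \Gamma^\omega_\C(F|U_n^i)$. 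At each step, Lemma \ref{lem: cotop} shows the restriction map $\rho_n \colon \Gamma^\omega_\C(F|W_n) \to \Gamma^\omega_\C(F|U_n^1) \oplus \Gamma^\omega_\C(F|U_n^2)$ is a topological embedding onto the closed subspace of pairs agreeing on $U_n^1\cap U_n^2$: any compact $L \subseteq W_n$ splits (by local compactness) as $L_1\cup L_2$ with $L_i\subseteq U_n^i$ compact, so each subbasic $\lfloor L,O\rfloor$ is an intersection of pullbacks of subbasic sets under the two restrictions, i.e.\ the compact open topology on $\Gamma^\omega_\C(F|W_n)$ is initial with respect to these restrictions. In the disjoint case $K_1\cap K_2=\emptyset$ one may choose $U_n^1\cap U_n^2=\emptyset$, whence $\rho_n$ is an isomorphism onto the full direct sum; since a countable locally convex inductive limit commutes with finite direct sums, $R$ is then an isomorphism.

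The main obstacle is transferring the step-wise embeddings $\rho_n$ to an embedding of the inductive limits in the non-disjoint case, since inductive limits need not commute with passage to closed subspaces: the neighbourhood data witnessing each $\rho_n$ must be assembled into a single neighbourhood of the limit. I expect to resolve this through the regularity of the limits involved; in particular for finite-dimensional fibre $E$ the spaces $\Gamma^\omega_\C(F|K)$ are Silva spaces (\ref{setup_space_of_germs}(c)), whose defining limits are regular, so that a thread of step-neighbourhoods realising the inclusion at each level assembles to a neighbourhood showing that the subspace topology on $\im R$ agrees with the given one. Note that the customary partition-of-unity gluing is unavailable in the analytic category, which is precisely why the argument has to run through the inductive-limit structure rather than through a continuous linear retraction onto $\im R$.
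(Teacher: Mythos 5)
Parts (a) and (b) are correct and essentially the paper's own (terser) inductive-limit arguments spelled out. The problem is in (c), the only substantial part: you reduce everything to ``transferring the step-wise embeddings $\rho_n$ to an embedding of the inductive limits'' and then leave exactly that step open, saying you \emph{expect} to resolve it via regularity of the limits. This is a genuine gap, for two reasons. First, regularity of a Silva limit is a statement about bounded sets (every bounded set is contained and bounded in some step); it gives no control over the open sets of the limit topology, which is what an embedding assertion requires, and a morphism of countable inductive systems that is a topological embedding at every step need not induce a topological embedding of the limits. Second, your fallback only applies when the fibre $E$ is finite dimensional, whereas the lemma is stated, and used in Theorem \ref{thm: ram:mfd}, for an arbitrary complex locally convex fibre.

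The paper closes precisely this gap by a direct computation rather than an abstract limit argument: by \cite[p.109]{wengenroth} a basic zero-neighbourhood of $\Gamma^\omega_\C(F|K)=\lim_{\to}\Gamma^\omega_\C(F|U_j)$ may be taken of the form $\Omega=\bigcup_{n\in\N}\sum_{1\leq j\leq n}\Omega_j$ with $\Omega_j$ a zero-neighbourhood in the step $\Gamma^\omega_\C(F|U_j)$; by Lemma \ref{lem: cotop} one may further take $\Omega_j=\lfloor L_j,O_j\rfloor$. One then checks by hand that $R(\Omega)$ equals the trace on $\im R$ of the open set $\bigcup_{n\in\N}\sum_{1\leq j\leq n}\bigl(\lfloor L_j\cap\overline{U^1_{j+1}},O_j\rfloor\times\lfloor L_j\cap\overline{U^2_{j+1}},O_j\rfloor\bigr)$ of $\Gamma^\omega_\C(F|K_1)\oplus\Gamma^\omega_\C(F|K_2)$. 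This is the concrete ``assembly of step-neighbourhoods'' you were hoping for, it works for arbitrary locally convex $E$, and it replaces your unproved appeal to regularity. Until you supply an argument of this kind, your proof of (c) in the non-disjoint case is incomplete; the rest (continuity from (b), injectivity via the Identity Theorem, the disjoint case) is fine.
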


 \begin{proof}
  \begin{enumerate}
   \item Let $W \subseteq M_\psi$ be an open neighbourhood of $K$. 
    From Lemma \ref{lem: cotop} we deduce that the linear bijective map $(\pr_2 \circ \psi)_* \colon \Gamma^\omega_\C (F|W) \rightarrow \Hol (W, E), \sigma \mapsto \pr_2 \circ \psi \circ \sigma$ is an isomorphism of topological vector spaces.
    Now the assertion follows from an easy inductive limit argument.
   \item Since the restriction map $\res^W_U \colon \Hol (W,E) \rightarrow \Hol (U,E)$ is continuous linear for all open $U \subseteq W$, the assertion follows from an inductive limit argument.
   \item By (b) $R$ is continuous linear. Clearly $R$ is also injective.
   If $K_1$ and $K_2$ are disjoint, the assertion is trivial.
   
   Thus we will now assume that $K_1 \cap K_2 \neq \emptyset$.
   Fix fundamental sequences of neighbourhoods $(U_n^i)_{n \in \N}$ of $K_i$ for $i \in \{1,2\}$.
   By construction the sets $U_n \coloneq U_n^1 \cup U_n^2$ form a fundamental sequence for $K$.
  
   It remains to show that the map $R$ is a topological embedding.
   To this end let $\Omega$ be an open zero-neighbourhood in $\Gamma^\omega_\C (F|K)$. It remains to show that $R(\Omega)$ is an open zero-neighbourhood in $R(\Gamma^\omega_\C (F|K))$.
   By \cite[p.109]{wengenroth} we may assume that $\Omega$ is a zero-neighbourhood of the form
   \begin{displaymath}
    \Omega = \bigcup_{n \in \N} \sum_{1\leq j \leq n} \Omega_j \text{ with } \Omega_j \subseteq \Gamma^\omega_\C (F|U_j) \text{ open zero-neighbourhood}
   \end{displaymath}
   By Lemma \ref{lem: cotop} we may assume that $\Omega_j = \lfloor L_j , O_j\rfloor$ with $L_j \subseteq U_j$ compact and $O_j \subseteq F$ open.
   Hence every $\Omega_j$ gives rise to two open sets $\lfloor L_j \cap \overline{U_{j+1}^i}, O_j\rfloor \subseteq \Gamma^\omega_\C (F|U_j^i)$ for $i \in \{1.2\}$.
   As we deal with sections one easily obtains the equality 
   \begin{displaymath}
    R(\Omega) = \bigcup_{n\in \N} \sum_{1\leq j \leq n} \left(\lfloor L_j \cap \overline{U_{j+1}^1}, O_j\rfloor \times \lfloor L_j \cap \overline{U_{j+1}^2}, O_j\rfloor\right) \cap R(\Gamma^\omega_\C (F|K))
   \end{displaymath}
   As $\Gamma^\omega_\C (F|K_1) \oplus \Gamma^\omega_\C (F|K_1)$ is the inductive limit of the system $\Gamma^\omega_\C (F|U_n^1) \oplus \Gamma^\omega_\C (F|U_n^2)$, we see that $R(\Omega)$ is open in the subspace topology.
   Summing up $R$ is a topological embedding.   \qedhere
  \end{enumerate}
\end{proof}

\begin{lem}\label{lem: topchar}
 Let $(F,\pi,M)$ be a $C^\omega_\C$-bundle whose typical fibre $E$ is a complex locally convex space and $M$ is finite dimensional.
 Fix a compact subset $K \subseteq M$ and a finite family of compact subsets $(K_\alpha)_{\alpha \in A}$ of $M$ such that 
  \begin{enumerate}
   \item $K = \bigcup_\alpha K_\alpha$
   \item For each $\alpha$ there is a bundle trivialisation $\psi_\alpha$ with $K_\alpha \subseteq M_{\psi_\alpha}$. 
  \end{enumerate}
 With the notation of Lemma \ref{lem: restro} we obtain a mapping  
  \begin{displaymath}
   \Theta \coloneq (I_{\psi_\alpha} \circ \res^K_{K_\alpha})_{\alpha \in A} \colon \Gamma^\omega_\C(F|K) \rightarrow \bigoplus_{\alpha \in A} \Hol (K_\alpha \subseteq M_{\psi_\alpha} , E) 
  \end{displaymath}
 is a linear topological embedding, whose image is a closed vector subspace.
 Thus $\Gamma^\omega_\C (F|K)$ is Hausdorff. If $E$ is finite dimensional, the space $\Gamma^\omega_\C(F|K)$ is a Silva space.
\end{lem}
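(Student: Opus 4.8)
The plan is to realise $\Theta$ as a composition of maps already understood in Lemma \ref{lem: restro} and then to identify its image through a gluing (compatibility) condition. First I would note that each component $I_{\psi_\alpha} \circ \res^K_{K_\alpha}$ is continuous linear by Lemma \ref{lem: restro} (a) and (b), so $\Theta$ is continuous linear. Injectivity is immediate: a germ along $K = \bigcup_\alpha K_\alpha$ whose restriction to every $K_\alpha$ vanishes is represented by a section vanishing on a neighbourhood of each $K_\alpha$, hence on a neighbourhood of $K$, so it is the zero germ.

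For the embedding property I would first treat the intrinsic ``untwisted'' map $R_A \coloneq (\res^K_{K_\alpha})_{\alpha \in A} \colon \Gamma^\omega_\C(F|K) \to \bigoplus_{\alpha} \Gamma^\omega_\C(F|K_\alpha)$ and show it is a topological embedding by induction on $\abs{A}$. The case $\abs{A}=1$ is trivial (then $\res^K_K = \id$); for the inductive step I would write $K = K_1 \cup L$ with $L \coloneq \bigcup_{\alpha \geq 2} K_\alpha$, apply Lemma \ref{lem: restro} (c) to the pair $(K_1,L)$ and the inductive hypothesis to $L$, and compose the two embeddings (a finite direct sum of topological embeddings is a topological embedding), using $\res^L_{K_\alpha} \circ \res^K_L = \res^K_{K_\alpha}$. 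Post-composing $R_A$ with the topological isomorphism $\bigoplus_\alpha I_{\psi_\alpha}$ (Lemma \ref{lem: restro} (a)) then yields that $\Theta$ is a topological embedding. Since each $\Hol(K_\alpha \subseteq M_{\psi_\alpha}, E)$ is Hausdorff by Lemma \ref{lem_space_of_germs_is_Hausdorff} and finite direct sums preserve this, the target is Hausdorff, and hence so is $\Gamma^\omega_\C(F|K)$.

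It then remains to identify the image as a closed subspace, and I would work at the intrinsic bundle level where no transition functions are needed. The claim is that a tuple lies in $\im R_A$ exactly when the section-germs agree on the overlaps, i.e.
\[
 \im R_A = \bigcap_{\alpha,\beta} \ker\big(\res^{K_\alpha}_{K_\alpha \cap K_\beta}\circ \pr_\alpha - \res^{K_\beta}_{K_\alpha \cap K_\beta}\circ \pr_\beta\big),
\]
where $\pr_\gamma$ denotes the projection onto $\Gamma^\omega_\C(F|K_\gamma)$. Each overlap satisfies $K_\alpha \cap K_\beta \subseteq K_\alpha \subseteq M_{\psi_\alpha}$, so it lies in a single trivialising domain and Lemma \ref{lem: restro} (a) gives $\Gamma^\omega_\C(F|K_\alpha\cap K_\beta) \cong \Hol(K_\alpha\cap K_\beta \subseteq M_{\psi_\alpha}, E)$, which is Hausdorff by Lemma \ref{lem_space_of_germs_is_Hausdorff}; the difference maps are continuous linear into this Hausdorff space by Lemma \ref{lem: restro} (b), so their common kernel is closed, and $\im \Theta = (\bigoplus_\alpha I_{\psi_\alpha})(\im R_A)$ is closed under the topological isomorphism. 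The inclusion ``$\subseteq$'' is immediate from transitivity of restriction. The reverse inclusion is the sheaf gluing property for analytic sections, which I expect to be the main obstacle: given representatives $s_\alpha$ of the $\sigma_\alpha$ on open neighbourhoods $W_\alpha$ of $K_\alpha$ agreeing as germs along each overlap, one must shrink the $W_\alpha$ by a finite compactness (Wallace-type) argument, as in Step~2 of the proof of Theorem \ref{thm: ram:mfd}, so that $W_\alpha \cap W_\beta$ is contained in the set on which $s_\alpha$ and $s_\beta$ genuinely coincide; the $s_\alpha$ then glue to a bona fide analytic section on a neighbourhood of $K$ whose germ maps to $(\sigma_\alpha)_\alpha$.

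Finally, if $E$ is finite dimensional each $\Hol(K_\alpha \subseteq M_{\psi_\alpha}, E)$ is a Silva space by \ref{setup_space_of_germs} (c), so their finite direct sum is a Silva space; as $\Gamma^\omega_\C(F|K)$ is topologically isomorphic to a closed subspace thereof, it is a Silva space by \cite[Corollary 8.6.9]{barr87}.
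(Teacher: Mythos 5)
Your proposal is correct and follows essentially the same route as the paper's proof: iterated application of Lemma \ref{lem: restro}~(c) to embed via $R_A$, composition with the isomorphisms $I_{\psi_\alpha}$ from part~(a), identification of the image as the set of tuples compatible on overlaps (closed since the overlap spaces are Hausdorff), and the closed-subspace argument for the Silva property. You merely spell out more carefully the gluing direction of the image identification, which the paper dismisses as ``easy to see''.
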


\begin{proof} Iteratively applying Lemma \ref{lem: restro} (c) we obtain a linear topological embedding 
  \begin{displaymath}\
   R_A = (\res^K_{K_\alpha})_{\alpha \in A} \colon \Gamma^\omega_\C (F|K) \rightarrow \bigoplus_{\alpha \in A} \Gamma^\omega_\C (F|K_\alpha).
  \end{displaymath}
 Apply Lemma \ref{lem: restro} (a) to each summand to see that $\Theta$ is a topological embedding.
 Now $\Gamma^\omega_\C (F|K)$ embeds into a product of Hausdorff spaces (cf.\ Lemma \ref{lem_space_of_germs_is_Hausdorff}) and thus $\Gamma^\omega_\C (F|K)$ is Hausdorff.
 Moreover, the image $\im \Theta$ is homeomorphic to $\im R_A \subseteq \bigoplus_{\alpha \in A} \Gamma^\omega_\C (F|K_\alpha)$.
 
  It is easy to see that the image of $R_A$ is the space 
  \begin{displaymath}
   \setm{(\gamma_\alpha)_{\alpha \in A} \in \bigoplus_{\alpha \in A} \Gamma^\omega_\C (F|K_\alpha)}{\res^{K_\alpha}_{K_\alpha \cap K_\beta} (\gamma_\alpha) =  \res^{K_\beta}_{K_\alpha \cap K_\beta} (\gamma_\beta), \text{ if } K_\alpha \cap K_\beta \neq \emptyset  }.
  \end{displaymath}
 As each $\Gamma^\omega_\C (F|K_\alpha \cap K_\beta)$ is Hausdorff the image of $R_A$ is obviously closed.
 
 Let $E$ now be finite dimensional. Then each $\Hol (K_\alpha \subseteq M_{\psi_\alpha} ,E)$ is a Silva space by \ref{setup_space_of_germs}(c).
 Since any finite locally convex sum of Silva spaces and closed subspaces of Silva spaces are Silva spaces by \cite[Corollary 8.6.9]{barr87}, the assertion follows. 
 \end{proof}

\phantomsection
\addcontentsline{toc}{section}{References}
\bibliographystyle{new}
\bibliography{analytic}
\end{appendix}
\end{document}